\newtheorem{theorem}{Theorem}
\newtheorem{definition}[theorem]{Definition}
\newtheorem{lemma}[theorem]{Lemma}
\newtheorem{notation}[theorem]{Notation}
\newtheorem{remark}[theorem]{Remark}
\newenvironment{proof}[1][Proof]{\noindent\textbf{#1.} }{\ \rule{0.5em}{0.5em}}
\begin{document}

\title{Part 2. Infinite series and logarithmic integrals associated to
differentiation with respect to parameters of the Whittaker $\mathrm{W}%
_{\kappa ,\mu }\left( x\right) $ function.}
\author{Alexander Apelblat$^{1}$, Juan Luis Gonz\'{a}lez-Santander$^{2}$. \\
%EndAName
$^{1}$ Department of Chemical Engineering, \\
Ben Gurion University of the
Negev, \\
84105 Beer Sheva, 84105, Israel. apelblat@bgu.ac.il\\
$^{2}$ Department of Mathematics, Universidad de Oviedo, \\
33007 Oviedo, Spain. gonzalezmarjuan@uniovi.es}
\maketitle

\begin{abstract}
First derivatives with respect to the parameters of the Whittaker function $%
\mathrm{W}_{\kappa ,\mu }\left( x\right) $ are calculated. Using the
confluent hypergeometric function, these derivarives can be expressed as
infinite sums of quotients of the digamma and gamma functions. Also, it is
possible to obtain these parameter derivatives in terms of infinite
integrals with integrands containing elementary functions (products of
algebraic, exponential and logarithmic functions) from the integral
representation of $\mathrm{W}_{\kappa ,\mu }\left( x\right) $. These
infinite sums and integrals can be expressed in closed-form for particular
values of the parameters. Finally, an integral representation of the
integral Whittaker function $\mathrm{wi}_{\kappa ,\mu }\left( x\right) $ and
its derivative with respect to $\kappa $, as well as some reduction formulas
for the integral Whittaker functions $\mathrm{Wi}_{\kappa ,\mu }\left(
x\right) $ and $\mathrm{wi}_{\kappa ,\mu }\left( x\right) $ are calculated.
\end{abstract}

\textbf{Keywords}:\ Derivatives with respect to parameters; Whittaker
functions; integral Whittaker functions; incomplete gamma functions; sums of
infinite series of psi and gamma; infinite integrals involving Bessel
functions.

\textbf{AMS\ Subject Classification}:\ 33B15, 33B20, 33C10, 33C15, 33C20,
33C50, 33E20.

\section{Introduction}

Two functions $\mathrm{M}_{\kappa ,\mu }\left( x\right) $ and $\mathrm{W}%
_{\kappa ,\mu }\left( x\right) $ were introduced to the mathematical
literature by Whittaker \cite{whittaker1903expression} in 1903, and they are
linearly independent solutions of the following second order differential
equation:%
\begin{eqnarray*}
&&\frac{d^{2}y}{dx^{2}}+\left( \frac{\frac{1}{4}-\mu }{x^{2}}+\frac{\kappa }{%
x}-\frac{1}{4}\right) y=0, \\
&&y\left( x\right) =C_{1}\,\mathrm{M}_{\kappa ,\mu }\left( x\right) +C_{2}\,%
\mathrm{W}_{\kappa ,\mu }\left( x\right) , \\
&&2\mu \neq -1,-2,\ldots
\end{eqnarray*}%
where $\kappa $ and $\mu $ are parameters. For particular values of these
parameters, the Whittaker functions $\mathrm{M}_{\kappa ,\mu }\left(
x\right) $ and $\mathrm{W}_{\kappa ,\mu }\left( x\right) $ can be reduced to
a variety of elementary and special functions (such as modified Bessel
functions, incomplete gamma functions, parabolic cylinder functions, error
functions, logarithmic and cosine integrals, as well as the generalized
Hermite and Laguerre polynomials). Recently, Mainardi et al. \cite%
{mainardi2022wright}\ investigated the special case wherein the Wright
function can be expressed in terms of Whittaker functions.

The Whittaker functions can be expressed as \cite[Eqn. 13.14.2]%
{olver2010nist}:\ \
\begin{eqnarray}
\mathrm{M}_{\kappa ,\mu }\left( z\right) &=&z^{\mu
+1/2}e^{-z/2}\,_{1}F_{1}\left( \left.
\begin{array}{c}
\frac{1}{2}+\mu -\kappa \\
1+2\mu%
\end{array}%
\right\vert z\right)  \label{M_k,mu_def} \\
2\mu &\neq &-1,-2,\ldots  \notag
\end{eqnarray}%
and \cite[Eqn. 13.14.33]{olver2010nist}:%
\begin{eqnarray}
\mathrm{W}_{\kappa ,\mu }\left( z\right) &=&\frac{\Gamma \left( -2\mu
\right) }{\Gamma \left( \frac{1}{2}-\mu -\kappa \right) }\mathrm{M}_{\kappa
,\mu }\left( z\right) +\frac{\Gamma \left( 2\mu \right) }{\Gamma \left(
\frac{1}{2}+\mu -\kappa \right) }\mathrm{M}_{\kappa ,-\mu }\left( z\right) ,
\label{W_k,mu_def} \\
2\mu &\notin &%
%TCIMACRO{\U{2124} }%
%BeginExpansion
\mathbb{Z}
%EndExpansion
,  \notag
\end{eqnarray}%
where $\Gamma \left( x\right) $ denotes the \textit{gamma function,} and the
\textit{Kummer function} is defined as \cite[Eqn. 47:3:1]{oldham2009atlas}:%
\begin{equation}
_{1}F_{1}\left( \left.
\begin{array}{c}
a \\
b%
\end{array}%
\right\vert z\right) =\sum_{n=0}^{\infty }\frac{\left( a\right) _{n}}{\left(
b\right) _{n}}\frac{z^{n}}{n!},  \label{1F1_Whittaker_def}
\end{equation}%
where $\left( \alpha \right) _{n}=$ $\frac{\Gamma \left( \alpha +n\right) }{%
\Gamma \left( \alpha \right) }$ denotes the \textit{Pochhammer polynomial}
and
\begin{equation}
_{p}F_{q}\left( \left.
\begin{array}{c}
a_{1},\ldots ,a_{p} \\
b_{1},\ldots ,b_{q}%
\end{array}%
\right\vert x\right) =\sum_{n=0}^{\infty }\frac{\left( a_{1}\right)
_{n}\cdots \left( a_{p}\right) _{n}}{\left( b_{1}\right) _{n}\cdots \left(
b_{q}\right) _{n}}\frac{x^{n}}{n!},  \label{pFq_def}
\end{equation}%
is the \textit{generalized hypergeometric function}.

Also, the Whittaker function $\mathrm{W}_{\kappa ,\mu }\left( x\right) $ can
be expressed as \cite[Eqn. 13.14.3]{olver2010nist}:\
\begin{equation}
\mathrm{W}_{\kappa ,\mu }\left( z\right) =e^{-z/2}z^{\mu +1/2}\,\mathrm{U}%
\left( \frac{1}{2}+\mu -\kappa ,1+2\mu ,z\right) ,  \label{W_def_Tricomi}
\end{equation}%
where $\mathrm{U}\left( a,b,z\right) $ denotes the \textit{Tricomi function}.

Analytical properties of the Whittaker functions (see \cite%
{erdelyi1953bateman,slater1960confluent,whittaker1920course,olver2010nist,magnus2013formulas,buchholz1969confluent,gradstein2015table,prudnikov1986integrals,oldham2009atlas}%
)\ are of great interest in Mathematical Physics because these functions are
involved in many applications, such as the solutions of the wave equation in
paraboloidal coordinates, the behaviour of charges particles in fields with
Coulomb potentials, stationary Green's function in atomic and molecular
calculations in Quantum Mechanics (i.e. solution of Schr\"{o}dinger equation
for the harmonic oscillator), probability density functions, and in many
other physical and engineering problems \cite%
{slater1960confluent,laurenzi1973derivatives,lagarias2009schrodinger,omair2022family}%
. \

Mostly, the Whittaker functions are regarded as a function of variable $x$
with fixed values of parameters $\kappa $ and $\mu $, although there are few
investigations where mathematical operations associated with both parameters
are considered, especially for the $\kappa $ parameter \cite%
{laurenzi1973derivatives,buschman1974finite,abad2003successive,becker2009infinite}%
. In this context, it is worthwhile to mention Laurenzi's paper \cite%
{laurenzi1973derivatives}, where the calculation of the derivative of $%
\mathrm{W}_{\kappa ,1/2}\left( x\right) $ with respect to $\kappa $ when
this parameter is an integrer is derived. In \cite{buschman1974finite},
Buschman showed that the derivative of $\mathrm{W}_{\kappa ,\mu }\left(
x\right) $ with respect to the parameters can be expressed in terms of
finite sums of these $\mathrm{W}_{\kappa ,\mu }\left( x\right) $ functions.
Higher derivatives of the Whittaker functions with respect to parameter $%
\kappa $ were discussed by Abad and Sesma \cite{abad2003successive}, and
integrals with respect to parameter $\mu $ by Becker \cite%
{becker2009infinite}. Since the Whittaker functions are related to the
confluent hypergeometric function, it is worth mention the investigation of
the derivatives of the generalized hypergeometric functions presented by
Ancarini and Gasaneo \cite{ancarani2010derivatives} or Sofostasios and
Brychkov \cite{sofotasios2018derivatives}.

The integral Whittaker functions were introduced by us \cite%
{apelblat2021integral} as follows:
\begin{eqnarray}
\mathrm{Wi}_{\kappa ,\mu }\left( x\right) &=&\int_{0}^{x}\frac{\mathrm{W}%
_{\kappa ,\mu }\left( t\right) }{t}dt,  \label{Wi_def} \\
\mathrm{wi}_{\kappa ,\mu }\left( x\right) &=&\int_{x}^{\infty }\frac{\mathrm{%
W}_{\kappa ,\mu }\left( t\right) }{t}dt.  \label{wi__def}
\end{eqnarray}

In the current paper, the main attention will be devoted to Whittaker
function $\mathrm{W}_{\kappa ,\mu }\left( x\right) $ by analyzing the first
derivative of this function with respect to the parameters from the
corresponding series and integral representations. Direct differentiation of
the Whittaker functions leads to infinite sums of quotients of the digamma
and gamma functions. It is possible to calculate these sums in closed-form
in some cases with the aid of MATHEMATICA\ program. When the integral
representations of the Whittaker function $\mathrm{W}_{\kappa ,\mu }\left(
x\right) $ are taken into account, the results of differentiation can be
expressed in terms of Laplace transforms of elementary functions. Integrands
of the these Laplace type integrals include products of algebraic,
exponential and logarithmic functions. New groups of infinite integrals are
comparable to those investigated by K\"{o}lbig \cite{kolbig1987integral},
Geddes et al. \cite{geddes1990evaluation}, and Apelblat and Kravitzky \cite%
{apelblat1985integral} are calculated in this paper.

Also, we will focus our attention on the integral Whittaker functions $%
\mathrm{Wi}_{\kappa ,\mu }\left( x\right) $ and $\mathrm{wi}_{\kappa ,\mu
}\left( x\right) $ in order to derive some new reduction formulas, as well
as an integral representation of $\mathrm{wi}_{\kappa ,\mu }\left( x\right) $
and its first derivative with respect to parameter $\kappa $.

\section{Parameter differentiation of $\mathrm{W}_{\protect\kappa ,\protect%
\mu }$ via Kummer function $_{1}F_{1}$}

\begin{notation}
Unless indicated otherwise, it is assumed throughout the paper that $x$ is a
real variable and $z$ is a complex variable.
\end{notation}

\begin{definition}
According to the notation introduced by Ancarini and Gasaneo \cite%
{ancarani2010derivatives}, define%
\begin{equation}
G^{\left( 1\right) }\left( \left.
\begin{array}{c}
a \\
b%
\end{array}%
\right\vert x\right) =\frac{\partial }{\partial a}\,\left[ _{1}F_{1}\left(
\left.
\begin{array}{c}
a \\
b%
\end{array}%
\right\vert x\right) \right] ,  \label{G(1)_def}
\end{equation}%
and%
\begin{equation}
H^{\left( 1\right) }\left( \left.
\begin{array}{c}
a \\
b%
\end{array}%
\right\vert x\right) =\frac{\partial }{\partial b}\,\left[ _{1}F_{1}\left(
\left.
\begin{array}{c}
a \\
b%
\end{array}%
\right\vert x\right) \right] .  \label{H(1)_def}
\end{equation}
\end{definition}

\subsection{Derivative with respect to the first parameter $\partial \mathrm{%
W}_{\protect\kappa ,\protect\mu }\left( x\right) /\partial \protect\kappa $}

Taking into account (\ref{M_k,mu_def}) and (\ref{G(1)_def}), direct
differentiation of (\ref{W_k,mu_def}) yields:%
\begin{eqnarray}
&&\frac{\partial \mathrm{W}_{\kappa ,\mu }\left( x\right) }{\partial \kappa }
\label{DkW_(1)} \\
&=&\frac{\Gamma \left( -2\mu \right) }{\Gamma \left( \frac{1}{2}-\mu -\kappa
\right) }\left[ \psi \left( \frac{1}{2}-\mu -\kappa \right) \mathrm{M}%
_{\kappa ,\mu }\left( x\right) -x^{1/2+\mu }e^{-x/2}\,G^{\left( 1\right)
}\left( \left.
\begin{array}{c}
\frac{1}{2}+\mu -\kappa \\
1+2\mu%
\end{array}%
\right\vert x\right) \right]  \notag \\
&&+\frac{\Gamma \left( 2\mu \right) }{\Gamma \left( \frac{1}{2}+\mu -\kappa
\right) }\left[ \psi \left( \frac{1}{2}+\mu -\kappa \right) \mathrm{M}%
_{\kappa ,-\mu }\left( x\right) -x^{1/2-\mu }e^{-x/2}\,G^{\left( 1\right)
}\left( \left.
\begin{array}{c}
\frac{1}{2}-\mu -\kappa \\
1-2\mu%
\end{array}%
\right\vert x\right) \right] .  \notag
\end{eqnarray}

If we apply first Kummer's transformation formula \cite[Eqn. 13.2.39]%
{olver2010nist}:
\begin{equation}
_{1}F_{1}\left( \left.
\begin{array}{c}
a \\
b%
\end{array}%
\right\vert x\right) =e^{x}\,_{1}F_{1}\left( \left.
\begin{array}{c}
b-a \\
b%
\end{array}%
\right\vert -x\right) ,  \label{Kummer_transform}
\end{equation}%
we can rewrite (\ref{DkW_(1)})\ as%
\begin{eqnarray}
&&\frac{\partial \mathrm{W}_{\kappa ,\mu }\left( x\right) }{\partial \kappa }
\label{DkW_(2)} \\
&=&\frac{\Gamma \left( -2\mu \right) }{\Gamma \left( \frac{1}{2}-\mu -\kappa
\right) }\left[ \psi \left( \frac{1}{2}-\mu -\kappa \right) \mathrm{M}%
_{\kappa ,\mu }\left( x\right) +x^{1/2+\mu }e^{x/2}\,G^{\left( 1\right)
}\left( \left.
\begin{array}{c}
\frac{1}{2}+\mu +\kappa \\
1+2\mu%
\end{array}%
\right\vert -x\right) \right]  \notag \\
&&+\frac{\Gamma \left( 2\mu \right) }{\Gamma \left( \frac{1}{2}+\mu -\kappa
\right) }\left[ \psi \left( \frac{1}{2}+\mu -\kappa \right) \mathrm{M}%
_{\kappa ,-\mu }\left( x\right) -x^{1/2-\mu }e^{-x/2}\,G^{\left( 1\right)
}\left( \left.
\begin{array}{c}
\frac{1}{2}-\mu -\kappa \\
1-2\mu%
\end{array}%
\right\vert x\right) \right] .  \notag
\end{eqnarray}

\begin{theorem}
\label{Theorem_1}For $2\mu \notin
%TCIMACRO{\U{2124} }%
%BeginExpansion
\mathbb{Z}
%EndExpansion
$, the following parameter derivative formula of $\mathrm{W}_{\kappa ,\mu
}\left( x\right) $ holds true:%
\begin{eqnarray}
&&\left. \frac{\partial \mathrm{W}_{\kappa ,\pm \mu }\left( x\right) }{%
\partial \kappa }\right\vert _{\kappa =\mu +1/2}=\sqrt{x}e^{-x/2}
\label{DkW_k=mu+1/2} \\
&&\left\{ x^{\mu }\left[ \psi \left( -2\mu \right) -\frac{x}{2\mu +1}%
\,_{2}F_{2}\left( \left.
\begin{array}{c}
1,1 \\
2\mu +2,2%
\end{array}%
\right\vert x\right) \right] +\Gamma \left( 2\mu +1\right) x^{-\mu }\left(
-x\right) ^{2\mu }\gamma \left( -2\mu ,-x\right) \right\} ,  \notag
\end{eqnarray}%
where $\gamma \left( \nu ,z\right) $ denotes the lower incomplete gamma
function (\ref{gamma_def}).
\end{theorem}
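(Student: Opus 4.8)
The plan is to obtain (\ref{DkW_k=mu+1/2}) by specialising the general formula (\ref{DkW_(2)}) to the value $\kappa=\mu+1/2$. For fixed $x>0$ and $2\mu\notin\mathbb{Z}$ the function $\mathrm{W}_{\kappa,\mu}(x)$, and hence $\partial\mathrm{W}_{\kappa,\mu}(x)/\partial\kappa$, is analytic in $\kappa$ near $\mu+1/2$ (the poles of $\Gamma(\tfrac12\pm\mu-\kappa)$ entering (\ref{W_k,mu_def}) are only apparent), while (\ref{DkW_(2)}) as written is valid only for $\kappa$ with $\tfrac12\pm\mu-\kappa$ not a nonpositive integer; so the sought value is the limit $\kappa\to\mu+1/2$ of the right-hand side of (\ref{DkW_(2)}). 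At that value $\tfrac12-\mu-\kappa=-2\mu$ and $\tfrac12+\mu-\kappa=0$, so the first bracket of (\ref{DkW_(2)}) has prefactor $\Gamma(-2\mu)/\Gamma(-2\mu)=1$ and specialises directly, whereas the second bracket presents a $0\cdot\infty$ indeterminacy that must be resolved by hand; resolving that limit cleanly is the only real difficulty.

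For the first bracket, at $\kappa=\mu+1/2$ the Kummer function inside $\mathrm{M}_{\kappa,\mu}(x)$ has vanishing top parameter, so $\mathrm{M}_{\mu+1/2,\mu}(x)=x^{\mu+1/2}e^{-x/2}$; dividing by $\sqrt{x}\,e^{-x/2}$ this produces the term $x^{\mu}\psi(-2\mu)$. The remaining piece is $x^{\mu}e^{x}\,G^{(1)}(1+2\mu;1+2\mu;-x)$, where the top and bottom parameters of $G^{(1)}$ coincide. Using (\ref{1F1_Whittaker_def}) and (\ref{G(1)_def}), the equal-parameter case reduces to $G^{(1)}(b;b;z)=\sum_{n\ge1}[\psi(b+n)-\psi(b)]\,z^{n}/n!$ (the $n=0$ term drops out); writing $\psi(b+n)-\psi(b)=\int_{0}^{1}t^{b-1}(1-t^{n})(1-t)^{-1}\,dt$ with $b=1+2\mu$, interchanging sum and integral, summing the exponential series, multiplying by $e^{x}$ and substituting $u=1-t$ leaves $-\sum_{m\ge1}(x^{m}/m!)\,B(m,2\mu+1)$, which after reindexing and the identity $(1)_{k}/k!=1$ equals $-\tfrac{x}{2\mu+1}\,{}_2F_{2}(1,1;2,2\mu+2;x)$. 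Carrying the factor $x^{\mu}$ along gives the $-\,x^{\mu+1}(2\mu+1)^{-1}\,{}_2F_{2}$ term in braces in (\ref{DkW_k=mu+1/2}).

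For the second bracket, set $w=\tfrac12+\mu-\kappa\to0$. Its $G^{(1)}$ summand carries a factor $\Gamma(2\mu)/\Gamma(w)\to0$, while $G^{(1)}(\tfrac12-\mu-\kappa;1-2\mu;x)\to G^{(1)}(-2\mu;1-2\mu;x)$ stays finite (admissible since $2\mu\notin\mathbb{Z}$), so this summand contributes nothing in the limit. Its $\psi$ summand carries $\Gamma(2\mu)\,\psi(w)/\Gamma(w)$; from $1/\Gamma(w)=w+O(w^{2})$ and $\psi(w)=-1/w+O(1)$ near $w=0$ this tends to $-\Gamma(2\mu)$, so the second bracket contributes $-\Gamma(2\mu)\,\mathrm{M}_{\mu+1/2,-\mu}(x)=-\Gamma(2\mu)\,x^{1/2-\mu}e^{-x/2}\,{}_1F_{1}(-2\mu;1-2\mu;x)$. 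I would then invoke the incomplete-gamma representation $\gamma(\nu,z)=\nu^{-1}z^{\nu}\,{}_1F_{1}(\nu;\nu+1;-z)$ with $\nu=-2\mu$, $z=-x$, together with $\Gamma(2\mu+1)/(2\mu)=\Gamma(2\mu)$, to rewrite this as $\sqrt{x}\,e^{-x/2}\,\Gamma(2\mu+1)\,x^{-\mu}(-x)^{2\mu}\gamma(-2\mu,-x)$, the last term in braces in (\ref{DkW_k=mu+1/2}).

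Collecting the three pieces and factoring out $\sqrt{x}\,e^{-x/2}$ yields (\ref{DkW_k=mu+1/2}) for $\mathrm{W}_{\kappa,\mu}$. For $\mathrm{W}_{\kappa,-\mu}$ nothing further is required: the right-hand side of (\ref{W_k,mu_def}) is invariant under $\mu\mapsto-\mu$, hence $\mathrm{W}_{\kappa,-\mu}(x)=\mathrm{W}_{\kappa,\mu}(x)$ and the two $\kappa$-derivatives at $\kappa=\mu+1/2$ coincide, which accounts for the $\pm$ in the statement. As noted, the delicate point is the simultaneous cancellation of the pole of $1/\Gamma(w)$ against the pole of $\psi(w)$ in the second bracket, plus the verification that the accompanying $G^{(1)}$ term genuinely disappears in the limit; everything else is bookkeeping with Pochhammer symbols, one Beta integral, and the standard $\mathrm{M}$-to-${}_1F_{1}$ and incomplete-gamma identities.
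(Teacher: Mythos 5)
Your argument is correct and follows essentially the same route as the paper: specialise (\ref{DkW_(2)}) at $\kappa=\mu+1/2-\epsilon$, use $\mathrm{M}_{\mu+1/2,\mu}(x)=e^{-x/2}x^{1/2+\mu}$, rewrite $\mathrm{M}_{\mu+1/2,-\mu}(x)$ in terms of the lower incomplete gamma function, reduce the equal-parameter $G^{(1)}$ to a ${}_2F_2$, and resolve the $0\cdot\infty$ indeterminacy in the second bracket via $\lim_{\epsilon\to0}\psi(\epsilon)/\Gamma(\epsilon)=-1$. The only departure is your derivation of the identity $G^{(1)}\left(\left.\begin{array}{c}a\\a\end{array}\right\vert z\right)=\frac{z\,e^{z}}{a}\,{}_2F_2\left(\left.\begin{array}{c}1,1\\a+1,2\end{array}\right\vert -z\right)$ from the digamma series and a Beta integral, whereas the paper obtains it in Appendix A by differentiating a ${}_2F_2$-to-${}_1F_1$ relation and applying L'H\^{o}pital's rule; both are valid.
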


\begin{proof}
First note that
\begin{equation}
\frac{\partial \mathrm{W}_{\kappa ,\mu }\left( x\right) }{\partial \kappa }=%
\frac{\partial \mathrm{W}_{\kappa ,-\mu }\left( x\right) }{\partial \kappa },
\label{DkW_mu=DkW_-mu}
\end{equation}%
since \cite[Eqn. 13.14.31]{olver2010nist}:
\begin{equation}
\mathrm{W}_{\kappa ,\mu }\left( x\right) =\mathrm{W}_{\kappa ,-\mu }\left(
x\right) .  \label{W_mu=W_-mu}
\end{equation}%
Now, let us calculate $\left. \partial \mathrm{W}_{\kappa ,\mu }\left(
x\right) /\partial \kappa \right\vert _{\kappa =\mu +1/2}$. For this
purpose, take $\kappa =\mu +1/2-\epsilon $ in (\ref{DkW_(2)})\ to obtain%
\begin{eqnarray}
&&\left. \frac{\partial \mathrm{W}_{\kappa ,\mu }\left( x\right) }{\partial
\kappa }\right\vert _{\kappa =\mu +1/2-\epsilon }  \label{W_epsilon} \\
&=&\frac{\Gamma \left( -2\mu \right) }{\Gamma \left( -2\mu +\epsilon \right)
}\left[ \psi \left( -2\mu +\epsilon \right) \mathrm{M}_{\mu +1/2-\epsilon
,\mu }\left( x\right) +x^{1/2+\mu }e^{x/2}\,G^{\left( 1\right) }\left(
\left.
\begin{array}{c}
1+2\mu -\epsilon \\
1+2\mu%
\end{array}%
\right\vert -x\right) \right]  \notag \\
&&+\frac{\Gamma \left( 2\mu \right) }{\Gamma \left( \epsilon \right) }\left[
\psi \left( \epsilon \right) \mathrm{M}_{\mu +1/2-\epsilon ,-\mu }\left(
x\right) -x^{1/2-\mu }e^{-x/2}\,G^{\left( 1\right) }\left( \left.
\begin{array}{c}
-2\mu +\epsilon \\
1-2\mu%
\end{array}%
\right\vert x\right) \right] .  \notag
\end{eqnarray}%
Note that according to \cite[Eqn. 13.18.2]{olver2010nist}
\begin{equation}
\mathrm{M}_{\mu +1/2,\mu }\left( x\right) =e^{-x/2}x^{1/2+\mu }.
\label{M_mu+1/2,mu}
\end{equation}%
Also, from (\ref{M_k,mu_def})\ and (\ref{Kummer_transform}), we have%
\begin{eqnarray}
\mathrm{M}_{\mu +1/2,-\mu }\left( x\right) &=&e^{x/2}x^{1/2-\mu
}\,_{1}F_{1}\left( \left.
\begin{array}{c}
1 \\
1+2\mu%
\end{array}%
\right\vert -x\right)  \notag \\
&=&e^{x/2}x^{1/2-\mu }\sum_{n=0}^{\infty }\frac{\left( -x\right) ^{n}}{%
\left( 1-2\mu \right) _{n}}.  \label{M_-mu_(1)}
\end{eqnarray}%
Taking into account \cite[Eqn. 45:6:2]{oldham2009atlas}:
\begin{equation*}
e^{x}\gamma \left( \nu ,x\right) =\frac{x^{\nu }}{\nu }\sum_{n=0}^{\infty }%
\frac{x^{n}}{\left( 1+\nu \right) _{n}},
\end{equation*}%
rewrite (\ref{M_-mu_(1)})\ as%
\begin{equation}
\mathrm{M}_{\mu +1/2,-\mu }\left( x\right) =-2\mu \,e^{-x/2}x^{1/2-\mu
}\left( -x\right) ^{2\mu }\gamma \left( -2\mu ,-x\right) .
\label{M_mu+1/2,-mu}
\end{equation}%
Consider as well the reduction formula given in the Appendix \ref%
{G(1)_(a;a;x)}:%
\begin{equation}
G^{\left( 1\right) }\left( \left.
\begin{array}{c}
a \\
a%
\end{array}%
\right\vert x\right) =\frac{x\,e^{x}}{a}\,_{2}F_{2}\left( \left.
\begin{array}{c}
1,1 \\
a+1,2%
\end{array}%
\right\vert -x\right) .  \label{G1(a;a;x)}
\end{equation}%
Finally, according to the property \cite[Eqn. 44:5:3]{oldham2009atlas}:
\begin{equation*}
\psi \left( z+1\right) =\frac{1}{z}+\psi \left( z\right) ,
\end{equation*}%
see that%
\begin{equation}
\lim_{\epsilon \rightarrow 0}\frac{\psi \left( \epsilon \right) }{\Gamma
\left( \epsilon \right) }=\lim_{\epsilon \rightarrow 0}\frac{1}{\Gamma
\left( \epsilon \right) }\left[ \psi \left( \epsilon +1\right) -\frac{1}{%
\epsilon }\right] =-1.  \label{Limit_Psi/Gamma}
\end{equation}%
Now, take the limit $\epsilon \rightarrow 0$ in (\ref{W_epsilon}),
considering the results given in (\ref{DkW_mu=DkW_-mu}), (\ref{M_mu+1/2,mu}%
), (\ref{M_mu+1/2,-mu}), (\ref{G1(a;a;x)}) and (\ref{Limit_Psi/Gamma}), to
obtain (\ref{DkW_k=mu+1/2}), as we wanted to prove.
\end{proof}

Table \ref{Table_1}\ presents some explicit expressions for particular
values of (\ref{DkW_k=mu+1/2}), obtained with the help of MATHEMATICA\
program.

\begin{center}
%TCIMACRO{\TeXButton{B}{\begin{table}[tbp] \centering}}%
%BeginExpansion
\begin{table}[tbp] \centering%
%EndExpansion
\caption{Derivative of $\mathrm{W}_{\kappa,\mu}$ with respect
to $\kappa$ by using (\ref{DkW_k=mu+1/2}).}%
\rotatebox{90}{
\begin{tabular}{|c|c|c|}
\hline
$\kappa $ & $\mu $ & $\frac{\partial \mathrm{W}_{\kappa ,\mu }\left(
x\right) }{\partial \kappa }$ \\ \hline\hline
$-\frac{3}{4}$ & $\pm \frac{5}{4}$ & $\frac{1}{3}\,x^{-3/4}e^{-x/2}\left[
2x\,_{2}F_{2}\left( 1,1;-\frac{1}{2},2;x\right) +3\pi \,\mathrm{erfi}\left(
\sqrt{x}\right) +2\sqrt{\pi \,x}\,e^{x}\left( 2x-3\right) -3\gamma +8-3\ln 4%
\right] $ \\ \hline
$-\frac{1}{4}$ & $\pm \frac{3}{4}$ & $x^{-1/4}e^{-x/2}\left[
2x\,_{2}F_{2}\left( 1,1;\frac{1}{2},2;x\right) +\pi \,\mathrm{erfi}\left(
\sqrt{x}\right) -2\sqrt{\pi \,x}\,e^{x}-\gamma +2-\ln 4\right] $ \\ \hline
$-\frac{1}{6}$ & $\pm \frac{2}{3}$ & $%
\begin{array}{l}
\frac{1}{6}x^{-5/6}e^{-x/2}\left\{ 3x^{2/3}\left[ 6x\,_{2}F_{2}\left( 1,1;%
\frac{2}{3},2;x\right) -2\gamma +6-3\ln 3\right] \right. \\
\quad \left. -6x^{2}\Gamma \left( -\frac{1}{3}\right) \mathrm{E}%
_{-1/3}\left( -x\right) -\sqrt{3}\pi \left[ x^{2/3}+4\left( -x\right) ^{2/3}%
\right] \right\}%
\end{array}%
$ \\ \hline
$\frac{1}{6}$ & $\pm \frac{1}{3}$ & $%
\begin{array}{l}
\frac{1}{6}x^{-1/6}e^{-x/2}\left\{ -3x^{1/3}\left[ 6x\,_{2}F_{2}\left( 1,1;%
\frac{4}{3},2;x\right) +2\gamma +3\ln 3\right] \right. \\
\quad \left. -6x\,\Gamma \left( \frac{1}{3}\right) \mathrm{E}_{1/3}\left(
-x\right) +\sqrt{3}\pi \left[ x^{1/3}-4\left( -x\right) ^{1/3}\right]
\right\}%
\end{array}%
$ \\ \hline
$\frac{1}{4}$ & $\pm \frac{1}{4}$ & $-x^{1/4}e^{-x/2}\left[
2x\,_{2}F_{2}\left( 1,1;\frac{3}{2},2;x\right) -\pi \,\mathrm{erfi}\left(
\sqrt{x}\right) +\gamma +\ln 4\right] $ \\ \hline
$\frac{3}{4}$ & $\pm \frac{1}{4}$ & $\frac{1}{3}e^{-x/2}\left\{ x^{3/4}\left[
-2x\,_{2}F_{2}\left( 1,1;\frac{5}{2},2;x\right) +3\left( \pi \,\mathrm{erfi}%
\left( \sqrt{x}\right) -\gamma +2-\ln 4\right) \right] -3\sqrt{\pi }%
\,x^{1/4}e^{x}\right\} $ \\ \hline
$\frac{5}{6}$ & $\pm \frac{1}{3}$ & $%
\begin{array}{l}
\frac{1}{30}x^{1/6}e^{-x/2}\left\{ -18\,x^{5/3}\,_{2}F_{2}\left( 1,1;\frac{8%
}{3},2;x\right) +15\,x^{2/3}\left( 3-2\gamma -3\ln 3\right) \right. \\
\quad -\left. 30\,\Gamma \left( \frac{5}{3}\right) \mathrm{E}_{5/3}\left(
-x\right) -5\sqrt{3}\pi \left[ x^{2/3}+4\left( -x\right) ^{1/3}\right]
\right\}%
\end{array}%
$ \\ \hline
$\frac{5}{4}$ & $\pm \frac{3}{4}$ & $\frac{1}{30}x^{-1/4}e^{-x/2}\left\{
-2x^{3/2}\left[ 6x\,_{2}F_{2}\left( 1,1;\frac{7}{2},2;x\right) -5\left( \pi
\,\mathrm{erfi}\left( \sqrt{x}\right) -3\gamma +8-3\ln 4\right) \right] -15%
\sqrt{\pi }\,e^{x}\left( 2x+1\right) \right\} $ \\ \hline
\end{tabular}%
}
\label{Table_1}%
%TCIMACRO{\TeXButton{E}{\end{table}}}%
%BeginExpansion
\end{table}%
%EndExpansion
\end{center}

Next, we present other reduction formula of $\partial \mathrm{W}_{\kappa
,\mu }\left( x\right) /\partial \kappa $ from the result found in \cite%
{laurenzi1973derivatives}.

\begin{theorem}
The following reduction formula holds true for $n=1,2,\ldots $%
\begin{eqnarray}
&&\left. \frac{\partial \mathrm{W}_{\kappa ,\pm 1/2}\left( x\right) }{%
\partial \kappa }\right\vert _{\kappa =n}  \label{DkW_n,1/2} \\
&=&\left( -1\right) ^{n}\left( n-1\right) !e^{-x/2}\left[ \sum_{\ell
=0}^{n-1}\frac{n-\ell }{n+\ell }\,L_{\ell }^{\left( -1\right) }\left(
x\right) +n\,L_{\ell }^{\left( -1\right) }\left( x\right) \ln x\,\right] ,
\notag
\end{eqnarray}%
where $L_{n}^{\left( \alpha \right) }\left( x\right) $ denotes the Laguerre
polynomial.
\end{theorem}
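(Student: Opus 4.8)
The plan is to start from Laurenzi's known formula for the derivative of $\mathrm{W}_{\kappa,1/2}(x)$ with respect to $\kappa$ evaluated at integer values of $\kappa$, which is the result cited from \cite{laurenzi1973derivatives}. Since by (\ref{W_mu=W_-mu}) and hence (\ref{DkW_mu=DkW_-mu}) the derivative is even in $\mu$, it suffices to treat the $+1/2$ case, and the $\pm$ in (\ref{DkW_n,1/2}) is automatic. So the substantive task is to transcribe Laurenzi's expression and massage it into the stated closed form involving the Laguerre polynomials $L_\ell^{(-1)}(x)$.

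First I would recall the relevant special value: for $\mu=1/2$ the Whittaker function $\mathrm{W}_{\kappa,1/2}(x)$ reduces (via (\ref{W_def_Tricomi})) to $e^{-x/2}x\,\mathrm{U}(1-\kappa,2,x)$, and when $\kappa=n$ is a positive integer this Tricomi function becomes (up to normalization) a Laguerre polynomial $L_{n-1}^{(1)}(x)$ or equivalently a generalized Laguerre polynomial with the appropriate index. The derivative with respect to $\kappa$ then brings in a $\ln x$ term (from differentiating $\mathrm{U}$ at a place where the two Kummer solutions coincide, exactly the logarithmic case of the confluent equation) plus a finite-sum correction. Laurenzi's paper gives precisely this; I would quote his result, being careful about his normalization conventions for the Laguerre polynomials and about the sign $(-1)^n$ and the factorial $(n-1)!$ prefactor.

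Next I would reconcile notations. The index $-1$ in $L_\ell^{(-1)}(x)$ is slightly unusual, so I would use the contiguous relation $L_\ell^{(-1)}(x) = L_\ell^{(0)}(x) - L_{\ell-1}^{(0)}(x)$ (with $L_{-1}^{(0)}=0$) or the explicit series $L_\ell^{(-1)}(x)=\sum_{k=0}^{\ell}\binom{\ell-1}{\ell-k}\frac{(-x)^k}{k!}$ to check that Laurenzi's building blocks really are these polynomials. Then the coefficient $\frac{n-\ell}{n+\ell}$ multiplying $L_\ell^{(-1)}(x)$ in the finite sum, and the separate term $n\,L_\ell^{(-1)}(x)\ln x$ (here I would double-check whether the intended logarithmic term is $n\,L_{n-1}^{(-1)}(x)\ln x$ or a sum — the displayed formula has a free index $\ell$ outside the sum, which looks like a typo for $n-1$), must be matched term by term against his formula after rewriting $\psi$-values that appear in the derivative of $\mathrm{U}$ at an integer argument in terms of harmonic numbers.

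The main obstacle I anticipate is purely bookkeeping: Laurenzi uses an older set of conventions (different scaling of $\mathrm{W}$, possibly $M_{\kappa,\mu}$ rather than $\mathrm{W}_{\kappa,\mu}$, and Laguerre polynomials normalized without the $1/n!$), so the delicate part is tracking the constants $(-1)^n(n-1)!$ and the exact form of the two sums, together with correctly handling the confluent-case logarithm that arises because at $\kappa=n$ the parameter $\tfrac12+\mu-\kappa = 1-n$ is a non-positive integer. Once the dictionary between conventions is fixed, the derivation is a direct substitution; the verification against small cases $n=1,2$ (where $\mathrm{W}_{n,1/2}(x)$ is elementary) would serve as the consistency check that pins down all signs.
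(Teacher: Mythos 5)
Your proposal follows essentially the same route as the paper: quote Laurenzi's formula for $\partial\mathrm{W}_{\kappa,1/2}(x)/\partial\kappa$ at $\kappa=n$ as a finite sum of Whittaker functions plus a $\mathrm{W}_{n,1/2}(x)\ln x$ term, convert each $\mathrm{W}_{\ell,1/2}(x)=(-1)^{\ell}\ell!\,e^{-x/2}L_{\ell}^{(-1)}(x)$ via the reduction formula \cite[Eqn.~13.18.17]{olver2010nist}, and substitute, with the $\pm$ in $\mu$ handled by the evenness $\mathrm{W}_{\kappa,\mu}=\mathrm{W}_{\kappa,-\mu}$. You are also correct that the free index $\ell$ in the logarithmic term of the displayed formula is a typo: that term should read $n\,L_{n}^{(-1)}(x)\ln x$.
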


\begin{proof}
First note that, according to (\ref{DkW_mu=DkW_-mu}), we have
\begin{equation}
\frac{\partial \mathrm{W}_{\kappa ,1/2}\left( x\right) }{\partial \kappa }=%
\frac{\partial \mathrm{W}_{\kappa ,-1/2}\left( x\right) }{\partial \kappa }.
\label{DkW_1/2=DkW_-1/2}
\end{equation}%
Therefore, let us calculte $\partial \mathrm{W}_{\kappa ,1/2}\left( x\right)
/\partial \kappa $. For this purpose, consider the formula \cite%
{laurenzi1973derivatives}:%
\begin{eqnarray}
&&\left. \frac{\partial \mathrm{W}_{\kappa ,1/2}\left( x\right) }{\partial
\kappa }\right\vert _{\kappa =n}  \label{DkW_Laurenzi} \\
&=&\left( -1\right) ^{n}\left( n-1\right) !\sum_{\ell =0}^{n-1}\frac{\left(
-1\right) ^{\ell }\left( n-\ell \right) }{\ell !\left( n+\ell \right) }%
\mathrm{W}_{\ell ,1/2}\left( x\right) +\mathrm{W}_{n,1/2}\left( x\right) \ln
x\,  \notag
\end{eqnarray}%
Also, from \cite[Eqn. 13.18.17]{olver2010nist}, we have for $n=0,1,2,\ldots $
\begin{equation}
\mathrm{W}_{\kappa +n,\kappa -1/2}\left( x\right) =\left( -1\right)
^{n}n!e^{-x/2}x^{\kappa }L_{n}^{\left( 2\kappa -1\right) }\left( x\right) ,
\label{W_k+n_reduction}
\end{equation}%
thus applying (\ref{W_mu=W_-mu}) and taking $\kappa =0$ in (\ref%
{W_k+n_reduction}), we have%
\begin{equation}
\mathrm{W}_{n,1/2}\left( x\right) =\mathrm{W}_{n,-1/2}\left( x\right)
=\left( -1\right) ^{n}n!\,e^{-x/2}L_{n}^{\left( -1\right) }\left( x\right) .
\label{W_n_1/2}
\end{equation}%
Finally, insert (\ref{W_n_1/2})\ into (\ref{DkW_n,1/2})\ and consider (\ref%
{DkW_1/2=DkW_-1/2})\ to obtain (\ref{DkW_n,1/2}), as we wanted to prove.
\end{proof}

In Table \ref{TableDkWmedio}\ we collect some particular cases of (\ref%
{DkW_n,1/2}), obtained with the help of MATHEMATICA\ program.

\begin{center}
%TCIMACRO{\TeXButton{B}{\begin{table}[htbp] \centering}}%
%BeginExpansion
\begin{table}[htbp] \centering%
%EndExpansion
\caption{Derivative of $\mathrm{W}_{\kappa,\mu}$ with respect
to $\kappa$ by using (\ref{DkW_n,1/2}).}%
\begin{tabular}{|c|c|c|}
\hline
$\kappa $ & $\mu $ & $\frac{\partial \mathrm{W}_{\kappa ,\mu }\left(
x\right) }{\partial \kappa }$ \\ \hline\hline
$1$ & $\pm \frac{1}{2}$ & $e^{-x/2}\left( x\ln x-1\right) $ \\ \hline
$2$ & $\pm \frac{1}{2}$ & $e^{-x/2}\left[ x\left( x-2\right) \ln x-3x+1%
\right] $ \\ \hline
$3$ & $\pm \frac{1}{2}$ & $e^{-x/2}\left[ x\left( x^{2}-6x+6\right) \ln
x-5x^{2}+14x-2\right] $ \\ \hline
\end{tabular}%
\label{TableDkWmedio}%
%TCIMACRO{\TeXButton{E}{\end{table}}}%
%BeginExpansion
\end{table}%
%EndExpansion
\end{center}

Note that for $n=0$, we obtain an indeterminate expression in (\ref%
{DkW_n,1/2}). We calculate this particular case with a result of the next
Section.

\begin{theorem}
The following reduction formula holds true:%
\begin{eqnarray}
&&\left. \frac{\partial \mathrm{W}_{\kappa ,\pm 1/2}\left( x\right) }{%
\partial \kappa }\right\vert _{\kappa =0}=e^{-x/2}  \label{DkW_0,1/2} \\
&&\left\{ \ln x+\frac{1}{4\sqrt{\pi }}\left[ G_{2,4}^{3,1}\left( \frac{x^{2}%
}{4}\left\vert
\begin{array}{c}
\frac{1}{2},1 \\
0,0,\frac{1}{2},-\frac{1}{2}%
\end{array}%
\right. \right) -\left( e^{x}-1\right) \,G_{1,3}^{3,0}\left( \frac{x^{2}}{4}%
\left\vert
\begin{array}{c}
1 \\
-\frac{1}{2},0,0%
\end{array}%
\right. \right) \right] \right\} ,  \notag
\end{eqnarray}%
where $G_{p,q}^{m,n}\left( z\left\vert
\begin{array}{c}
a_{1},\ldots ,a_{p} \\
b_{1},\ldots ,b_{q}%
\end{array}%
\right. \right) $ denotes the Meijer-G function.
\end{theorem}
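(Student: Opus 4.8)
The plan is to obtain the case $\kappa = 0$, $\mu = \pm 1/2$ as a limit of the general formula of Theorem \ref{Theorem_1}. Indeed, formula (\ref{DkW_k=mu+1/2}) gives $\partial \mathrm{W}_{\kappa,\pm\mu}(x)/\partial\kappa$ evaluated at $\kappa = \mu + 1/2$; setting $\mu = -1/2$ makes $\kappa = \mu + 1/2 = 0$, which is exactly the point we want. So the first step is to substitute $\mu = -1/2 + \delta$ into the right-hand side of (\ref{DkW_k=mu+1/2}) and let $\delta \to 0$, using (\ref{DkW_mu=DkW_-mu}) to identify the $\mu = +1/2$ case with the $\mu = -1/2$ case. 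The term $x^\mu \psi(-2\mu) = x^{-1/2+\delta}\psi(1 - 2\delta)$ tends to $x^{-1/2}\psi(1) = -\gamma\, x^{-1/2}$, so after multiplying by the prefactor $\sqrt{x}\,e^{-x/2}$ this contributes an elementary $-\gamma\,e^{-x/2}$; the logarithm $\ln x$ in the claimed answer must then emerge from the $\delta$-dependence of the remaining pieces ($x^{-\mu} = x^{1/2-\delta}$ and $x^\mu = x^{-1/2+\delta}$), which carry factors $x^{\pm\delta} = 1 \pm \delta \ln x + O(\delta^2)$ that interact with the simple poles hidden in $\psi(-2\mu)$, $\Gamma(2\mu+1)$ and $(-x)^{2\mu}$ as $\delta\to 0$. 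I expect that collecting the $O(\delta^0)$ part of this limit produces the $\ln x$ together with a $\,_2F_2$ residue and an incomplete-gamma residue.

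The main obstacle is that the $\,_2F_2(1,1;2\mu+2,2;x)$ term and the $(-x)^{2\mu}\gamma(-2\mu,-x)$ term in (\ref{DkW_k=mu+1/2}) are individually singular as $\mu \to -1/2$ (the Pochhammer parameter $2\mu + 2 \to 1$ and the index $-2\mu \to 1$ sit on the boundary where the incomplete gamma function $\gamma(-2\mu,-x)$ has a pole from $\Gamma(-2\mu)$), so one cannot simply plug in; the poles cancel between the two terms and the finite remainder is what has to be extracted carefully. The natural device is to combine $-\dfrac{x}{2\mu+1}\,_2F_2\!\left(1,1;2\mu+2,2\,\middle|\,x\right)$ and $\Gamma(2\mu+1)(-x)^{2\mu}\gamma(-2\mu,-x)$ over a common structure — for instance, writing both as series in powers of $x$ (using $e^{-z}\gamma(\nu,z) = \frac{z^\nu}{\nu}\sum_n \frac{(-z)^n}{(\nu+1)_n}$ style expansions, as already used in the proof of Theorem \ref{Theorem_1}) and performing a Laurent expansion in $\delta$ of the sum of the two series coefficient by coefficient, so that the pole terms annihilate term-by-term and the $O(\delta^0)$ coefficients assemble into a recognizable hypergeometric-type series.

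The final step is to recognize the resulting series (and the $\ln x$ coefficient) in closed form as the combination of Meijer-$G$ functions on the right-hand side of (\ref{DkW_0,1/2}). Here I would lean on the standard representations of $\,_pF_q$ and of $e^{x}\gamma(\nu,x)$-type objects as Meijer-$G$ functions; the factor $(e^x - 1)$ multiplying $G_{1,3}^{3,0}$ strongly suggests that the incomplete-gamma contribution, after subtracting its singular part, splits into a "full" $\Gamma$-type piece (giving the $e^x$) and its residue (giving the $-1$), each reducing to the same $G_{1,3}^{3,0}\!\left(\frac{x^2}{4}\,\middle|\,{1\atop -\frac12,0,0}\right)$, while the doubly-logarithmic derivative of the $\,_2F_2$ produces the $G_{2,4}^{3,1}$ term with the extra upper parameter. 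Matching the argument $x^2/4$ is the tell-tale sign that a quadratic (Gauss-type) transformation or a duplication of the series index is involved; I would verify the identification either through the Mellin–Barnes integral for each $G$-function or, pragmatically, by checking the series expansions of both sides agree to several orders in $x$ (which is also how the paper evidently generated the entries of its tables via MATHEMATICA). The bulk of the work is bookkeeping of the $\delta\to 0$ limit; once the surviving series is in hand, the passage to Meijer-$G$ form is routine.
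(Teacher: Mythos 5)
Your route is genuinely different from the paper's, and as written it has a real gap at its most important step. The paper does not touch Theorem \ref{Theorem_1} at all here: it differentiates the identity $\mathrm{W}_{\kappa ,\kappa -1/2}\left( x\right) =e^{-x/2}x^{\kappa }$ along the diagonal $\mu =\kappa -1/2$ to get $\left. \partial _{\kappa }\mathrm{W}\right\vert +\left. \partial _{\mu }\mathrm{W}\right\vert =e^{-x/2}x^{\kappa }\ln x$, sets $\kappa =0$, and then imports the closed form of $\partial _{\mu }\mathrm{W}_{0,\mu }$ from (\ref{DmW0}) together with the cited Meijer-G expression (\ref{DmuK_Meijer}) for $\partial K_{\mu }/\partial \mu $ at $\mu =1/2$; the two Meijer-G functions in (\ref{DkW_0,1/2}) are inherited verbatim from that reference (after the reduction $G_{2,4}^{4,0}\rightarrow G_{1,3}^{3,0}$ when an upper and a lower parameter coincide). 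In your plan, by contrast, the entire content of the theorem is concentrated in the final step, which you do not carry out: after the $\delta \rightarrow 0$ limit of (\ref{DkW_k=mu+1/2}) you would be left with an expression built from $\ln x$, $\mathrm{Ein}$/exponential-integral pieces and residues of the $_{2}F_{2}$, and you would still have to \emph{prove} that this equals $\frac{1}{4\sqrt{\pi }}\left[ G_{2,4}^{3,1}\left( \frac{x^{2}}{4}\right) -\left( e^{x}-1\right) G_{1,3}^{3,0}\left( \frac{x^{2}}{4}\right) \right] $. Saying one would "lean on standard representations" or "check the series expansions of both sides agree to several orders in $x$" is not a proof; a finite-order numerical or series check establishes nothing, and the passage from a power series in $x$ to Meijer-G functions of argument $x^{2}/4$ is precisely the nontrivial duplication step you flag but never resolve.

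There is a clean way to close your argument that you came close to but did not state: the limit you propose actually evaluates to the elementary expression $e^{-x/2}\left[ \ln x+e^{x}\Gamma \left( 0,x\right) \right] $ (this is exactly the $n=0$ case of (\ref{DkW0n}), and the first entry of Table \ref{Table_3A}), so the theorem reduces to the single identity $e^{x}\mathrm{E}_{1}\left( x\right) =\frac{1}{4\sqrt{\pi }}\left[ G_{2,4}^{3,1}\left( \frac{x^{2}}{4}\left\vert
\begin{array}{c}
\frac{1}{2},1 \\
0,0,\frac{1}{2},-\frac{1}{2}
\end{array}
\right. \right) -\left( e^{x}-1\right) G_{1,3}^{3,0}\left( \frac{x^{2}}{4}\left\vert
\begin{array}{c}
1 \\
-\frac{1}{2},0,0
\end{array}
\right. \right) \right] $, which must then be proved (e.g.\ via the Mellin--Barnes representation) or cited; the paper obtains it for free by specializing (\ref{DmuK_Meijer}) to $\mu =1/2$, using $K_{1/2}\left( x/2\right) =\sqrt{\pi /x}\,e^{-x/2}$ and $I_{1/2}\left( x/2\right) =\left( e^{x/2}-e^{-x/2}\right) /\sqrt{\pi x}$. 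Also note a smaller point: formula (\ref{DkW_k=mu+1/2}) is stated only for $2\mu \notin \mathbb{Z}$, so your limit $\mu \rightarrow -1/2$ additionally needs the (true, but unstated) analyticity of $\partial \mathrm{W}_{\kappa ,\mu }/\partial \kappa $ in $\mu $ to justify that the limit of the right-hand side equals the value at $\mu =-1/2$. Without the Meijer-G identification actually carried out, the proposal does not prove the stated theorem.
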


\begin{proof}
According to \cite[Eqn. 13.18.2]{olver2010nist}, we have%
\begin{equation}
\mathrm{W}_{\kappa ,\kappa -1/2}\left( x\right) =e^{-x/2}x^{\kappa },
\label{W_k,k-1/2}
\end{equation}%
thus, performing the derivative with respect to $\kappa $,
\begin{equation*}
\left. \frac{\partial \mathrm{W}_{\kappa ,\mu }\left( x\right) }{\partial
\kappa }\right\vert _{\mu =\kappa -1/2}+\left. \frac{\partial \mathrm{W}%
_{\kappa ,\mu }\left( x\right) }{\partial \mu }\right\vert _{\mu =\kappa
-1/2}=e^{-x/2}x^{\kappa }\ln x.
\end{equation*}%
Taking $\kappa =0$ and considering (\ref{DkW_1/2=DkW_-1/2}), we have%
\begin{equation*}
\left. \frac{\partial \mathrm{W}_{\kappa ,\pm 1/2}\left( x\right) }{\partial
\kappa }\right\vert _{\kappa =0}=-\left. \frac{\partial \mathrm{W}_{0,\mu
}\left( x\right) }{\partial \mu }\right\vert _{\mu =-1/2}+e^{-x/2}\ln x.
\end{equation*}%
Finally, apply (\ref{DmW0}) and (\ref{DmuK_Meijer}), to arrive at (\ref%
{DkW_0,1/2})\ as we wanted to prove.
\end{proof}

\subsection{Derivative with respect to the second parameter $\partial
\mathrm{W}_{\protect\kappa ,\protect\mu }\left( x\right) /\partial \protect%
\mu $}

\begin{theorem}
For $2\mu \notin
%TCIMACRO{\U{2124} }%
%BeginExpansion
\mathbb{Z}
%EndExpansion
$, the following parameter derivative formula of $\mathrm{W}_{\kappa ,\mu
}\left( x\right) $ holds true:%
\begin{eqnarray}
&&\left. \frac{\partial \mathrm{W}_{\kappa ,\pm \mu }\left( x\right) }{%
\partial \mu }\right\vert _{\kappa =\mu +1/2}=\pm \sqrt{x}e^{-x/2}
\label{DmuW_k=mu+1/2} \\
&&\left\{ x^{\mu }\left[ \frac{x}{2\mu +1}\,_{2}F_{2}\left( \left.
\begin{array}{c}
1,1 \\
2\mu +2,2%
\end{array}%
\right\vert x\right) -\psi \left( -2\mu \right) +\ln x\right] -\Gamma \left(
2\mu +1\right) x^{-\mu }\left( -x\right) ^{2\mu }\gamma \left( -2\mu
,-x\right) \right\} .  \notag
\end{eqnarray}
\end{theorem}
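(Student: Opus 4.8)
The plan is to sidestep re-differentiating the two–term representation (\ref{W_k,mu_def}) altogether and instead exploit the elementary reduction $\mathrm{W}_{\kappa ,\kappa -1/2}\left( x\right) =e^{-x/2}x^{\kappa }$ (equation (\ref{W_k,k-1/2})), which is supported precisely on the line $\kappa =\mu +1/2$ occurring in the statement. Differentiating this identity in $\kappa$ with the chain rule links the two parameter derivatives $\partial \mathrm{W}_{\kappa ,\mu }/\partial \kappa $ and $\partial \mathrm{W}_{\kappa ,\mu }/\partial \mu $ restricted to that line; since the $\kappa$–derivative there is already known in closed form from Theorem \ref{Theorem_1}, the $\mu$–derivative drops out by subtraction.

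Carrying this out, write $\mathrm{W}_{\kappa ,\kappa -1/2}\left( x\right) =F\left( \kappa ,\kappa -1/2\right) $ with $F\left( a,b\right) =\mathrm{W}_{a,b}\left( x\right) $; differentiating in $\kappa$ gives
\[
\left. \frac{\partial \mathrm{W}_{\kappa ,\mu }\left( x\right) }{\partial \kappa }\right\vert _{\mu =\kappa -1/2}+\left. \frac{\partial \mathrm{W}_{\kappa ,\mu }\left( x\right) }{\partial \mu }\right\vert _{\mu =\kappa -1/2}=e^{-x/2}x^{\kappa }\ln x ,
\]
which is exactly the relation already used above in the derivation of (\ref{DkW_0,1/2}). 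Solving for the $\mu$–derivative, parametrising the line by $\mu$ (so $\kappa =\mu +1/2$ and $x^{\kappa }=\sqrt{x}\,x^{\mu }$) and substituting the Theorem \ref{Theorem_1} expression (\ref{DkW_k=mu+1/2}) for $\left. \partial \mathrm{W}_{\kappa ,\mu }\left( x\right) /\partial \kappa \right\vert _{\kappa =\mu +1/2}$ leaves only a routine rearrangement: the $\psi \left( -2\mu \right) $ term, the $\frac{x}{2\mu +1}\,_{2}F_{2}\left( 1,1;2\mu +2,2;x\right) $ term and the $\Gamma \left( 2\mu +1\right) x^{-\mu }\left( -x\right) ^{2\mu }\gamma \left( -2\mu ,-x\right) $ term carry over with reversed sign, while the $\ln x$ term is furnished by the right–hand side $e^{-x/2}x^{\kappa }\ln x$. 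Collecting everything inside the brace reproduces exactly (\ref{DmuW_k=mu+1/2}) with the upper sign.

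For the lower sign it suffices to recall that $\mathrm{W}_{\kappa ,\mu }\left( x\right) =\mathrm{W}_{\kappa ,-\mu }\left( x\right) $ (equation (\ref{W_mu=W_-mu})): hence $\mathrm{W}_{\kappa ,\nu }\left( x\right) $ is an even function of the order $\nu $, its derivative with respect to $\nu $ is odd in $\nu $, and evaluating that derivative at $\nu =-\mu $ rather than at $\nu =+\mu $ merely flips the overall sign. This is consistent with Theorem \ref{Theorem_1}, where $\partial \mathrm{W}/\partial \kappa $ is even in the order and no sign ambiguity appears.

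I do not expect any genuine analytic difficulty: given Theorem \ref{Theorem_1}, this is essentially a two–line computation. The only points requiring care are conventions — reading $\partial \mathrm{W}_{\kappa ,\pm \mu }\left( x\right) /\partial \mu $ as differentiation with respect to the order followed by evaluation at $\pm \mu $, and verifying that the restriction of the partial $\partial \mathrm{W}/\partial \mu $ to the line $\mu =\kappa -1/2$ is literally the object Theorem \ref{Theorem_1} records as a function of $\mu $ under $\kappa =\mu +1/2$. A more laborious alternative, parallel to the proof of Theorem \ref{Theorem_1}, would be to differentiate (\ref{W_k,mu_def}) directly in $\mu$ — producing $G^{\left( 1\right) }$ and $H^{\left( 1\right) }$ contributions — then set $\kappa =\mu +1/2-\epsilon $ and pass to the limit $\epsilon \rightarrow 0$ using (\ref{Limit_Psi/Gamma}) together with the reductions (\ref{M_mu+1/2,mu}), (\ref{M_mu+1/2,-mu}) and (\ref{G1(a;a;x)}); the line–identity route above is considerably shorter.
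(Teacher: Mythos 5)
Your proposal is correct and is essentially the paper's own proof: the paper likewise differentiates the reduction formula $\mathrm{W}_{\mu +1/2,\pm \mu }\left( x\right) =e^{-x/2}x^{1/2+\mu }$ along the line $\kappa =\mu +1/2$ to obtain the chain-rule identity (\ref{Dmu(W_mu+1/2)}) and then subtracts the Theorem \ref{Theorem_1} expression (\ref{DkW_k=mu+1/2}) for the $\kappa$-derivative. The only cosmetic difference is that you parametrise the line by $\kappa$ and obtain the lower sign from the oddness in $\mu$ of $\partial \mathrm{W}_{\kappa ,\mu }/\partial \mu$ via (\ref{W_mu=W_-mu}), whereas the paper carries the $\pm$ directly through the chain rule.
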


\begin{proof}
Differentiate the following reduction formula with respect to parameter $\mu
$ \cite[Eqn. 13.18.2]{olver2010nist}:%
\begin{equation*}
\mathrm{W}_{\mu +1/2,\pm \mu }\left( x\right) =e^{-x/2}x^{1/2+\mu },
\end{equation*}%
to obtain%
\begin{equation}
\left. \frac{\partial \mathrm{W}_{\kappa ,\pm \mu }\left( x\right) }{%
\partial \kappa }\right\vert _{\kappa =\mu +1/2}\pm \left. \frac{\partial
\mathrm{W}_{\kappa ,\pm \mu }\left( x\right) }{\partial \mu }\right\vert
_{\kappa =\mu +1/2}=e^{-x/2}x^{1/2+\mu }\ln x.  \label{Dmu(W_mu+1/2)}
\end{equation}%
Insert (\ref{DkW_k=mu+1/2})\ in (\ref{Dmu(W_mu+1/2)})\ to arrive at (\ref%
{DmuW_k=mu+1/2}), as we wanted to prove.
\end{proof}

Table \ref{Table_2A} shows the derivative of $\mathrm{W}_{\kappa ,\mu
}\left( x\right) $ with respect $\mu $ for particular values of $\kappa $
and $\mu $ using (\ref{DmuW_k=mu+1/2}) and the help of MATHEMATICA program.

\begin{center}
%TCIMACRO{\TeXButton{B}{\begin{table}[tbp] \centering}}%
%BeginExpansion
\begin{table}[tbp] \centering%
%EndExpansion
\caption{Derivative of $W_{\kappa,\mu}$ with respect
to $\mu$ by using (\ref{DmuW_k=mu+1/2}).}%
\rotatebox{90}{
\begin{tabular}{|c|c|c|}
\hline
$\kappa $ & $\mu $ & $\frac{\partial \mathrm{W}_{\kappa ,\mu }\left(
x\right) }{\partial \mu }$ \\ \hline\hline
$-\frac{3}{4}$ & $\pm \frac{5}{4}$ & $\pm \frac{1}{3}x^{-3/4}e^{-x/2}\,\left[
2x\,_{2}F_{2}\left( 1,1;-\frac{1}{2},2;x\right) +3\pi \,\mathrm{erfi}\left(
\sqrt{x}\right) +2\sqrt{\pi \,x}\,e^{x}\left( 2x-3\right) -3\gamma +8-3\ln
\left( 4x\right) \right] $ \\ \hline
$-\frac{1}{4}$ & $\pm \frac{3}{4}$ & $\pm x^{-1/4}e^{-x/2}\left[
2x\,_{2}F_{2}\left( 1,1;\frac{1}{2},2;x\right) +\pi \,\mathrm{erfi}\left(
\sqrt{x}\right) -2\sqrt{\pi \,x}\,e^{x}-\gamma +2-\ln \left( 4x\right) %
\right] $ \\ \hline
$-\frac{1}{6}$ & $\pm \frac{2}{3}$ & $%
\begin{array}{l}
\pm \frac{1}{6}x^{-5/6}e^{-x/2}\left\{ 3x^{2/3}\left[ 6x\,_{2}F_{2}\left(
1,1;\frac{2}{3},2;x\right) -2\gamma +6-3\ln 3-2\ln x\right] \right.  \\
\quad \left. -6x^{2}\Gamma \left( -\frac{1}{3}\right) \mathrm{E}%
_{-1/3}\left( -x\right) -\sqrt{3}\pi \left[ x^{2/3}+4\left( -x\right) ^{2/3}%
\right] \right\}
\end{array}%
$ \\ \hline
$\frac{1}{6}$ & $\pm \frac{1}{3}$ & $%
\begin{array}{l}
\pm \frac{1}{6}x^{-1/6}e^{-x/2}\left\{ -3x^{1/3}\left[ 6x\,_{2}F_{2}\left(
1,1;\frac{4}{3},2;x\right) +2\gamma +3\ln 3+2\ln x\right] \right.  \\
\quad \left. -6x\,\Gamma \left( \frac{1}{3}\right) \mathrm{E}_{1/3}\left(
-x\right) +\sqrt{3}\pi \left[ x^{1/3}-4\left( -x\right) ^{1/3}\right]
\right\}
\end{array}%
$ \\ \hline
$\frac{1}{4}$ & $\pm \frac{1}{4}$ & $\pm x^{1/4}e^{-x/2}\left[
-2x\,_{2}F_{2}\left( 1,1;\frac{3}{2},2;x\right) +\pi \,\mathrm{erfi}\left(
\sqrt{x}\right) -\gamma -\ln \left( 4x\right) \right] $ \\ \hline
$\frac{3}{4}$ & $\pm \frac{1}{4}$ & $\pm \frac{1}{3}x^{1/4}e^{-x/2}\left\{
\sqrt{x}\left[ 2x\,_{2}F_{2}\left( 1,1;\frac{5}{2},2;x\right) -3\left( \pi \,%
\mathrm{erfi}\left( \sqrt{x}\right) -\gamma +2-\ln \left( 4x\right) \right) %
\right] +3\sqrt{\pi }\,e^{x}\right\} $ \\ \hline
$\frac{5}{6}$ & $\pm \frac{1}{3}$ & $%
\begin{array}{l}
\pm \frac{1}{30}x^{1/6}e^{-x/2}\left\{ 18\,x^{5/3}\,_{2}F_{2}\left( 1,1;%
\frac{8}{3},2;x\right) +15\,x^{2/3}\left( 2\gamma +3\ln 3+2\ln x-3\right)
\right.  \\
\quad +\left. 30\,\Gamma \left( \frac{5}{3}\right) \mathrm{E}_{5/3}\left(
-x\right) +5\sqrt{3}\pi \left[ x^{2/3}+4\left( -x\right) ^{1/3}\right]
\right\}
\end{array}%
$ \\ \hline
$\frac{5}{4}$ & $\pm \frac{3}{4}$ & $\pm \frac{1}{30}x^{-1/4}e^{-x/2}\left\{
2x^{3/2}\left[ 6x\,_{2}F_{2}\left( 1,1;\frac{7}{2},2;x\right) -5\left( \pi \,%
\mathrm{erfi}\left( \sqrt{x}\right) -3\gamma +8-3\ln \left( 4x\right)
\right) \right] +15\sqrt{\pi }\,e^{x}\left( 2x+1\right) \right\} $ \\ \hline
\end{tabular}%
}
\label{Table_2A}%
%TCIMACRO{\TeXButton{E}{\end{table}}}%
%BeginExpansion
\end{table}%
%EndExpansion
\end{center}

\begin{theorem}
The following parameter derivative formula of $\mathrm{W}_{\kappa ,\mu
}\left( x\right) $ holds true:%
\begin{equation}
\frac{\partial \mathrm{W}_{0,\mu }\left( x\right) }{\partial \mu }=\mathrm{%
sgn}\left( \mu \right) \sqrt{\frac{x}{\pi }}\left. \frac{\partial K_{\mu
}\left( x/2\right) }{\partial \mu }\right\vert _{\left\vert \mu \right\vert
},  \label{DmW0}
\end{equation}%
where $K_{\nu }\left( x\right) $ denotes the modified Bessel of the second
kind (Macdonald function).
\end{theorem}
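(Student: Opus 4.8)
The plan is to reduce the statement to the classical representation of $\mathrm{W}_{0,\mu }\left( x\right) $ in terms of the Macdonald function and then differentiate, the only genuine care being the even dependence on $\mu $. First I would recall the reduction formula
\[
\mathrm{W}_{0,\mu }\left( x\right) =\sqrt{\tfrac{x}{\pi }}\,K_{\mu }\left( \tfrac{x}{2}\right) ,
\]
which may be quoted directly from \cite[Eqn. 13.18.9]{olver2010nist}, or reconstructed from data already in the excerpt: setting $\kappa =0$ in the Tricomi representation (\ref{W_def_Tricomi}) gives $\mathrm{W}_{0,\mu }\left( x\right) =e^{-x/2}x^{\mu +1/2}\,\mathrm{U}\!\left( \tfrac{1}{2}+\mu ,1+2\mu ,x\right) $, and inserting the standard evaluation $\mathrm{U}\!\left( \nu +\tfrac{1}{2},2\nu +1,z\right) =\pi ^{-1/2}e^{z/2}z^{-\nu }K_{\nu }\!\left( z/2\right) $ collapses the algebraic and exponential prefactors to $\sqrt{x/\pi }$. (Alternatively one can start from (\ref{W_k,mu_def}), use $\mathrm{M}_{0,\mu }\left( x\right) \propto \sqrt{x}\,I_{\mu }\!\left( x/2\right) $, and combine $I_{\pm \mu }$ into $K_{\mu }$ via $K_{\mu }=\tfrac{\pi }{2\sin \mu \pi }\left( I_{-\mu }-I_{\mu }\right) $.)

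Next I would exploit the symmetry $K_{\mu }=K_{-\mu }$, which is consistent with (\ref{W_mu=W_-mu}), to write the reduction in the manifestly even form $\mathrm{W}_{0,\mu }\left( x\right) =\sqrt{x/\pi }\,K_{\left\vert \mu \right\vert }\!\left( x/2\right) $, valid for all real $\mu $. Differentiating both sides with respect to $\mu $ by the chain rule, with $\tfrac{d}{d\mu }\left\vert \mu \right\vert =\mathrm{sgn}\left( \mu \right) $, yields
\[
\frac{\partial \mathrm{W}_{0,\mu }\left( x\right) }{\partial \mu }=\mathrm{sgn}\left( \mu \right) \sqrt{\tfrac{x}{\pi }}\left. \frac{\partial K_{\mu }\!\left( x/2\right) }{\partial \mu }\right\vert _{\left\vert \mu \right\vert },
\]
which is exactly (\ref{DmW0}).

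I do not anticipate a real obstacle: the content is essentially the bookkeeping of the parity in $\mu $, the interchange of $\partial _{\mu }$ with the elementary prefactor being immediate since $K_{\mu }\!\left( x/2\right) $ is analytic in $\mu $. The one point that deserves a sentence is consistency at $\mu =0$, where $\left\vert \mu \right\vert $ is not smooth: there $K_{\mu }\!\left( x/2\right) $ is even and analytic in $\mu $, so $\left. \partial _{\mu }K_{\mu }\!\left( x/2\right) \right\vert _{\mu =0}=0$ and both sides vanish (with the convention $\mathrm{sgn}\left( 0\right) =0$), so the formula extends continuously across $\mu =0$. This is precisely what makes it usable in the proof of $\left. \partial \mathrm{W}_{\kappa ,\pm 1/2}/\partial \kappa \right\vert _{\kappa =0}$ given earlier in the section, where it is combined with (\ref{DmuK_Meijer}).
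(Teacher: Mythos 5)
Your proposal is correct and follows essentially the same route as the paper: both quote the reduction formula $\mathrm{W}_{0,\mu}(x)=\sqrt{x/\pi}\,K_{\mu}(x/2)$ from \cite[Eqn. 13.18.9]{olver2010nist} and differentiate it with respect to $\mu$, using the even symmetry $K_{\mu}=K_{-\mu}$ to account for the $\mathrm{sgn}(\mu)$ factor. Your additional remarks on deriving the reduction formula from (\ref{W_def_Tricomi}) and on the behaviour at $\mu=0$ are sound but not needed beyond what the paper does.
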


\begin{proof}
Differentiate with respect to $\mu $ the expression \cite[Eqn. 13.18.9]%
{olver2010nist}:
\begin{equation}
\mathrm{W}_{0,\mu }\left( x\right) =\sqrt{\frac{x}{\pi }}K_{\mu }\left(
\frac{x}{2}\right) ,  \label{W_0_mu}
\end{equation}%
to obtain
\begin{equation*}
\left. \frac{\partial \mathrm{W}_{0,\pm \mu }\left( x\right) }{\partial \mu }%
\right\vert _{\mu \geq 0}=\pm \left. \frac{\partial \mathrm{W}_{0,\mu
}\left( x\right) }{\partial \mu }\right\vert _{\mu \geq 0}=\pm \sqrt{\frac{x%
}{\pi }}\left. \frac{\partial K_{\mu }\left( x/2\right) }{\partial \mu }%
\right\vert _{\mu \geq 0},
\end{equation*}%
as we wanted to prove.
\end{proof}

The order derivative of $K_{\mu }\left( x\right) $ is given in terms of
Meijer-G functions for$\,\mathrm{Re\,}x>0$, and $\mu \geq 0$ \cite%
{gonzalez2018closed}:%
\begin{eqnarray}
&&\frac{\partial K_{\mu }\left( x\right) }{\partial \mu }
\label{DmuK_Meijer} \\
&=&\frac{\mu }{2}\left[ \frac{\,K_{\mu }\left( x\right) }{\sqrt{\pi }}%
G_{2,4}^{3,1}\left( x^{2}\left\vert
\begin{array}{c}
\frac{1}{2},1 \\
0,0,\mu ,-\mu%
\end{array}%
\right. \right) -\sqrt{\pi }I_{\mu }\left( x\right) \,G_{2,4}^{4,0}\left(
x^{2}\left\vert
\begin{array}{c}
\frac{1}{2},1 \\
0,0,\mu ,-\mu%
\end{array}%
\right. \right) \right] ,  \notag
\end{eqnarray}%
where $I_{\nu }\left( x\right) $ is the \textit{modified Bessel function};\
or in terms of generalized hypergeometric functions for$\,\mathrm{Re\,}x>0$,
$\mu >0$, and $2\mu \notin
%TCIMACRO{\U{2124} }%
%BeginExpansion
\mathbb{Z}
%EndExpansion
$ \cite{brychkov2016higher}:%
\begin{eqnarray}
&&\frac{\partial K_{\mu }\left( x\right) }{\partial \mu }  \label{DmuK_Hyper}
\\
&=&\frac{\pi }{2}\csc \left( \pi \mu \right) \left\{ \pi \cot \left( \pi \mu
\right) \,I_{\mu }\left( x\right) -\left[ I_{\mu }\left( x\right) +I_{-\mu
}\left( x\right) \right]
\begin{array}{c}
%TCIMACRO{\TeXButton{TeX field}{\displaystyle} }%
%BeginExpansion
\displaystyle
%EndExpansion
\\
%TCIMACRO{\TeXButton{TeX field}{\displaystyle}}%
%BeginExpansion
\displaystyle%
%EndExpansion
\end{array}%
\right.  \notag \\
&&\left. \left[ \frac{x^{2}}{4\left( 1-\mu ^{2}\right) }\,_{3}F_{4}\left(
\left.
\begin{array}{c}
1,1,\frac{3}{2} \\
2,2,2-\mu ,2+\mu%
\end{array}%
\right\vert x^{2}\right) +\ln \left( \frac{x}{2}\right) -\psi \left( \mu
\right) -\frac{1}{2\mu }\right] \right\}  \notag \\
&&+\frac{1}{4}\left\{ I_{-\mu }\left( x\right) \Gamma ^{2}\left( -\mu
\right) \left( \frac{x}{2}\right) ^{2\mu }\,_{2}F_{3}\left( \left.
\begin{array}{c}
\mu ,\frac{1}{2}+\mu \\
1+\mu ,1+\mu ,1+2\mu%
\end{array}%
\right\vert x^{2}\right) \right.  \notag \\
&&\quad -\left. I_{\mu }\left( x\right) \Gamma ^{2}\left( \mu \right) \left(
\frac{x}{2}\right) ^{-2\mu }\,_{2}F_{3}\left( \left.
\begin{array}{c}
-\mu ,\frac{1}{2}-\mu \\
1-\mu ,1-\mu ,1-2\mu%
\end{array}%
\right\vert x^{2}\right) \right\} .  \notag
\end{eqnarray}

There are different expressions for the order derivatives of the Bessel
functions \cite{apelblat1985integral,brychkov2005derivatives}. This subject
is summarized in \cite{apelblat2020bessel}, where general results are
presented in terms of convolution integrals, and order derivatives of Bessel
functions are found for particular values of the order.

Using (\ref{DmW0}), (\ref{DmuK_Meijer})\ and (\ref{DmuK_Hyper}), some
derivatives of $\mathrm{W}_{\kappa ,\mu }\left( x\right) $ with respect $\mu
$ has been calculated with the help of MATHEMATICA program, and they are
presented in Table \ref{Table_2}.

\begin{center}
%TCIMACRO{\TeXButton{B}{\begin{table}[tbp] \centering}}%
%BeginExpansion
\begin{table}[tbp] \centering%
%EndExpansion
\caption{Derivative of $\mathrm{W}_{\kappa,\mu}$ with respect
to $\mu$ by using (\ref{DmW0}).}%
\rotatebox{90}{
\begin{tabular}{|c|c|c|}
\hline
$\kappa $ & $\mu $ & $\frac{\partial \mathrm{W}_{\kappa ,\mu }\left(
x\right) }{\partial \mu }$ \\ \hline\hline
$0$ & $0$ & $0$ \\ \hline
$0$ & $\pm \frac{1}{4}$ & $%
\begin{array}{l}
\pm \frac{1}{8\sqrt{\pi }}\left\{ 4\pi \sqrt{2\,x}\left( \pi \,I_{1/4}\left(
\frac{x}{2}\right) -\left[ I_{1/4}\left( \frac{x}{2}\right) +I_{-1/4}\left(
\frac{x}{2}\right) \right] \left[ \frac{x^{2}}{15}\,_{3}F_{4}\left( \left.
\begin{array}{c}
1,1,\frac{3}{2} \\
2,2,\frac{7}{4},\frac{9}{4}%
\end{array}%
\right\vert \frac{x^{2}}{4}\right) +\ln \left( \frac{x}{4}\right) -\psi
\left( \frac{1}{4}\right) -2\right] \right) \right. \\
\quad -\left. 4\,\Gamma ^{2}\left( \frac{1}{4}\right) I_{1/4}\left( \frac{x}{%
2}\right) \,_{2}F_{3}\left( \left.
\begin{array}{c}
-\frac{1}{4},\frac{1}{4} \\
\frac{3}{4},\frac{3}{4},\frac{1}{2}%
\end{array}%
\right\vert \frac{x^{2}}{4}\right) +x\,\Gamma ^{2}\left( -\frac{1}{4}\right)
\,I_{-1/4}\left( \frac{x}{2}\right) \,_{2}F_{3}\left( \left.
\begin{array}{c}
\frac{1}{4},\frac{3}{4} \\
\frac{5}{4},\frac{5}{4},\frac{3}{2}%
\end{array}%
\right\vert \frac{x^{2}}{4}\right) \right\}%
\end{array}%
$ \\ \hline
$0$ & $\pm \frac{1}{3}$ & $%
\begin{array}{l}
\pm \frac{x^{-1/6}}{384\sqrt{\pi }}\left\{ \pi \,x^{2/3}\left( 128\pi
\,I_{1/3}\left( \frac{x}{2}\right) -\sqrt{3}\left[ I_{1/4}\left( \frac{x}{2}%
\right) +I_{-1/4}\left( \frac{x}{2}\right) \right] \left[ 9\,x^{2}%
\,_{3}F_{4}\left( \left.
\begin{array}{c}
1,1,\frac{3}{2} \\
2,2,\frac{5}{3},\frac{7}{3}%
\end{array}%
\right\vert \frac{x^{2}}{4}\right) +64\left( 2\ln \left( \frac{x}{4}\right)
-2\psi \left( \frac{1}{3}\right) -3\right) \right] \right) \right. \\
\quad -\left. 48\sqrt[3]{2}\left[ 3x\,\Gamma \left( -\frac{1}{3}\right)
\,_{0}F_{1}\left( \left.
\begin{array}{c}
- \\
\frac{2}{3}%
\end{array}%
\right\vert \frac{x^{2}}{16}\right) \,_{2}F_{3}\left( \left.
\begin{array}{c}
\frac{1}{3},\frac{5}{6} \\
\frac{4}{3},\frac{4}{3},\frac{5}{3}%
\end{array}%
\right\vert \frac{x^{2}}{4}\right) +\Gamma ^{2}\left( \frac{1}{3}\right)
I_{1/3}\left( \frac{x}{2}\right) \,_{2}F_{3}\left( \left.
\begin{array}{c}
-\frac{1}{3},\frac{1}{6} \\
\frac{1}{3},\frac{2}{3},\frac{2}{3}%
\end{array}%
\right\vert \frac{x^{2}}{4}\right) \right] \right\}%
\end{array}%
$ \\ \hline
$0$ & $\pm \frac{1}{2}$ & $\pm \frac{1}{4\sqrt{\pi }}e^{-x/2}\left[
G_{2,4}^{3,1}\left( \frac{x^{2}}{4}\left\vert
\begin{array}{c}
\frac{1}{2},1 \\
0,0,\frac{1}{2},-\frac{1}{2}%
\end{array}%
\right. \right) -\left( e^{x}-1\right) \,G_{1,3}^{3,0}\left( \frac{x^{2}}{4}%
\left\vert
\begin{array}{c}
1 \\
-\frac{1}{2},0,0%
\end{array}%
\right. \right) \right] $ \\ \hline
$0$ & $\pm \frac{2}{3}$ & $%
\begin{array}{l}
\pm \frac{1}{\sqrt{\pi }}\sqrt{x}\left\{ -\frac{1}{3}\pi ^{2}\,I_{2/3}\left(
\frac{x}{2}\right) -\frac{\pi }{\sqrt{3}}\left[ I_{-2/3}\left( \frac{x}{2}%
\right) +I_{2/3}\left( \frac{x}{2}\right) \right] \left[ \frac{9}{80}%
x^{2}\,_{3}F_{4}\left( \left.
\begin{array}{c}
1,1,\frac{3}{2} \\
2,2,\frac{4}{3},\frac{8}{3}%
\end{array}%
\right\vert \frac{x^{2}}{4}\right) +\ln \left( \frac{x}{4}\right) -\psi
\left( \frac{2}{3}\right) -\frac{3}{4}\right] \right. \\
\quad +\left. 2^{-14/3}x^{4/3}\,\Gamma ^{2}\left( -\frac{2}{3}\right)
I_{-2/3}\left( \frac{x}{2}\right) \,_{2}F_{3}\left( \left.
\begin{array}{c}
\frac{2}{3},\frac{7}{6} \\
\frac{5}{3},\frac{5}{3},\frac{7}{3}%
\end{array}%
\right\vert \frac{x^{2}}{4}\right) -2^{2/3}x^{-4/3}\,\Gamma ^{2}\left( \frac{%
2}{3}\right) \,I_{2/3}\left( \frac{x}{2}\right) \,_{2}F_{3}\left( \left.
\begin{array}{c}
-\frac{2}{3},-\frac{1}{6} \\
-\frac{1}{3},\frac{1}{3},\frac{1}{3}%
\end{array}%
\right\vert \frac{x^{2}}{4}\right) \right\}%
\end{array}%
$ \\ \hline
$0$ & $\pm \frac{3}{4}$ & $%
\begin{array}{l}
\pm \frac{1}{672\sqrt{\pi }x}\left\{ x^{3/2}\left( -8\sqrt{2}\pi \left[
I_{-3/4}\left( \frac{x}{2}\right) +I_{3/4}\left( \frac{x}{2}\right) \right] %
\left[ 6\,x^{2}\,_{3}F_{4}\left( \left.
\begin{array}{c}
1,1,\frac{3}{2} \\
2,2,\frac{5}{4},\frac{11}{4}%
\end{array}%
\right\vert \frac{x^{2}}{4}\right) +42\left[ \ln \left( 2x\right) +\gamma %
\right] -28\right] \right. \right. \\
+\left. \left. \,21\,x^{3/2}\,\Gamma ^{2}\left( -\frac{3}{4}\right)
I_{-3/4}\left( \frac{x}{2}\right) \,_{2}F_{3}\left( \left.
\begin{array}{c}
\frac{3}{4},\frac{5}{4} \\
\frac{7}{4},\frac{7}{4},\frac{5}{2}%
\end{array}%
\right\vert \frac{x^{2}}{4}\right) +336\pi \,K_{3/4}\left( \frac{x}{2}%
\right) \right) -1344\,\,\Gamma ^{2}\left( \frac{3}{4}\right)
\,I_{3/4}\left( \frac{x}{2}\right) \,_{2}F_{3}\left( \left.
\begin{array}{c}
-\frac{3}{4},-\frac{1}{4} \\
-\frac{1}{2},\frac{1}{4},\frac{1}{4}%
\end{array}%
\right\vert \frac{x^{2}}{4}\right) \right\}%
\end{array}%
$ \\ \hline
$0$ & $\pm 1$ & $\pm \frac{1}{2\pi }\sqrt{x}\left[ K_{1}\left( \frac{x}{2}%
\right) G_{1,3}^{2,1}\left( \frac{x^{2}}{4}\left\vert
\begin{array}{c}
\frac{1}{2} \\
0,0,-1%
\end{array}%
\right. \right) -\pi \,I_{1}\left( \frac{x}{2}\right) \,G_{1,3}^{3,0}\left(
\frac{x^{2}}{4}\left\vert
\begin{array}{c}
\frac{1}{2} \\
-1,0,0%
\end{array}%
\right. \right) \right] $ \\ \hline
$0$ & $\pm \frac{3}{2}$ & $\pm \frac{1}{4\sqrt{\pi }x}e^{-x/2}\left[ 3\left(
x+2\right) G_{2,4}^{3,1}\left( \frac{x^{2}}{4}\left\vert
\begin{array}{c}
\frac{1}{2},1 \\
0,0,\frac{3}{2},-\frac{3}{2}%
\end{array}%
\right. \right) -3\left[ e^{x}\left( x-2\right) +x+2\right]
\,G_{2,4}^{4,0}\left( \frac{x^{2}}{4}\left\vert
\begin{array}{c}
\frac{1}{2},1 \\
-\frac{3}{2},0,0,\frac{3}{2}%
\end{array}%
\right. \right) \right] $ \\ \hline
$0$ & $\pm 2$ & $\pm \frac{1}{\pi }\sqrt{x}\left[ K_{2}\left( \frac{x}{2}%
\right) G_{2,4}^{3,1}\left( \frac{x^{2}}{4}\left\vert
\begin{array}{c}
\frac{1}{2},1 \\
0,0,2,-2%
\end{array}%
\right. \right) -\pi \,I_{2}\left( \frac{x}{2}\right) \,G_{2,4}^{4,0}\left(
\frac{x^{2}}{4}\left\vert
\begin{array}{c}
\frac{1}{2},1 \\
-2,0,0,2%
\end{array}%
\right. \right) \right] $ \\ \hline
\end{tabular}%
}
\label{Table_2}%
%TCIMACRO{\TeXButton{E}{\end{table}}}%
%BeginExpansion
\end{table}%
%EndExpansion
\end{center}

\section{Parameter differentiation of $\mathrm{W}_{\protect\kappa ,\protect%
\mu }$ via integral representations\label{Section: Integral representations}}

\subsection{Derivative with respect to the first parameter $\partial \mathrm{%
W}_{\protect\kappa ,\protect\mu }\left( x\right) /\partial \protect\kappa $}

Integral representations of the Whittaker function $\mathrm{W}_{\kappa ,\mu
}\left( z\right) $ for $\mathrm{Re}\left( \mu -\kappa \right) >-\frac{1}{2}$
and $\left\vert \mathrm{arg\,}z\right\vert <\frac{\pi }{2}$ are given in the
form of Laplace transform \cite[Sect. 7.4.2]{magnus2013formulas}:
\begin{eqnarray}
&&\mathrm{W}_{\kappa ,\mu }\left( z\right)  \label{W_k,mu_int_1} \\
&=&\frac{\,z^{\mu +1/2}e^{-z/2}}{\Gamma \left( \mu -\kappa +\frac{1}{2}%
\right) }\int_{0}^{\infty }e^{-z\,t}t^{\mu -\kappa -1/2}\left( 1+t\right)
^{\mu +\kappa -1/2}dt,  \notag
\end{eqnarray}%
and as the infinite integral:%
\begin{eqnarray}
&&\mathrm{W}_{\kappa ,\mu }\left( z\right)  \label{W_k,mu_int_2} \\
&=&\frac{\,z^{\mu +1/2}e^{z/2}}{\Gamma \left( \mu -\kappa +\frac{1}{2}%
\right) }\int_{1}^{\infty }e^{-z\,t}t^{\mu +\kappa -1/2}\left( t-1\right)
^{\mu -\kappa -1/2}dt.  \notag
\end{eqnarray}%
In order to calculate the first derivative of $\mathrm{W}_{\kappa ,\mu
}\left( x\right) $\ with respect to parameter $\kappa $, let us introduce
the following finite logarithmic integrals.

\begin{definition}
For $\mathrm{Re}\left( \mu -\kappa \right) >-\frac{1}{2}$ and $x>0$, define:%
\begin{eqnarray}
I_{1}^{\ast }\left( \kappa ,\mu ;x\right) &=&\int_{0}^{\infty }e^{-xt}t^{\mu
-\kappa -1/2}\left( 1+t\right) ^{\mu +\kappa -1/2}\ln \left( \frac{1+t}{t}%
\right) dt,  \label{I*1_def} \\
I_{2}^{\ast }\left( \kappa ,\mu ;x\right) &=&\int_{1}^{\infty }e^{-xt}t^{\mu
+\kappa -1/2}\left( t-1\right) ^{\mu -\kappa -1/2}\ln \left( \frac{t}{t-1}%
\right) dt.  \label{I*2_def}
\end{eqnarray}
\end{definition}

For $x>0$, differentiation of (\ref{W_k,mu_int_1}) and (\ref{W_k,mu_int_2})\
with respect to parameter $\kappa $ yields respectively%
\begin{eqnarray}
&&\frac{\partial \mathrm{W}_{\kappa ,\mu }\left( x\right) }{\partial \kappa }
\notag \\
&=&\psi \left( \mu -\kappa +\frac{1}{2}\right) \mathrm{W}_{\kappa ,\mu
}\left( x\right) +\frac{\,x^{\mu +1/2}e^{-x/2}}{\Gamma \left( \mu -\kappa +%
\frac{1}{2}\right) }I_{1}^{\ast }\left( \kappa ,\mu ;x\right)  \label{DkW_I1}
\\
&=&\psi \left( \mu -\kappa +\frac{1}{2}\right) \mathrm{W}_{\kappa ,\mu
}\left( x\right) +\frac{\,x^{\mu +1/2}e^{x/2}}{\Gamma \left( \mu -\kappa +%
\frac{1}{2}\right) }I_{2}^{\ast }\left( \kappa ,\mu ;x\right) .
\label{DkW_I2}
\end{eqnarray}

Note that, from (\ref{DkW_I1}) and (\ref{DkW_I2}), we have%
\begin{equation}
I_{2}^{\ast }\left( \kappa ,\mu ;x\right) =e^{-x}I_{1}^{\ast }\left( \kappa
,\mu ;x\right) .  \label{I1->I2}
\end{equation}

\begin{theorem}
The following integral holds true for $\frac{1}{2}+\mu -\kappa >0$ and $x>0$:%
\begin{eqnarray}
&&I_{1}^{\ast }\left( \kappa ,\mu ;x\right)  \label{I1s_general} \\
&=&\mathrm{B}\left( \frac{1}{2}+\mu -\kappa ,-2\mu \right)  \notag \\
&&\left\{ \left[ \psi \left( \frac{1}{2}-\mu -\kappa \right) -\psi \left(
\frac{1}{2}+\mu -\kappa \right) \right] \,_{1}F_{1}\left( \left.
\begin{array}{c}
\frac{1}{2}+\mu -\kappa \\
1+2\mu%
\end{array}%
\right\vert x\right) \right.  \notag \\
&&-\left. G^{\left( 1\right) }\left( \left.
\begin{array}{c}
\frac{1}{2}+\mu -\kappa \\
1+2\mu%
\end{array}%
\right\vert x\right) \right\} -\Gamma \left( 2\mu \right) \,x^{-2\mu
}\,G^{\left( 1\right) }\left( \left.
\begin{array}{c}
\frac{1}{2}-\mu -\kappa \\
1-2\mu%
\end{array}%
\right\vert x\right) ,  \notag
\end{eqnarray}%
where $\mathrm{B}\left( x,y\right) =\frac{\Gamma \left( x\right) \Gamma
\left( y\right) }{\Gamma \left( x+y\right) }$ denotes the beta function.
\end{theorem}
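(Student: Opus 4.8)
The plan is to derive (\ref{I1s_general}) by equating the two representations of $\partial \mathrm{W}_{\kappa ,\mu }\left( x\right) /\partial \kappa$ that are already on hand: the one coming from the Laplace-type integral, equation (\ref{DkW_I1}), and the one coming from direct differentiation of the series representation, equation (\ref{DkW_(1)}). Solving (\ref{DkW_I1}) for $I_{1}^{\ast }$ gives
\begin{equation*}
I_{1}^{\ast }\left( \kappa ,\mu ;x\right) =\frac{\Gamma \left( \mu -\kappa +\frac{1}{2}\right) }{x^{\mu +1/2}e^{-x/2}}\left[ \frac{\partial \mathrm{W}_{\kappa ,\mu }\left( x\right) }{\partial \kappa }-\psi \left( \mu -\kappa +\frac{1}{2}\right) \mathrm{W}_{\kappa ,\mu }\left( x\right) \right] ,
\end{equation*}
which is legitimate precisely when $\frac{1}{2}+\mu -\kappa >0$ and $x>0$, since that is the range in which (\ref{W_k,mu_int_1}), and hence (\ref{DkW_I1}), holds (differentiation under the integral sign being justified for $x>0$). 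The same inequality is exactly what makes the beta function $\mathrm{B}\left( \frac{1}{2}+\mu -\kappa ,-2\mu \right) $ on the right-hand side of (\ref{I1s_general}) well defined.

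Next I would substitute (\ref{DkW_(1)}) for $\partial \mathrm{W}_{\kappa ,\mu }/\partial \kappa $ and (\ref{W_k,mu_def}) for $\mathrm{W}_{\kappa ,\mu }$ into the bracket above. The two contributions proportional to $\psi \left( \frac{1}{2}+\mu -\kappa \right) \mathrm{M}_{\kappa ,-\mu }\left( x\right) $ cancel, and the part proportional to $\mathrm{M}_{\kappa ,\mu }\left( x\right) $ collapses to the combination $\left[ \psi \left( \frac{1}{2}-\mu -\kappa \right) -\psi \left( \frac{1}{2}+\mu -\kappa \right) \right] \mathrm{M}_{\kappa ,\mu }\left( x\right) $; what survives is this, together with the two $G^{\left( 1\right) }$ terms carried over from (\ref{DkW_(1)}).

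Finally I would multiply through by $\Gamma \left( \mu -\kappa +\frac{1}{2}\right) x^{-\mu -1/2}e^{x/2}$, use (\ref{M_k,mu_def}) to replace $\mathrm{M}_{\kappa ,\mu }\left( x\right) /\left( x^{\mu +1/2}e^{-x/2}\right) $ by the Kummer function $_{1}F_{1}\left( \left. \begin{array}{c} \frac{1}{2}+\mu -\kappa \\ 1+2\mu \end{array} \right\vert x\right) $, simplify $x^{1/2-\mu }/x^{1/2+\mu }=x^{-2\mu }$, and identify the prefactor $\Gamma \left( -2\mu \right) \Gamma \left( \frac{1}{2}+\mu -\kappa \right) /\Gamma \left( \frac{1}{2}-\mu -\kappa \right) $ with $\mathrm{B}\left( \frac{1}{2}+\mu -\kappa ,-2\mu \right) $, using $\left( \frac{1}{2}+\mu -\kappa \right) +\left( -2\mu \right) =\frac{1}{2}-\mu -\kappa $. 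Collecting the terms then yields exactly (\ref{I1s_general}).

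The computation is essentially bookkeeping, so I do not expect a genuine obstacle. The only points that require a little care are tracking the cancellation of the $\psi \left( \frac{1}{2}+\mu -\kappa \right) \mathrm{M}_{\kappa ,-\mu }$ terms, recognising the gamma quotient as a beta function, and checking that the single hypothesis $\frac{1}{2}+\mu -\kappa >0$ is exactly what is needed both for every quantity in the statement to be meaningful and for the interchange of differentiation and integration to be valid.
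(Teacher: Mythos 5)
Your proposal is correct and follows exactly the paper's own (much terser) proof: compare the integral-representation derivative \eqref{DkW_I1} with the series-representation derivative \eqref{DkW_(1)}, use \eqref{W_k,mu_def} and \eqref{M_k,mu_def}, and identify the gamma quotient as the beta function. The cancellation of the $\psi\left(\tfrac{1}{2}+\mu-\kappa\right)\mathrm{M}_{\kappa,-\mu}$ terms and the exponent bookkeeping $x^{1/2-\mu}/x^{1/2+\mu}=x^{-2\mu}$ both check out.
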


\begin{proof}
Compare (\ref{DkW_(1)})\ to (\ref{DkW_I1})\ and take into account (\ref%
{M_k,mu_def})\ to arrive at (\ref{I1s_general}), as we wanted to prove.
\end{proof}

Now, we derive a Lemma that will be applied throughout this Section and the
next one.

\begin{lemma}
For $\nu \geq 0$ and $x>0$, the following Laplace transform holds true:%
\begin{eqnarray}
&&\mathcal{I}_{\pm }\left( \nu ,x\right)  \label{I(nu,x)_resultado} \\
&=&\int_{0}^{\infty }e^{-xt}t^{\nu }\ln \left( t^{\pm 1}\left( 1+t\right)
\right) dt  \notag \\
&=&\frac{\Gamma \left( \nu +1\right) }{x^{\nu +1}}\left\{ \frac{x}{\nu +1}%
\,_{2}F_{2}\left( \left.
\begin{array}{c}
1,1 \\
2,2+\nu%
\end{array}%
\right\vert -x\right) \right.  \notag \\
&&-\left. e^{-i\pi \nu }\,\Gamma \left( -\nu ,x\right) \,\gamma \left( \nu
+1,-x\right) +\left( 1\pm 1\right) \left[ \psi \left( \nu +1\right) -\ln x%
\right] \right\} ,  \notag
\end{eqnarray}%
where $\Gamma \left( \nu ,z\right) $ and $\gamma \left( \nu ,z\right) $
denotes respectively the upper and lower incomplete gamma functions, (\ref%
{gamma_def})\ and (\ref{Gamma_def}).
\end{lemma}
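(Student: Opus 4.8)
The plan is to isolate the elementary part of the logarithm. Since $\ln\bigl(t^{\pm 1}(1+t)\bigr)=\pm\ln t+\ln(1+t)$, one has $\mathcal{I}_{\pm}(\nu,x)=\pm\,L(\nu,x)+J(\nu,x)$ with $L(\nu,x)=\int_{0}^{\infty}e^{-xt}t^{\nu}\ln t\,dt$ and $J(\nu,x)=\int_{0}^{\infty}e^{-xt}t^{\nu}\ln(1+t)\,dt$. Differentiating $\int_{0}^{\infty}e^{-xt}t^{\nu}\,dt=\Gamma(\nu+1)x^{-\nu-1}$ with respect to $\nu$ gives at once $L(\nu,x)=\Gamma(\nu+1)x^{-\nu-1}\bigl[\psi(\nu+1)-\ln x\bigr]$. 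Since, as will be seen, $J(\nu,x)$ itself contributes a further copy of $\psi(\nu+1)-\ln x$, the combination $\pm L+J$ produces the coefficient $1\pm 1$ in front of $\psi(\nu+1)-\ln x$ in (\ref{I(nu,x)_resultado}); everything thus reduces to evaluating $J(\nu,x)$.

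For $J(\nu,x)$ I would write $\ln(1+t)=-\partial_{\lambda}(1+t)^{-\lambda}\big|_{\lambda=0}$, interchange $\partial_{\lambda}$ with the integral (dominated convergence, $x>0$), and use the Laplace representation of the Tricomi function $\int_{0}^{\infty}e^{-xt}t^{\nu}(1+t)^{-\lambda}\,dt=\Gamma(\nu+1)\,\mathrm{U}(\nu+1,\nu+2-\lambda,x)$ together with Kummer's transformation $\mathrm{U}(\nu+1,\nu+2-\lambda,x)=x^{\lambda-\nu-1}\,\mathrm{U}(\lambda,\lambda-\nu,x)$, which yields $J(\nu,x)=-\Gamma(\nu+1)x^{-\nu-1}\,\partial_{\lambda}\bigl[x^{\lambda}\,\mathrm{U}(\lambda,\lambda-\nu,x)\bigr]_{\lambda=0}$. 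Next one inserts the connection formula
\[ \mathrm{U}(\lambda,\lambda-\nu,x)=\frac{\Gamma(1+\nu-\lambda)}{\Gamma(\nu+1)}\,{}_{1}F_{1}(\lambda;\lambda-\nu;x)+\frac{\Gamma(\lambda-\nu-1)}{\Gamma(\lambda)}\,x^{1+\nu-\lambda}\,{}_{1}F_{1}(\nu+1;2+\nu-\lambda;x), \]
valid for $\nu$ not a nonnegative integer (the integer case then follows by continuity in $\nu$, every object in (\ref{I(nu,x)_resultado}) being analytic there). Writing $1/\Gamma(\lambda)=\lambda/\Gamma(1+\lambda)$ exhibits the second summand as $\lambda\cdot(\text{analytic at }\lambda=0)$, so its $\lambda$-derivative at $0$ is $\Gamma(-\nu-1)\,x^{1+\nu}\,{}_{1}F_{1}(\nu+1;\nu+2;x)$, which equals $e^{-i\pi\nu}\Gamma(-\nu)\,\gamma(\nu+1,-x)$ after using $\gamma(a,z)=z^{a}a^{-1}\,{}_{1}F_{1}(a;a+1;-z)$, $(\nu+1)\Gamma(-\nu-1)=-\Gamma(-\nu)$, and $x^{\nu+1}/(-x)^{\nu+1}=-e^{-i\pi\nu}$ (principal branch, $x>0$). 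In the first summand, $\partial_{\lambda}\Gamma(1+\nu-\lambda)\big|_{\lambda=0}$ gives $-\psi(\nu+1)$, while termwise differentiation of $\,{}_{1}F_{1}(\lambda;\lambda-\nu;x)=\sum_{n\geq 0}\frac{(\lambda)_{n}}{(\lambda-\nu)_{n}\,n!}x^{n}$ at $\lambda=0$ gives $\sum_{n\geq 1}\frac{x^{n}}{n\,(-\nu)_{n}}$, because $(\lambda)_{n}=\lambda(1+\lambda)\cdots(n-1+\lambda)$ vanishes at $0$ with derivative $(n-1)!$. Adding the $\ln x$ from $\partial_{\lambda}x^{\lambda}\big|_{\lambda=0}$ and collecting,
\[ J(\nu,x)=\frac{\Gamma(\nu+1)}{x^{\nu+1}}\left[\psi(\nu+1)-\ln x-\sum_{n\geq 1}\frac{x^{n}}{n\,(-\nu)_{n}}-e^{-i\pi\nu}\Gamma(-\nu)\,\gamma(\nu+1,-x)\right]. \]

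It then remains to match this with the asserted closed form, which comes down to the power-series identity
\[ \frac{x}{\nu+1}\,{}_{2}F_{2}(1,1;2,2+\nu;-x)+\sum_{n\geq 1}\frac{x^{n}}{n\,(-\nu)_{n}}=-e^{-i\pi\nu}\,\gamma(\nu+1,-x)\,\gamma(-\nu,x). \]
Granting it, the splitting $\Gamma(-\nu)=\Gamma(-\nu,x)+\gamma(-\nu,x)$ turns the term $-e^{-i\pi\nu}\Gamma(-\nu)\gamma(\nu+1,-x)$ in the displayed $J$ into $-e^{-i\pi\nu}\Gamma(-\nu,x)\gamma(\nu+1,-x)$ together with $\frac{x}{\nu+1}\,{}_{2}F_{2}(1,1;2,2+\nu;-x)+\sum_{n\geq1}\frac{x^{n}}{n(-\nu)_{n}}$; the latter sum cancels the $-\sum$, and then $\mathcal{I}_{\pm}=\pm L+J$ reproduces (\ref{I(nu,x)_resultado}) verbatim. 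To prove the displayed identity I would expand $\gamma(\nu+1,-x)=(-x)^{\nu+1}\sum_{k\geq 0}\frac{x^{k}}{k!\,(\nu+1+k)}$ and $\gamma(-\nu,x)=x^{-\nu}\sum_{j\geq 0}\frac{(-1)^{j}x^{j}}{j!\,(j-\nu)}$, form the (absolutely convergent) Cauchy product, set $m=j+k+1$, apply the partial fraction $\frac{1}{(\nu+m-j)(j-\nu)}=\frac{1}{m}\left(\frac{1}{\nu+m-j}+\frac{1}{j-\nu}\right)$, and evaluate the two inner finite sums by the Beta-integral identity $\sum_{i=0}^{n}\binom{n}{i}\frac{(-1)^{i}}{c+i}=\frac{\Gamma(c)\,n!}{\Gamma(c+n+1)}$ (once with $c=\nu+1$, once with $c=-\nu$). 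Comparing with the left side, rewritten via $\frac{x}{\nu+1}\,{}_{2}F_{2}(1,1;2,2+\nu;-x)=\sum_{m\geq 1}\frac{(-1)^{m-1}x^{m}}{m\,(\nu+1)_{m}}$ and $(-\nu)_{m}^{-1}=(-1)^{m}\Gamma(\nu+1-m)/\Gamma(\nu+1)$, the match reduces to the reflection-formula consequence $(-1)^{m}\Gamma(\nu+1-m)\Gamma(m-\nu)=\Gamma(\nu+1)\Gamma(-\nu)$. The delicate point of the whole argument is the middle step — the controlled $\lambda\to 0$ passage through the apparently singular Kummer connection formula for $\mathrm{U}$ and the bookkeeping that converts the Tricomi parameter-derivative into incomplete gamma functions; the final hypergeometric identity, forbidding as it looks, collapses at once once the Cauchy product and the Beta and reflection formulas are brought in.
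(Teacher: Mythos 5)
Your proof is correct, and although it shares the paper's skeleton---split the logarithm into $\pm\ln t+\ln(1+t)$, evaluate the $\ln t$ piece by differentiating $\int_0^\infty e^{-xt}t^\nu dt=\Gamma(\nu+1)x^{-\nu-1}$ with respect to $\nu$, and recombine to produce the factor $1\pm1$---the two substantive steps are carried out by genuinely different means. The paper evaluates $\int_0^\infty e^{-xt}t^\nu\ln(1+t)\,dt$ by quoting a tabulated Laplace transform from Prudnikov (while correcting a sign error there), and then converts the resulting $_{2}F_{2}(\cdots;x)$ into $_{2}F_{2}(\cdots;-x)$ plus a product of incomplete gamma functions by quoting a second tabulated transformation. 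You instead derive the $\ln(1+t)$ integral from scratch via $\ln(1+t)=-\partial_{\lambda}(1+t)^{-\lambda}\big|_{\lambda=0}$, the Tricomi integral representation, Kummer's $\mathrm{U}$-transformation and the $\mathrm{U}$-to-$_{1}F_{1}$ connection formula; I checked the branch bookkeeping $x^{\nu+1}/(-x)^{\nu+1}=-e^{-i\pi\nu}$ and the termwise $\lambda$-derivative of $_{1}F_{1}(\lambda;\lambda-\nu;x)$, and your intermediate expression for $J$ agrees exactly with the paper's $\mathcal{I}_{a}$ after noting $\sum_{n\geq1}x^{n}/\bigl(n(-\nu)_{n}\bigr)=-\frac{x}{\nu}\,_{2}F_{2}(1,1;2,1-\nu;x)$. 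Likewise, your "forbidding" power-series identity is precisely the paper's equation (\ref{2F2_transform}) in disguise, and your Cauchy-product/partial-fraction verification of it is sound (the coefficient of $x^{m}$ on both sides is $\frac{1}{m}\bigl[(-1)^{m-1}\Gamma(\nu+1)/\Gamma(\nu+m+1)+\Gamma(-\nu)/\Gamma(m-\nu)\bigr]$). Your route is longer but self-contained, avoids reliance on tables the authors themselves flag as error-prone, and is actually more careful than the paper about non-negative integer $\nu$: you prove the identity for non-integer $\nu$ and pass to the limit in the final, manifestly continuous formula rather than in the singular intermediate expressions involving $\Gamma(-\nu)$ and $(-\nu)_{n}$.
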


\begin{proof}
Split the integral in two terms as follows:%
\begin{equation*}
\mathcal{I}_{\pm }\left( \nu ,x\right) =\underset{\mathcal{I}_{a}\left( \nu
,x\right) }{\underbrace{\int_{0}^{\infty }e^{-xt}t^{\nu }\ln \left(
1+t\right) dt}}\pm \underset{\mathcal{I}_{b}\left( \nu ,x\right) }{%
\underbrace{\int_{0}^{\infty }e^{-xt}t^{\nu }\ln t\,dt}},
\end{equation*}%
and apply the Laplace transform for $x>0$ \cite[Eqn. 2.5.2(4)]%
{prudnikov1986integrals}\footnote{%
It is worth noting that there is an incorrect sign in the reference cited.}:%
\begin{eqnarray*}
&&\int_{0}^{\infty }e^{-xt}t^{\nu }\ln \left( at+b\right) dt \\
&=&-\frac{\pi }{\left( \nu +1\right) \sin \pi \nu }\left( \frac{b}{a}\right)
^{\nu +1}\,_{1}F_{1}\left( \left.
\begin{array}{c}
\nu +1 \\
\nu +2%
\end{array}%
\right\vert \frac{b\,x}{a}\right) \\
&&+\frac{\Gamma \left( \nu +1\right) }{x^{\nu +1}}\left[ \psi \left( \nu
+1\right) -\ln \left( \frac{x}{a}\right) +\frac{b\,x}{a\,\nu }%
\,_{2}F_{2}\left( \left.
\begin{array}{c}
1,1 \\
2,1-\nu%
\end{array}%
\right\vert \frac{b\,x}{a}\right) \right] ,
\end{eqnarray*}%
to obtain

\begin{eqnarray}
&&\mathcal{I}_{a}\left( \nu ,x\right)  \label{I*1a_(1)} \\
&=&-\frac{\pi }{\left( \nu +1\right) \sin \pi \nu }\,_{1}F_{1}\left( \left.
\begin{array}{c}
\nu +1 \\
\nu +2%
\end{array}%
\right\vert x\right)  \notag \\
&&+\frac{\Gamma \left( \nu +1\right) }{x^{\nu +1}}\left[ \psi \left( \nu
+1\right) -\ln x+\frac{x}{\nu }\,_{2}F_{2}\left( \left.
\begin{array}{c}
1,1 \\
2,1-\nu%
\end{array}%
\right\vert x\right) \right] ,  \notag
\end{eqnarray}%
and%
\begin{equation}
\mathcal{I}_{b}\left( \nu ,x\right) =\frac{\Gamma \left( \nu +1\right) }{%
x^{\nu +1}}\left[ \psi \left( \nu +1\right) -\ln x\right] .
\label{I*1b_resultado}
\end{equation}%
Note that, according to Kummer's transformation (\ref{Kummer_transform}),\
and to the reduction formula \cite[Eqn. 7.11.1(14)]{prudnikov1986integrals}:%
\begin{equation*}
_{1}F_{1}\left( \left.
\begin{array}{c}
1 \\
b%
\end{array}%
\right\vert z\right) =\left( b-1\right) z^{1-b}e^{z}\gamma \left(
1-b,z\right) ,
\end{equation*}%
we have for $x>0$%
\begin{eqnarray}
_{1}F_{1}\left( \left.
\begin{array}{c}
a \\
a+1%
\end{array}%
\right\vert x\right) &=&e^{x}\,_{1}F_{1}\left( \left.
\begin{array}{c}
1 \\
a+1%
\end{array}%
\right\vert -x\right)  \label{1F1_reduction_gamma} \\
&=&a\left( -x\right) ^{-a}\gamma \left( a,-x\right)  \notag \\
&=&a\,e^{-i\pi a}x^{-a}\gamma \left( a,-x\right) ,  \notag
\end{eqnarray}%
thus (\ref{I*1a_(1)})\ becomes%
\begin{eqnarray}
&&\mathcal{I}_{a}\left( \nu ,x\right)  \label{I*1a_resultado} \\
&=&\frac{1}{x^{\nu +1}}\left\{ \frac{\pi }{\sin \pi \nu }e^{-i\pi \nu
}\gamma \left( \nu +1,-x\right) \right.  \notag \\
&&+\left. \Gamma \left( \nu +1\right) \left[ \psi \left( \nu +1\right) -\ln
x+\frac{x}{\nu }\,_{2}F_{2}\left( \left.
\begin{array}{c}
1,1 \\
2,1-\nu%
\end{array}%
\right\vert x\right) \right] \right\} .  \notag
\end{eqnarray}%
Now, insert (\ref{I*1b_resultado})\ and (\ref{I*1a_resultado})\ in (\ref%
{I*1_(1)})\ to arrive at
\begin{eqnarray}
&&\mathcal{I}_{\pm }\left( \nu ,x\right) =\frac{1}{x^{\nu +1}}
\label{I(nu,x)_2} \\
&&\left\{ \frac{\pi }{\sin \pi \nu }e^{-i\pi \nu }\gamma \left( \nu
+1,-x\right) +x\,\Gamma \left( \nu \right) \,_{2}F_{2}\left( \left.
\begin{array}{c}
1,1 \\
2,1-\nu%
\end{array}%
\right\vert x\right) \right\}  \notag \\
&&+\left( 1\pm 1\right) \frac{\Gamma \left( \nu +1\right) }{x^{\nu +1}}\left[
\psi \left( \nu +1\right) -\ln x\right] .  \notag
\end{eqnarray}%
Next, apply the transformation formula \cite[Eqn. 7.12.1(7)]%
{prudnikov1986integrals}:%
\begin{eqnarray*}
&&_{2}F_{2}\left( \left.
\begin{array}{c}
1,a \\
a+1,b%
\end{array}%
\right\vert z\right) +\frac{b-1}{a-b+1}\,_{2}F_{2}\left( \left.
\begin{array}{c}
1,a \\
a+1,2+a-b%
\end{array}%
\right\vert -z\right) \\
&=&\frac{a}{a-b+1}\,_{1}F_{1}\left( \left.
\begin{array}{c}
a-b+1 \\
a-b+2%
\end{array}%
\right\vert z\right) \,_{1}F_{1}\left( \left.
\begin{array}{c}
b-1 \\
b%
\end{array}%
\right\vert -z\right) ,
\end{eqnarray*}%
taking $a=1$ and $b=1-\nu $, and applying again (\ref{1F1_reduction_gamma}),
to arrive at%
\begin{eqnarray}
&&_{2}F_{2}\left( \left.
\begin{array}{c}
1,1 \\
2,1-\nu%
\end{array}%
\right\vert x\right)  \label{2F2_transform} \\
&=&\nu \left\{ \frac{1}{\nu +1}\,_{2}F_{2}\left( \left.
\begin{array}{c}
1,1 \\
2,2+\nu%
\end{array}%
\right\vert -x\right) +\frac{e^{-i\pi \nu }}{x}\gamma \left( -\nu ,x\right)
\,\gamma \left( \nu +1,-x\right) \right\} .  \notag
\end{eqnarray}%
Insert (\ref{2F2_transform})\ in (\ref{I(nu,x)_2})\ to get%
\begin{eqnarray}
&&\mathcal{I}_{\pm }\left( \nu ,x\right)  \label{I(nu,x)_(1)} \\
&=&\frac{1}{x^{\nu +1}}\left\{ e^{-i\pi \nu }\gamma \left( \nu +1,-x\right) %
\left[ \frac{\pi }{\sin \pi \nu }+\Gamma \left( \nu +1\right) \gamma \left(
-\nu ,x\right) \right] \right.  \notag \\
&&+\left. \Gamma \left( \nu +1\right) \left[ \frac{x\,}{\nu +1}%
\,_{2}F_{2}\left( \left.
\begin{array}{c}
1,1 \\
2,1-\nu%
\end{array}%
\right\vert x\right) +\left( 1\pm 1\right) \left[ \psi \left( \nu +1\right)
-\ln x\right] \right] \right\} .  \notag
\end{eqnarray}%
Applying the properties \cite[Eqn. 45:0:1]{oldham2009atlas}
\begin{equation}
\Gamma \left( \nu \right) =\gamma \left( \nu ,z\right) +\Gamma \left( \nu
,z\right) ,  \label{Gamma=Lower+Upper}
\end{equation}%
and \cite[Eqn. 1.2.2]{lebedev1965special}
\begin{equation*}
\Gamma \left( z\right) \Gamma \left( 1-z\right) =\pi \csc \pi z,
\end{equation*}%
rewrite (\ref{I(nu,x)_(1)})\ as (\ref{I(nu,x)_resultado}), as we wanted to
prove.
\end{proof}

\begin{theorem}
The following integral holds true for $\mu >0$ and $x>0$:%
\begin{eqnarray}
&&I_{1}^{\ast }\left( \frac{1}{2}-\mu ,\mu ;x\right)  \label{I*1_reduction}
\\
&=&\mathcal{I}_{-}\left( 2\mu -1,x\right)  \label{I(2*mu-1,x)} \\
&=&\frac{\Gamma \left( 2\mu \right) }{x^{2\mu }}\left\{ \frac{\,x}{2\mu }%
\,_{2}F_{2}\left( \left.
\begin{array}{c}
1,1 \\
2,1+2\mu%
\end{array}%
\right\vert -x\right) +e^{-2\pi i\mu }\,\Gamma \left( 1-2\mu ,x\right)
\,\gamma \left( 2\mu ,-x\right) \right\} .  \notag
\end{eqnarray}
\end{theorem}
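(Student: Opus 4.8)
The plan is to derive both equalities almost mechanically: the first by specializing the definition of $I_{1}^{\ast }$, the second by invoking the Lemma just proved.

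For the first equality, substitute $\kappa =\frac{1}{2}-\mu $ into the defining integral (\ref{I*1_def}). The exponent of $t$ becomes $\mu -\kappa -\frac{1}{2}=2\mu -1$, while the exponent of $1+t$ becomes $\mu +\kappa -\frac{1}{2}=0$, so that factor drops out entirely. Since $\ln \left( \frac{1+t}{t}\right) =\ln \left( t^{-1}\left( 1+t\right) \right) $, the resulting integral $\int_{0}^{\infty }e^{-xt}t^{2\mu -1}\ln \left( t^{-1}\left( 1+t\right) \right) dt$ is exactly $\mathcal{I}_{-}\left( \nu ,x\right) $ of the Lemma with $\nu =2\mu -1$ and the lower sign; this establishes $I_{1}^{\ast }\left( \frac{1}{2}-\mu ,\mu ;x\right) =\mathcal{I}_{-}\left( 2\mu -1,x\right) $.

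For the closed form I would feed $\nu =2\mu -1$ and the lower sign into (\ref{I(nu,x)_resultado}). The term $\left( 1\pm 1\right) \left[ \psi \left( \nu +1\right) -\ln x\right] $ vanishes because $1+\left( -1\right) =0$; then $\Gamma \left( \nu +1\right) =\Gamma \left( 2\mu \right) $, $x^{\nu +1}=x^{2\mu }$, the lower ${}_{2}F_{2}$ parameters become $2,1+2\mu $, $\Gamma \left( -\nu ,x\right) =\Gamma \left( 1-2\mu ,x\right) $, and $\gamma \left( \nu +1,-x\right) =\gamma \left( 2\mu ,-x\right) $. The one point needing care is the phase factor: $e^{-i\pi \nu }=e^{-i\pi \left( 2\mu -1\right) }=-e^{-2\pi i\mu }$, which combines with the leading minus sign of the incomplete-gamma term in (\ref{I(nu,x)_resultado}) to produce the $+\,e^{-2\pi i\mu }\,\Gamma \left( 1-2\mu ,x\right) \,\gamma \left( 2\mu ,-x\right) $ appearing in the statement. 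Collecting the surviving terms gives the claimed expression.

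The main (and essentially the only) obstacle is a domain matter: the Lemma is stated for $\nu \geq 0$, i.e.\ $\mu \geq \frac{1}{2}$, whereas the theorem asserts $\mu >0$. I would close this gap by analytic continuation in $\mu $. For fixed $x>0$ the integral $I_{1}^{\ast }\left( \frac{1}{2}-\mu ,\mu ;x\right) $ is holomorphic in $\mu $ on $\left\{ \mathrm{Re}\,\mu >0\right\} $: near $t=0$ the integrand behaves like $t^{2\mu -1}\ln \left( 1/t\right) $, which is integrable once $\mathrm{Re}\left( 2\mu -1\right) >-1$, and the exponential controls $t\rightarrow \infty $. The right-hand side, read with the branches of $e^{-i\pi \nu }$ and of the incomplete gamma functions fixed as in the Lemma, is likewise analytic in $\mu $ with no singularities in that half-plane. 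Since the two sides coincide for $\mu \geq \frac{1}{2}$, they must coincide for all $\mu >0$ by the identity theorem, which would complete the proof.
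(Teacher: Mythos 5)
Your proof is correct and takes essentially the same route as the paper, whose entire argument is to specialize the definition of $I_{1}^{\ast}$ at $\kappa=\tfrac{1}{2}-\mu$ (so the $(1+t)$-power drops out) and then read off the Lemma at $\nu=2\mu-1$; your sign bookkeeping for $e^{-i\pi\nu}=-e^{-2\pi i\mu}$ is right. Your closing analytic-continuation step, extending the Lemma from $\nu\geq 0$ to $\nu=2\mu-1>-1$, fills a domain gap that the paper passes over in silence, and it is sound.
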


\begin{proof}
From (\ref{I*1_def}) and (\ref{I(nu,x)_resultado}), we obtain the desired
result.
\end{proof}

\begin{remark}
If we insert (\ref{I(nu,x)_2})\ in (\ref{I(2*mu-1,x)}), we obtain the
following alternative form:%
\begin{eqnarray}
&&I_{1}^{\ast }\left( \frac{1}{2}-\mu ,\mu ;x\right)
\label{I*1_reduction_bis} \\
&=&\frac{1}{x^{2\mu }}\left\{ \pi \left[ \cot \left( 2\pi \mu \right) -i%
\right] \,\gamma \left( 2\mu ,-x\right) +\,x\,\Gamma \left( 2\mu -1\right)
\,_{2}F_{2}\left( \left.
\begin{array}{c}
1,1 \\
2,2-2\mu%
\end{array}%
\right\vert x\right) \right\} .  \notag
\end{eqnarray}
\end{remark}

\begin{theorem}
The following reduction formula holds true for $-2\mu \neq 0,1,\ldots $ and $%
x>0$:%
\begin{eqnarray}
&&\left. \frac{\partial \mathrm{W}_{\kappa ,\mu }\left( x\right) }{\partial
\kappa }\right\vert _{\kappa =-\mu +1/2}=e^{-x/2}x^{1/2-\mu }
\label{DkW_-mu+1/2,mu} \\
&&\left\{ \psi \left( 2\mu \right) +\frac{x}{2\mu }\,_{2}F_{2}\left( \left.
\begin{array}{c}
1,1 \\
2,1+2\mu%
\end{array}%
\right\vert -x\right) +e^{-2\pi i\mu }\,\Gamma \left( 1-2\mu ,x\right)
\,\gamma \left( 2\mu ,-x\right) \right\} .  \notag
\end{eqnarray}
\end{theorem}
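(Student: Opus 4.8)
The plan is to read the value of $\partial_{\kappa}\mathrm{W}_{\kappa,\mu}(x)$ at $\kappa=\tfrac{1}{2}-\mu$ directly off the integral–representation identity (\ref{DkW_I1}), feeding in the closed form of $I_{1}^{\ast}$ at that parameter pair already obtained in (\ref{I*1_reduction}). Since $\kappa=\tfrac{1}{2}-\mu$ gives $\mu-\kappa+\tfrac{1}{2}=2\mu$, identity (\ref{DkW_I1}) specializes to
\begin{equation*}
\left.\frac{\partial \mathrm{W}_{\kappa,\mu}(x)}{\partial \kappa}\right|_{\kappa=1/2-\mu}=\psi\left(2\mu\right)\mathrm{W}_{1/2-\mu,\mu}(x)+\frac{x^{\mu+1/2}e^{-x/2}}{\Gamma\left(2\mu\right)}\,I_{1}^{\ast}\left(\tfrac{1}{2}-\mu,\mu;x\right).
\end{equation*}

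First I would simplify the Whittaker value in the first term: the reduction formula (\ref{W_k,k-1/2}) with $\kappa=\tfrac{1}{2}-\mu$ (so that $\kappa-\tfrac{1}{2}=-\mu$) gives $\mathrm{W}_{1/2-\mu,-\mu}(x)=e^{-x/2}x^{1/2-\mu}$, and the symmetry (\ref{W_mu=W_-mu}) upgrades this to $\mathrm{W}_{1/2-\mu,\mu}(x)=e^{-x/2}x^{1/2-\mu}$ as well; hence the first term is $\psi(2\mu)\,e^{-x/2}x^{1/2-\mu}$. Next I would substitute the closed form (\ref{I*1_reduction}) for $I_{1}^{\ast}\left(\tfrac{1}{2}-\mu,\mu;x\right)$: multiplying it by the prefactor $x^{\mu+1/2}e^{-x/2}/\Gamma(2\mu)$ cancels the factor $\Gamma(2\mu)$ and merges $x^{\mu+1/2}$ with $x^{-2\mu}$ into $x^{1/2-\mu}$, so that the second term becomes
\begin{equation*}
e^{-x/2}x^{1/2-\mu}\left\{\frac{x}{2\mu}\,_{2}F_{2}\left(\left.\begin{array}{c}1,1\\2,1+2\mu\end{array}\right\vert-x\right)+e^{-2\pi i\mu}\,\Gamma\left(1-2\mu,x\right)\gamma\left(2\mu,-x\right)\right\}.
\end{equation*}
Adding the two contributions reproduces (\ref{DkW_-mu+1/2,mu}) verbatim.

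I do not expect a genuine computational obstacle here; the whole argument is bookkeeping on (\ref{DkW_I1}) and (\ref{I*1_reduction}). The one point that deserves attention is the range of $\mu$: the reduction (\ref{I*1_reduction}), and the Lemma underlying it, is established only for $\mu>0$ (the defining integral $I_{1}^{\ast}\left(\tfrac{1}{2}-\mu,\mu;x\right)$ converging for $\mu>\tfrac{1}{4}$), whereas the statement claims (\ref{DkW_-mu+1/2,mu}) for all $\mu$ with $-2\mu\notin\{0,1,2,\dots\}$. I would close that gap by analytic continuation in $\mu$: for fixed $x>0$ the left-hand side of (\ref{DkW_-mu+1/2,mu}) is an entire function of $\mu$, while on the right-hand side $\psi(2\mu)$, $\gamma(2\mu,-x)$ and ${}_{2}F_{2}(1,1;2,1+2\mu;-x)$ are analytic precisely off $2\mu\in\{0,-1,-2,\dots\}$ and $\Gamma(1-2\mu,x)$ is entire in $\mu$; the two sides agree on the half-line $\mu>0$, hence on the whole stated region.
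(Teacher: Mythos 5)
Your proposal is correct and follows essentially the same route as the paper: specialize (\ref{DkW_I1}) to $\kappa=\tfrac12-\mu$, use $\mathrm{W}_{1/2-\mu,\mu}(x)=e^{-x/2}x^{1/2-\mu}$, and substitute the closed form (\ref{I*1_reduction}) of $I_{1}^{\ast}\bigl(\tfrac12-\mu,\mu;x\bigr)$. Your closing remark on extending from $\mu>0$ to the full range $-2\mu\neq 0,1,\ldots$ by analytic continuation is a point the paper passes over silently, and is a worthwhile addition.
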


\begin{proof}
Insert in (\ref{DkW_I1}) the reduction formula \cite[Eqn. 13.18.2]%
{olver2010nist} with $\kappa =-\mu +1/2$, i.e.%
\begin{equation}
\mathrm{W}_{1/2-\mu ,\mu }\left( x\right) =e^{-x/2}x^{1/2-\mu },
\label{W_1/2-mu,mu}
\end{equation}%
and the result given in (\ref{I*1_reduction}) to arrive at (\ref%
{DkW_-mu+1/2,mu}).
\end{proof}

\begin{remark}
If we consider (\ref{I*1_reduction_bis}), we obtain the following
alternative form:\
\begin{eqnarray}
&&\left. \frac{\partial \mathrm{W}_{\kappa ,\mu }\left( x\right) }{\partial
\kappa }\right\vert _{\kappa =-\mu +1/2}=e^{-x/2}x^{1/2-\mu }
\label{DkW_-mu+1/2,mu_bis} \\
&&\left\{ \psi \left( 2\mu \right) +\frac{\pi \left[ \cot \left( 2\pi \mu
\right) -i\right] }{\Gamma \left( 2\mu \right) }\gamma \left( 2\mu
,-x\right) +\frac{x}{2\mu -1}\,_{2}F_{2}\left( \left.
\begin{array}{c}
1,1 \\
2,2-2\mu%
\end{array}%
\right\vert x\right) \,\right\} .  \notag
\end{eqnarray}
\end{remark}

Table \ref{Table_3}\ shows the first derivative of $\mathrm{W}_{\kappa ,\mu
}\left( x\right) $ with respect to parameter $\kappa $ for some particular
values of $\kappa $ and $\mu $, and $x>0$, calculated with the aid of
MATHEMATICA program\ from (\ref{DkW_-mu+1/2,mu_bis}).

\begin{center}
%TCIMACRO{\TeXButton{B}{\begin{table}[tbp] \centering}}%
%BeginExpansion
\begin{table}[tbp] \centering%
%EndExpansion
\caption{Derivative of $W_{\kappa,\mu}$ with respect
to $\kappa$ by using (\ref{DkW_-mu+1/2,mu_bis}).}%
\rotatebox{90}{
\begin{tabular}{|c|c|c|}
\hline
$\kappa $ & $\mu $ & $\frac{\partial \mathrm{W}_{\kappa ,\mu }\left(
x\right) }{\partial \kappa }\quad \left( x>0\right) $ \\ \hline\hline
$-\frac{1}{4}$ & $\pm \frac{3}{4}$ & $x^{-1/4}e^{-x/2}\left[ 2-\gamma -\ln
4-2\,e^{x}\sqrt{\pi \,x}+\pi \,\mathrm{erfi}\left( \sqrt{x}\right)
+2x\,_{2}F_{2}\left( 1,1;\frac{1}{2},2;x\right) \right] $ \\ \hline
$\frac{1}{4}$ & $\pm \frac{1}{4}$ & $x^{1/4}e^{-x/2}\left[ \pi \,\mathrm{erfi%
}\left( \sqrt{x}\right) -2x\,_{2}F_{2}\left( 1,1;\frac{3}{2},2;x\right)
-\gamma -\ln 4\right] $ \\ \hline
$\frac{3}{4}$ & $\pm \frac{1}{4}$ & $e^{-x/2}\left\{ x^{3/4}\left[ 2-\gamma
-\ln 4+\pi \,\mathrm{erfi}\left( \sqrt{x}\right) -\frac{2}{3}%
x\,_{2}F_{2}\left( 1,1;\frac{5}{2},2;x\right) \right] -\sqrt{\pi }%
\,x^{1/4}\,e^{x}\right\} $ \\ \hline
$\frac{5}{4}$ & $\pm \frac{3}{4}$ & $\frac{1}{30}x^{-1/4}e^{-x/2}\left\{
2x^{3/2}\left[ 40-15\gamma -30\ln 2+15\pi \,\mathrm{erfi}\left( \sqrt{x}%
\right) -12x\,_{2}F_{2}\left( 1,1;\frac{7}{2},2;x\right) \right] -15\sqrt{%
\pi }e^{x}\left( 2x+1\right) \right\} $ \\ \hline
\end{tabular}%
}
\label{Table_3}%
%TCIMACRO{\TeXButton{E}{\end{table}}}%
%BeginExpansion
\end{table}%
%EndExpansion
\end{center}

Notice that for $-2\mu =0,1,\ldots $, we obtain an indeterminate expression
in (\ref{DkW_-mu+1/2,mu}) and (\ref{DkW_-mu+1/2,mu_bis}). For these cases,
we present the following result.

\begin{theorem}
The following reduction formula holds true for $m=0,1,2,\ldots $:%
\begin{equation}
\left. \frac{\partial \mathrm{W}_{\kappa ,\mu }\left( x\right) }{\partial
\kappa }\right\vert _{\kappa =\left( 1+m\right) /2,\mu =\pm
m/2}=e^{-x/2}x^{\left( 1+m\right) /2}\left\{ \ln x-\sum_{k=1}^{m}x^{-k}\left[
e^{x}\,\Gamma \left( k\right) +\binom{m}{k}\,\gamma \left( k,-x\right) %
\right] \right\} .  \label{DkW_m}
\end{equation}
\end{theorem}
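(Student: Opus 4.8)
The plan is to recover this degenerate case as a limit of Theorem~\ref{Theorem_1}. Because $\mathrm{W}_{\kappa,\mu}(x)=\mathrm{W}_{\kappa,-\mu}(x)$ forces $\partial_{\kappa}\mathrm{W}_{\kappa,\mu}=\partial_{\kappa}\mathrm{W}_{\kappa,-\mu}$, it suffices to treat $\mu=m/2$, and then $\kappa=(1+m)/2=\mu+\tfrac12$ is precisely the configuration of (\ref{DkW_k=mu+1/2}). That identity was proved only for $2\mu\notin\mathbb{Z}$, whereas now $2\mu=m$; but by the Tricomi representation (\ref{W_def_Tricomi}) together with the fact that $\mathrm{U}(a,b,z)$ is entire in $a$ and $b$, the function $\mathrm{W}_{\kappa,\mu}(x)$ is entire in both parameters for fixed $x>0$, so $\partial_{\kappa}\mathrm{W}_{\kappa,\mu}(x)\vert_{\kappa=\mu+1/2}$ depends continuously on $\mu$. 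Hence the left-hand side of (\ref{DkW_m}) equals the limit, as $\mu\to m/2$ through values with $2\mu\notin\mathbb{Z}$, of the right-hand side of (\ref{DkW_k=mu+1/2}), and the proof reduces to evaluating that limit.

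Set $2\mu=m+2\epsilon$. In (\ref{DkW_k=mu+1/2}) the factors $\Gamma(2\mu+1)$ and $_2F_2(1,1;2\mu+2,2;x)$ stay regular as $\epsilon\to0$; the only singular pieces are $\psi(-2\mu)=\psi(-m-2\epsilon)$ and $\gamma(-2\mu,-x)=\gamma(-m-2\epsilon,-x)$, each with a simple pole. Iterating $\psi(z+1)=1/z+\psi(z)$ gives $\psi(-m-2\epsilon)=\tfrac1{2\epsilon}+\psi(m+1)+O(\epsilon)$; the series $\gamma(a,z)=\sum_{n\ge0}(-1)^{n}z^{a+n}/[n!(a+n)]$ gives the residue $(-1)^{m}/m!$ at $a=-m$ and, using $(-x)^{\pm k}=(-1)^{k}x^{\pm k}$ for integers $k$, the finite part $(-1)^{m}\bigl[\tfrac{\ln(-x)}{m!}+S_{+}-S_{-}\bigr]$, where $S_{+}=\sum_{k\ge1}x^{k}/[k\,(m+k)!]$ and $S_{-}=\sum_{k=1}^{m}x^{-k}/[k\,(m-k)!]$. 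Expanding $x^{\pm\mu}$, $(-x)^{2\mu}$ and $\Gamma(m+1+2\epsilon)$ to first order in $\epsilon$ and collecting terms, the $1/\epsilon$ poles cancel between the two contributions, the (non-real) $\ln(-x)$ terms cancel, and the $\psi(m+1)$ terms cancel — the one from $\psi(-2\mu)$ against the one produced by $\Gamma'(m+1)=m!\,\psi(m+1)$. What survives is
\[
\left.\frac{\partial\mathrm{W}_{\kappa,\mu}(x)}{\partial\kappa}\right\vert_{\kappa=(1+m)/2,\;\mu=m/2}=x^{(1+m)/2}e^{-x/2}\Bigl[\ln x-\tfrac{x}{m+1}\,_2F_2(1,1;m+2,2;x)+m!\,(S_{+}-S_{-})\Bigr].
\]

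Two elementary simplifications finish the argument. First, $_2F_2(1,1;m+2,2;x)=(m+1)!\sum_{n\ge0}x^{n}/[(n+1)(m+n+1)!]$, so $\tfrac{x}{m+1}\,_2F_2(1,1;m+2,2;x)=m!\,S_{+}$ and the infinite series disappears, while $m!\,S_{-}=\sum_{k=1}^{m}\binom{m}{k}(k-1)!\,x^{-k}$; thus the derivative equals $x^{(1+m)/2}e^{-x/2}\bigl[\ln x-\sum_{k=1}^{m}\binom{m}{k}\Gamma(k)\,x^{-k}\bigr]$. Second, to cast this in the form of (\ref{DkW_m}) one must check $\sum_{k=1}^{m}\binom{m}{k}\Gamma(k)\,x^{-k}=\sum_{k=1}^{m}x^{-k}\bigl[e^{x}\Gamma(k)+\binom{m}{k}\gamma(k,-x)\bigr]$; this is equivalent to $\sum_{k=1}^{m}\binom{m}{k}\Gamma(k,-x)\,x^{-k}=e^{x}\sum_{k=1}^{m}(k-1)!\,x^{-k}$, and writing $\Gamma(k,-x)=(k-1)!\,e^{x}\sum_{j=0}^{k-1}(-x)^{j}/j!$, cancelling $e^{x}$, and comparing coefficients of $x^{-p}$ for $1\le p\le m$ reduces it to the binomial identity $\sum_{k=p}^{m}(-1)^{k-p}\binom{m}{k}\binom{k-1}{p-1}=1$. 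This holds because $k\mapsto\binom{k-1}{p-1}$ is a polynomial of degree $p-1<m$, so $\sum_{k=0}^{m}(-1)^{k}\binom{m}{k}\binom{k-1}{p-1}=0$; isolating the $k=0$ term $\binom{-1}{p-1}=(-1)^{p-1}$ then yields the identity. Sanity checks: $m=0$ gives $\partial_{\kappa}\mathrm{W}\vert_{\kappa=1/2,\mu=0}=e^{-x/2}\sqrt{x}\,\ln x$, consistent with $\mathrm{W}_{1/2,0}(x)=e^{-x/2}\sqrt{x}$; $m=1$ gives $e^{-x/2}(x\ln x-1)$, matching Table~\ref{TableDkWmedio}.

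The main obstacle is the bookkeeping of finite parts through the $0\cdot\infty$ cancellations in the limit — ensuring that the $1/\epsilon$, the complex $\ln(-x)$, and the digamma terms all vanish — together with spotting the coincidence $\tfrac{x}{m+1}\,_2F_2(1,1;m+2,2;x)=m!\,S_{+}$ that removes the hypergeometric series. A cleaner alternative avoids (\ref{DkW_k=mu+1/2}) entirely by differentiating the Laplace-type representation (\ref{W_k,mu_int_1}) at its boundary value $s:=\tfrac12+\mu-\kappa=0$: with $\mathrm{W}_{\kappa,m/2}(x)=x^{(1+m)/2}e^{-x/2}F(s)/\Gamma(s)$ and $F(s)=\int_{0}^{\infty}e^{-xt}t^{s-1}(1+t)^{m-s}\,dt=\sum_{k=0}^{m}\binom{m}{k}\Gamma(s+k)x^{-s-k}+O(s)$ (the $\ln(1+t)$ correction being $O(s)$), one gets $\partial_{\kappa}\mathrm{W}=-x^{(1+m)/2}e^{-x/2}\bigl(\gamma+\lim_{s\to0}[F(s)-1/s]\bigr)=x^{(1+m)/2}e^{-x/2}\bigl[\ln x-\sum_{k=1}^{m}\binom{m}{k}(k-1)!\,x^{-k}\bigr]$, landing at the same place and again needing the binomial identity above.
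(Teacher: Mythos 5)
Your proof is correct, but it follows a genuinely different route from the paper's. The paper obtains (\ref{DkW_m}) by letting $\nu=2\mu\to-m$ in the $\kappa=\tfrac12-\mu$ reduction (\ref{DkW_-mu+1/2,mu_bis}); there the ${}_2F_2\left(1,1;2,2+m;x\right)$ survives the limit and must then be reduced by the Appendix formula (\ref{2F2_reduction}) together with the expansion (\ref{Gamma(-m,z)}) of $\Gamma(-m,z)$ before the stated right-hand side emerges. You instead let $2\mu\to+m$ in Theorem \ref{Theorem_1}'s $\kappa=\mu+\tfrac12$ formula (\ref{DkW_k=mu+1/2}); after the pole, $\ln(-x)$ and $\psi(m+1)$ cancellations (which I checked — they do work out exactly as you claim), the observation that $\tfrac{x}{m+1}\,{}_2F_2(1,1;m+2,2;x)=m!\,S_{+}$ kills the infinite series outright and lands you on the elementary form $e^{-x/2}x^{(1+m)/2}\bigl[\ln x-\sum_{k=1}^{m}\binom{m}{k}\Gamma(k)x^{-k}\bigr]$, which is precisely the Buschman-derived expression (\ref{DkW_m_Buschmann}); your binomial identity then shows this coincides with the stated right-hand side of (\ref{DkW_m}). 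Your route buys two things the paper's does not make explicit: a justification of the limit interchange (joint analyticity of $\mathrm{U}(a,b,z)$ in its parameters via (\ref{W_def_Tricomi})), and a transparent demonstration that the incomplete-gamma form of (\ref{DkW_m}) is just a rewriting of the elementary sum — something the paper only exhibits indirectly through Remark \ref{Remark: Buschmann_1}. Your closing alternative via the Laplace representation (\ref{W_k,mu_int_1}) at $s=\tfrac12+\mu-\kappa=0$ is also sound and is arguably the shortest path of all three.
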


\begin{proof}
Take $\nu =2\mu $ in (\ref{DkW_-mu+1/2,mu_bis})\ and perform the limit $\nu
\rightarrow -m=0,-1,-2,\ldots $%
\begin{eqnarray}
&&\left. \frac{\partial \mathrm{W}_{\kappa ,\mu }\left( x\right) }{\partial
\kappa }\right\vert _{\kappa =\left( m+1\right) /2,\mu
=-m/2}=e^{-x/2}x^{\left( 1+m\right) /2}  \label{DkW_m_(1)} \\
&&\left\{ \lim_{\nu \rightarrow -m}\left[ \psi \left( \nu \right) +\frac{\pi %
\left[ \cot \left( \pi \nu \right) -i\right] }{\Gamma \left( \nu \right) }%
\gamma \left( \nu ,-x\right) \right] -\frac{x}{m+1}\,_{2}F_{2}\left( \left.
\begin{array}{c}
1,1 \\
2,2+m%
\end{array}%
\right\vert x\right) \right\} .  \notag
\end{eqnarray}%
On the one hand, let us prove the following asymptotic formulas for $\nu
\rightarrow -m=0,-1,-2,\ldots $
\begin{eqnarray}
\psi \left( \nu \right) &\approx &-\gamma +H_{m}-\frac{1}{\nu +m},
\label{Psi(nu)_asymp} \\
\pi \cot \left( \pi \nu \right) &\approx &\frac{1}{\nu +m},
\label{cot(x)_asymp} \\
\Gamma \left( \nu \right) &\approx &\frac{\left( -1\right) ^{m}}{m!}\frac{1}{%
\nu +m}.  \label{Gamma(nu)_asymp}
\end{eqnarray}%
In order to prove (\ref{Psi(nu)_asymp}), consider \cite[Eqn. 44:5:4]%
{oldham2009atlas}
\begin{eqnarray*}
\psi \left( \nu +m+1\right) &=&\psi \left( \nu \right) +\sum_{j=0}^{m}\frac{1%
}{\nu +j} \\
&=&\psi \left( \nu \right) +\sum_{j=1}^{m}\frac{1}{\nu +j-1}+\frac{1}{\nu +m}%
,
\end{eqnarray*}%
thus, knowing that \cite[Eqn. 1.3.6]{lebedev1965special}
\begin{equation}
\psi \left( 1\right) =-\gamma ,  \label{Psi(1)}
\end{equation}%
and performing the substitution $k=$ $j-m-1$, we have
\begin{eqnarray*}
\lim_{\nu \rightarrow -m}\psi \left( \nu \right) &=&\lim_{\nu \rightarrow
-m} \left[ -\gamma -\frac{1}{\nu +m}-\sum_{j=1}^{m}\frac{1}{j-m-1}\right] \\
&=&\lim_{\nu \rightarrow -m}\left[ -\gamma -\frac{1}{\nu +m}+H_{m}\right] ,
\end{eqnarray*}%
where $H_{n}=\sum_{k=1}^{n}\frac{1}{k}$ denotes the $n$-th harmonic number.
In order to prove (\ref{cot(x)_asymp}), note that $\cot x=\cot \left( x+\pi
\right) $ and for $x\in \left( -\pi ,\pi \right) $ we have the expansion
\cite[Eqn. 44:6:2]{oldham2009atlas}
\begin{equation*}
\cot x=\frac{1}{x}-\frac{x}{3}-\frac{x^{3}}{45}-\cdots
\end{equation*}%
Finally, notice that (\ref{Gamma(nu)_asymp})\ follows directly from \cite[%
Eqn. 1.1.5]{lebedev1965special}. Therefore, taking into account (\ref%
{Psi(nu)_asymp})-(\ref{Gamma(nu)_asymp}), and taking into account (\ref%
{Gamma=Lower+Upper}), we conclude
\begin{eqnarray}
&&\lim_{\nu \rightarrow -m}\left[ \psi \left( \nu \right) +\frac{\pi \left[
\cot \left( \pi \nu \right) -i\right] }{\Gamma \left( \nu \right) }\gamma
\left( \nu ,-x\right) \right]  \label{Limit_resultado} \\
&=&H_{m}-\gamma -i\pi +\left( -1\right) ^{m+1}m!\,\Gamma \left( -m,-x\right)
.  \notag
\end{eqnarray}%
Insert (\ref{Limit_resultado})\ in (\ref{DkW_m_(1)})\ to arrive at
\begin{eqnarray}
&&\left. \frac{\partial \mathrm{W}_{\kappa ,\mu }\left( x\right) }{\partial
\kappa }\right\vert _{\kappa =\left( m+1\right) /2,\mu
=-m/2}=e^{-x/2}x^{\left( 1+m\right) /2}  \label{DkW_m_a} \\
&&\left\{ H_{m}-\gamma -i\pi +\left( -1\right) ^{m+1}m!\,\Gamma \left(
-m,-x\right) -\frac{x}{m+1}\,_{2}F_{2}\left( \left.
\begin{array}{c}
1,1 \\
2,2+m%
\end{array}%
\right\vert x\right) \right\} .  \notag
\end{eqnarray}%
On the other hand, consider the reduction formula (\ref{2F2_reduction}),
derived in the Appendix,
\begin{equation}
_{2}F_{2}\left( \left.
\begin{array}{c}
1,1 \\
2,2+m%
\end{array}%
\right\vert x\right) =\frac{m+1}{x}\left\{ H_{m}-\mathrm{Ein}\left(
-x\right) +\sum_{k=1}^{m}\binom{m}{k}x^{-k}\gamma \left( k,-x\right)
\right\} ,  \label{2F2_reduction_m}
\end{equation}%
and the formula \cite[Eqn. 8.4.15]{olver2010nist}%
\begin{equation}
\Gamma \left( -m,z\right) =\frac{\left( -1\right) ^{m}}{m!}\left[ \mathrm{E}%
_{1}\left( z\right) -e^{-z}\sum_{k=0}^{m-1}\frac{\left( -1\right) ^{k}k!}{%
z^{k+1}}\right] ,  \label{Gamma(-m,z)}
\end{equation}%
where $\mathrm{E}_{1}\left( z\right) $ denotes the \textit{exponential
integral }\cite[Eqn. 6.2.1]{olver2010nist}, which is defined as
\begin{equation*}
\mathrm{E}_{1}\left( z\right) =\int_{z}^{\infty }\frac{e^{-t}}{t}dt,\quad
z\neq 0,
\end{equation*}%
where the path does not cross the negative real axis or pass throught the
origin. Also, consider the property \cite[Eqn. 6.2.4]{olver2010nist}%
\begin{equation}
\mathrm{E}_{1}\left( z\right) =\mathrm{Ein}\left( z\right) -\ln z-\gamma .
\label{E1(z)=Ein(z) -ln(z)-gamma}
\end{equation}%
Therefore, substituting (\ref{2F2_reduction_m})\ and (\ref{Gamma(-m,z)})\ in
(\ref{DkW_m_a}), and taking into account (\ref{E1(z)=Ein(z) -ln(z)-gamma}),
we arrive at (\ref{DkW_m}), as we wanted to prove.
\end{proof}

\begin{remark}
\label{Remark: Buschmann_1}It is worth noting that from \cite%
{buschman1974finite},
\begin{eqnarray}
&&\left. \frac{\partial \mathrm{W}_{\kappa ,\mu }\left( x\right) }{\partial
\kappa }\right\vert _{\kappa =\left( N+1\right) /2,\mu =M/2}=\mathrm{W}_{%
\frac{N+1}{2},\frac{N}{2}}\left( x\right) \ln x  \label{DkW_Buschmann} \\
&&+\sum_{k=1}^{\left( N+M\right) /2}\frac{\left( -1\right) ^{k}\left( \frac{%
N+M}{2}\right) !}{k\left( \frac{N+M}{2}-k\right) !}\mathrm{W}_{\frac{N+1}{2}%
-k,\frac{M}{2}}\left( x\right) +\sum_{k=1}^{\left( N-M\right) /2}\frac{%
\left( -1\right) ^{k}\left( \frac{N-M}{2}\right) !}{k\left( \frac{N-M}{2}%
-k\right) !}\mathrm{W}_{\frac{N+1}{2}-k,\frac{M}{2}}\left( x\right) ,  \notag
\end{eqnarray}%
where $-N\leq M\leq N$ and $M,N$ are integers of like parity, we can derive
an equivalent reduction formula to (\ref{DkW_m}). Indeed, taking $N=M=m$, (%
\ref{DkW_Buschmann})\ is reduced to%
\begin{equation}
\left. \frac{\partial \mathrm{W}_{\kappa ,\mu }\left( x\right) }{\partial
\kappa }\right\vert _{\kappa =\left( m+1\right) /2,\mu =m2}=\mathrm{W}_{%
\frac{m+1}{2},\frac{m}{2}}\left( x\right) \ln x+\sum_{k=1}^{m}\frac{\left(
-1\right) ^{k}\,m!}{k\left( m-k\right) !}\mathrm{W}_{\frac{m}{2}-k,\frac{m}{2%
}}\left( x\right) .  \label{Subs}
\end{equation}%
Note that from (\ref{W_k,k-1/2}), we have%
\begin{equation}
\mathrm{W}_{\frac{m+1}{2},\frac{m}{2}}\left( x\right) =e^{-x/2}x^{\left(
1+m\right) /2}.  \label{Subs_1}
\end{equation}%
Also, from (\ref{W_def_Tricomi})\ and the reduction formula for $n=0,1,...$
given in \cite[Eqn. 13.2.8]{olver2010nist}%
\begin{equation*}
\mathrm{U}\left( a,a+n+1,z\right) =z^{-a}\sum_{s=0}^{n}\binom{n}{s}\left(
a\right) _{s}z^{-s},
\end{equation*}%
we obtain%
\begin{equation}
\mathrm{W}_{\frac{m}{2}-k,\frac{m}{2}}\left( x\right) =\frac{%
e^{-x/2}x^{\left( 1+m\right) /2}x^{-k}}{\Gamma \left( k\right) }%
\sum_{s=0}^{m-k}\binom{m-k}{s}\Gamma \left( k+s\right) x^{-s}.
\label{Subs_2}
\end{equation}%
Therefore, susbtituting (\ref{Subs_1})\ and (\ref{Subs_2})\ in (\ref{Subs}),
and simplifying, we arrive at%
\begin{eqnarray}
&&\left. \frac{\partial \mathrm{W}_{\kappa ,\mu }\left( x\right) }{\partial
\kappa }\right\vert _{\kappa =\left( m+1\right) /2,\mu =m/2}  \notag \\
&=&e^{-x/2}x^{\left( 1+m\right) /2}\left[ \ln x+m!\sum_{k=1}^{m}\frac{\left(
-1\right) ^{k}}{k!}x^{-k}\sum_{s=0}^{m-k}\frac{\left( k+s-1\right) !}{%
s!\left( m-k-s\right) !}x^{-s}\right] .  \label{Sum_Buschmann}
\end{eqnarray}%
Perform the index substitution $s\rightarrow s+k$ and exchange the sum order
in (\ref{Sum_Buschmann}), to arrive at%
\begin{eqnarray}
&&\left. \frac{\partial \mathrm{W}_{\kappa ,\mu }\left( x\right) }{\partial
\kappa }\right\vert _{\kappa =\left( m+1\right) /2,\mu =m/2}  \notag \\
&=&e^{-x/2}x^{\left( 1+m\right) /2}\left[ \ln x+m!\sum_{s=1}^{m}\frac{x^{-s}%
}{s\left( m-s\right) !}\sum_{k=1}^{s}\binom{s}{k}\left( -1\right) ^{k}\right]
.  \label{Sum_Buschmann_2}
\end{eqnarray}%
By virtue of the binomial theorem, the inner sum in (\ref{Sum_Buschmann_2})\
is just $-1$, thus we finally obtain:%
\begin{equation}
\left. \frac{\partial \mathrm{W}_{\kappa ,\mu }\left( x\right) }{\partial
\kappa }\right\vert _{\kappa =\left( 1+m\right) /2,\mu =\pm
m/2}=e^{-x/2}x^{\left( 1+m\right) /2}\left[ \ln x-m!\sum_{k=1}^{m}\frac{%
x^{-k}}{k\left( m-k\right) !}\right] .  \label{DkW_m_Buschmann}
\end{equation}
\end{remark}

\begin{theorem}
For $n=0,1,2,\ldots $, and $x>0$, the following integral holds true:%
\begin{eqnarray}
&&I_{1}^{\ast }\left( \frac{n}{2},\frac{n+1}{2};x\right) =\frac{e^{x}\,%
\mathrm{Ein}\left( x\right) }{x^{n+1}}\Gamma \left( n+1,x\right)
+n!\sum_{k=0}^{n}\frac{x^{-k-1}}{\left( n-k\right) !}
\label{I*1_reduction_n} \\
&&\quad \left\{ \left( -1\right) ^{k+1}\Gamma \left( -k,x\right) \,\gamma
\left( k+1,-x\right) -H_{k}-\sum_{\ell =1}^{k}\binom{k}{\ell }\left(
-x\right) ^{-\ell }\gamma \left( \ell ,x\right) \right\} .  \notag
\end{eqnarray}
\end{theorem}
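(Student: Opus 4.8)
The plan is to specialise the defining integral (\ref{I*1_def}) and then identify the result as a finite binomial combination of the Laplace transforms $\mathcal{I}_-(\nu,x)$ already evaluated in the Lemma. Putting $\kappa=n/2$ and $\mu=(n+1)/2$ gives $\mu-\kappa-\tfrac12=0$ and $\mu+\kappa-\tfrac12=n$, so (\ref{I*1_def}) collapses to
\[
I_1^{\ast}\!\left(\tfrac{n}{2},\tfrac{n+1}{2};x\right)=\int_0^{\infty}e^{-xt}(1+t)^n\ln\!\left(\frac{1+t}{t}\right)dt .
\]
Expanding $(1+t)^n=\sum_{k=0}^{n}\binom{n}{k}t^{k}$ by the binomial theorem and taking the finite sum outside the integral (legitimate by linearity, each term converging for $k\ge 0$ and $x>0$) yields $I_1^{\ast}(\tfrac n2,\tfrac{n+1}2;x)=\sum_{k=0}^{n}\binom{n}{k}\,\mathcal{I}_-(k,x)$, where $\mathcal{I}_-(\nu,x)$ is precisely the integral whose closed form is (\ref{I(nu,x)_resultado}); the hypothesis $\nu\ge 0$ there is met since $k\ge 0$.

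Next I would substitute $\nu=k$ into (\ref{I(nu,x)_resultado}). The summand $(1\pm1)[\psi(\nu+1)-\ln x]$ vanishes for the lower sign, and $e^{-i\pi k}=(-1)^{k}$, so
\[
\mathcal{I}_-(k,x)=\frac{k!}{x^{k+1}}\left\{\frac{x}{k+1}\,{}_2F_2(1,1;2,2+k;-x)-(-1)^{k}\,\Gamma(-k,x)\,\gamma(k+1,-x)\right\}.
\]
I would then insert the appendix reduction formula (\ref{2F2_reduction_m}) with $m=k$ and with $x$ replaced by $-x$; this rewrites $\tfrac{x}{k+1}\,{}_2F_2(1,1;2,2+k;-x)$ as $\mathrm{Ein}(x)-H_k-\sum_{\ell=1}^{k}\binom{k}{\ell}(-x)^{-\ell}\gamma(\ell,x)$. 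Consequently $\mathcal{I}_-(k,x)$ splits into $k!\,\mathrm{Ein}(x)/x^{k+1}$ plus exactly the braced expression appearing in (\ref{I*1_reduction_n}), once $-(-1)^{k}$ is written as $(-1)^{k+1}$.

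Finally I would sum over $k$, using $\binom{n}{k}k!=n!/(n-k)!$. The braced-expression part immediately produces the sum $n!\sum_{k=0}^{n}x^{-k-1}(n-k)!^{-1}\{\cdots\}$ of the statement. The $\mathrm{Ein}$ part becomes $\mathrm{Ein}(x)\,n!\,x^{-(n+1)}\sum_{j=0}^{n}x^{j}/j!$ after reindexing $j=n-k$, and the elementary finite-sum identity $\Gamma(n+1,x)=n!\,e^{-x}\sum_{j=0}^{n}x^{j}/j!$ turns this into $e^{x}\,\mathrm{Ein}(x)\,\Gamma(n+1,x)/x^{n+1}$; adding the two pieces gives (\ref{I*1_reduction_n}).

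The step that requires care is not a computation but the legitimacy of using the Lemma at \emph{integer} $\nu=k$: some intermediate quantities in the Lemma's proof ($\pi/\sin\pi\nu$ and $\Gamma(-\nu)$) are singular at integers, yet the final closed form (\ref{I(nu,x)_resultado}) involves only $\Gamma(-k,x)$, $\gamma(k+1,-x)$ and a terminating-parameter ${}_2F_2$, all analytic near such $\nu$ for $x>0$, so (\ref{I(nu,x)_resultado}) persists at integers by continuity in $\nu$. The remaining routine points are the sign bookkeeping in the substitution $x\mapsto-x$ in (\ref{2F2_reduction_m}) and the empty-sum conventions $H_0=0$ and $\sum_{\ell=1}^{0}(\cdot)=0$ at $k=0$.
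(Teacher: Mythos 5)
Your proposal is correct and follows essentially the same route as the paper's own proof: specialize the integral, expand $(1+t)^n$ binomially to reduce to $\sum_{k=0}^{n}\binom{n}{k}\mathcal{I}_{-}(k,x)$, evaluate via (\ref{I(nu,x)_resultado}) and (\ref{2F2_reduction_m}), and collect the $\mathrm{Ein}$ terms with $\sum_{k=0}^{n}x^{-k}/(n-k)!=x^{-n}e^{x}\Gamma(n+1,x)/n!$. Your added remark on the validity of the Lemma's closed form at integer $\nu$ is a point of care the paper leaves implicit.
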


\begin{proof}
From (\ref{I*1_def}),\ we have
\begin{eqnarray*}
&&I_{1}^{\ast }\left( \mu -\frac{1}{2},\mu ;x\right) \\
&=&\int_{0}^{\infty }e^{-xt}\left( 1+t\right) ^{2\mu -1}\ln \left( \frac{1+t%
}{t}\right) dt,
\end{eqnarray*}%
thus, taking $\mu =\frac{n+1}{2}$ with $n=0,1,2,\ldots $ and applying the
binomial theorem, we get%
\begin{eqnarray}
I_{1}^{\ast }\left( \frac{n}{2},\frac{n+1}{2};x\right) &=&\sum_{k=0}^{n}%
\binom{n}{k}\int_{0}^{\infty }e^{-xt}t^{k}\ln \left( \frac{1+t}{t}\right) dt
\notag \\
&=&\sum_{k=0}^{n}\binom{n}{k}\mathcal{I}_{-}\left( k,x\right) .
\label{Suma_I(k,x)}
\end{eqnarray}%
Insert the result obtained in (\ref{I(nu,x)_resultado})\ for $\nu =k$ in (%
\ref{Suma_I(k,x)})\ to arrive at
\begin{eqnarray*}
&&I_{1}^{\ast }\left( \frac{n}{2},\frac{n+1}{2};x\right) =n!\sum_{k=0}^{n}%
\frac{x^{-k-1}}{\left( n-k\right) !} \\
&&\left\{ \left( -1\right) ^{k+1}\Gamma \left( -k,x\right) \,\gamma \left(
k+1,-x\right) +\frac{x}{k+1}\,_{2}F_{2}\left( \left.
\begin{array}{c}
1,1 \\
2,2+k%
\end{array}%
\right\vert -x\right) \right\} .
\end{eqnarray*}%
Now, take into account (\ref{2F2_reduction_m}), to get%
\begin{eqnarray}
&&I_{1}^{\ast }\left( \frac{n}{2},\frac{n+1}{2};x\right) =n!\sum_{k=0}^{n}%
\frac{x^{-k-1}}{\left( n-k\right) !}  \label{I*1_(1)} \\
&&\left\{ \left( -1\right) ^{k+1}\Gamma \left( -k,x\right) \,\gamma \left(
k+1,-x\right) -H_{k}+\mathrm{Ein}\left( x\right) -\sum_{\ell =1}^{k}\binom{k%
}{\ell }\left( -x\right) ^{-\ell }\gamma \left( \ell ,x\right) \right\} .
\notag
\end{eqnarray}%
Finally, note that using the \textit{exponential polynomial}, defined as%
\begin{equation*}
\mathrm{e}_{n}\left( x\right) =\sum_{k=0}^{n}\frac{x^{k}}{k!},
\end{equation*}%
and the property for $n=0,1,2,\ldots $ \cite[Eqn. 45:4:2]{oldham2009atlas}:%
\begin{equation*}
\Gamma \left( 1+n,x\right) =n!\,\mathrm{e}_{n}\left( x\right) \,e^{-x},
\end{equation*}%
we calculate the following finite sum as:
\begin{equation}
\sum_{k=0}^{n}\frac{x^{-k}}{\left( n-k\right) !}=x^{-n}\sum_{s=0}^{n}\frac{%
x^{s}}{s!}=\frac{x^{-n}e^{x}}{n!}\Gamma \left( 1+n,x\right) .
\label{Sum_Gamma}
\end{equation}%
Apply (\ref{Sum_Gamma}) to (\ref{I*1_(1)})\ in order to obtain (\ref%
{I*1_reduction_n}), as we wanted to prove.
\end{proof}

\begin{theorem}
For $n=0,1,2,\ldots $, the following reduction formula holds true:%
\begin{eqnarray}
&&\left. \frac{\partial \mathrm{W}_{\kappa ,\mu }\left( x\right) }{\partial
\kappa }\right\vert _{\kappa =n/2,\mu =\pm \left( n+1\right) /2}
\label{DkW_n/2,(n+1)/2} \\
&=&\,x^{-n/2}\,e^{x/2}\,\Gamma \left( 1+n,x\right) \left[ \mathrm{E}%
_{1}\left( x\right) +\ln x\right] +n!\,x^{n/2}e^{-x/2}  \notag \\
&&\sum_{k=0}^{n}\frac{x^{-k}}{\left( n-k\right) !}\left[ \left( -1\right)
^{k+1}\,\Gamma \left( -k,x\right) \,\gamma \left( k+1,-x\right)
-H_{k}-\sum_{\ell =1}^{k}\binom{k}{\ell }\left( -x\right) ^{-\ell }\gamma
\left( \ell ,x\right) \right] .  \notag
\end{eqnarray}
\end{theorem}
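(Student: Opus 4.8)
The plan is to specialize the integral-representation identity (\ref{DkW_I1}) at $\kappa = n/2$, $\mu = (n+1)/2$ and to substitute the evaluation of $I_1^*(n/2,(n+1)/2;x)$ already established in (\ref{I*1_reduction_n}). By (\ref{DkW_mu=DkW_-mu}) the $\kappa$-derivative is unchanged under $\mu\mapsto-\mu$, so it suffices to treat $\mu=(n+1)/2$. At these values $\mu-\kappa+\tfrac12=1$, hence $\psi(\mu-\kappa+\tfrac12)=\psi(1)=-\gamma$ by (\ref{Psi(1)}), $\Gamma(\mu-\kappa+\tfrac12)=1$, and $\mu+\tfrac12=\tfrac{n}{2}+1$, so that (\ref{DkW_I1}) reduces to
\begin{equation*}
\left.\frac{\partial \mathrm{W}_{\kappa,\mu}(x)}{\partial\kappa}\right|_{\kappa=n/2,\,\mu=(n+1)/2}=-\gamma\,\mathrm{W}_{n/2,(n+1)/2}(x)+x^{n/2+1}e^{-x/2}\,I_1^*\!\left(\tfrac{n}{2},\tfrac{n+1}{2};x\right).
\end{equation*}

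Next I would record a closed form for this particular Whittaker function. Evaluating the Laplace-type representation (\ref{W_k,mu_int_1}) at $\kappa=n/2$, $\mu=(n+1)/2$, where $\mu-\kappa-\tfrac12=0$ and $\mu+\kappa-\tfrac12=n$, gives $\mathrm{W}_{n/2,(n+1)/2}(x)=x^{n/2+1}e^{-x/2}\int_0^\infty e^{-xt}(1+t)^n\,dt$; the change of variable $u=1+t$ together with the definition of the upper incomplete gamma function then yields $\mathrm{W}_{n/2,(n+1)/2}(x)=x^{-n/2}e^{x/2}\,\Gamma(n+1,x)$. (Equivalently, this follows from (\ref{W_mu=W_-mu}) and from (\ref{W_k+n_reduction}) taken at $\kappa=-n/2$, using the elementary identity $L_n^{(-n-1)}(x)=(-1)^n e^{x}\Gamma(1+n,x)/n!$.)

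Then I would insert (\ref{I*1_reduction_n}) into the displayed identity and multiply through by $x^{n/2+1}e^{-x/2}$. The finite sum $n!\sum_{k=0}^n \tfrac{x^{-k-1}}{(n-k)!}\{\,\cdots\,\}$ appearing in (\ref{I*1_reduction_n}) reproduces verbatim the second line of (\ref{DkW_n/2,(n+1)/2}), while the leading term $x^{-n-1}e^{x}\,\mathrm{Ein}(x)\,\Gamma(n+1,x)$ of (\ref{I*1_reduction_n}) becomes $x^{-n/2}e^{x/2}\,\mathrm{Ein}(x)\,\Gamma(n+1,x)$. Writing $\mathrm{Ein}(x)=\mathrm{E}_1(x)+\ln x+\gamma$ by (\ref{E1(z)=Ein(z) -ln(z)-gamma}), the Euler-constant part of this last expression is $\gamma\,x^{-n/2}e^{x/2}\,\Gamma(n+1,x)=\gamma\,\mathrm{W}_{n/2,(n+1)/2}(x)$, which cancels the term $-\gamma\,\mathrm{W}_{n/2,(n+1)/2}(x)$, leaving precisely $x^{-n/2}e^{x/2}\,\Gamma(1+n,x)\big[\mathrm{E}_1(x)+\ln x\big]$, the first term of (\ref{DkW_n/2,(n+1)/2}). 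Collecting the two contributions proves the formula.

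The only genuine subtlety I anticipate is spotting that final cancellation: one has to notice that the digamma prefactor $\psi(1)=-\gamma$ coming from (\ref{DkW_I1}) is exactly what absorbs the Euler constant hidden inside $\mathrm{Ein}$ once $\mathrm{W}_{n/2,(n+1)/2}$ is rewritten through the incomplete gamma function. Everything else is routine bookkeeping with powers of $x$ and with the already-proved identities (\ref{DkW_I1}), (\ref{W_k,mu_int_1}) and (\ref{I*1_reduction_n}).
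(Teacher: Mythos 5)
Your proof is correct and follows essentially the same route as the paper: substitute the already-established evaluation (\ref{I*1_reduction_n}) of $I_{1}^{\ast }\left( \frac{n}{2},\frac{n+1}{2};x\right)$ into (\ref{DkW_I1}), use the reduction $\mathrm{W}_{n/2,(n+1)/2}(x)=x^{-n/2}e^{x/2}\Gamma(1+n,x)$, and absorb the Euler constant from $\mathrm{Ein}$ via $\psi(1)=-\gamma$ and (\ref{E1(z)=Ein(z) -ln(z)-gamma}). The only cosmetic difference is that you obtain the Whittaker reduction formula by evaluating the Laplace representation (\ref{W_k,mu_int_1}) directly, whereas the paper reads it off from the Tricomi-function form (\ref{W_def_Tricomi}) and $\mathrm{U}(1,2-a,z)=z^{a-1}e^{z}\Gamma(1-a,z)$; both yield (\ref{W_n/2_reduction}).
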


\begin{proof}
Applying (\ref{W_def_Tricomi}) and \cite[Eqn. 13.6.6]{olver2010nist}%
\begin{equation*}
\mathrm{U}\left( 1,2-a,z\right) =z^{a-1}e^{z}\,\Gamma \left( 1-a,z\right) ,
\end{equation*}%
see that for $n=0,1,2,\ldots $
\begin{equation}
\mathrm{W}_{n/2,\left( n+1\right) /2}\left( z\right) =z^{-n/2}e^{z/2}\Gamma
\left( 1+n,z\right) .  \label{W_n/2_reduction}
\end{equation}%
Taking into account (\ref{Psi(1)}) and (\ref{E1(z)=Ein(z) -ln(z)-gamma}),
insert (\ref{I*1_reduction_n})\ and (\ref{W_n/2_reduction})\ in (\ref{DkW_I1}%
)\ for $\kappa =\frac{n}{2}$ and $\mu =\frac{n+1}{2}$ to arrive at (\ref%
{DkW_n/2,(n+1)/2}), as we wanted to prove.
\end{proof}

\begin{theorem}
For $n=0,1,2,\ldots $, and $x>0$, the following integral holds true:%
\begin{eqnarray}
&&I_{1}^{\ast }\left( 0,n+\frac{1}{2};x\right) =\frac{n!e^{x/2}}{\sqrt{\pi }%
x^{n+1/2}}K_{n+1/2}\left( \frac{x}{2}\right) \,\mathrm{Ein}\left( x\right)
+\sum_{k=0}^{n}\binom{n}{k}\frac{\left( n+k\right) !}{x^{n+k+1}}
\label{I*1_reduction_0_n} \\
&&\left\{ \left( -1\right) ^{n+k+1}\Gamma \left( -n-k,x\right) \,\gamma
\left( n+k+1,-x\right) -H_{n+k}-\sum_{\ell =1}^{n+k}\binom{n+k}{\ell }\left(
-x\right) ^{-\ell }\gamma \left( \ell ,x\right) \right\} .  \notag
\end{eqnarray}
\end{theorem}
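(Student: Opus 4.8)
The plan is to evaluate $I_{1}^{\ast}\left(0,n+\tfrac12;x\right)$ term by term, reducing each piece to the basic Laplace transform $\mathcal{I}_{-}$ of the Lemma, in close parallel with the proof of (\ref{I*1_reduction_n}). First I would set $\kappa=0$ and $\mu=n+\tfrac12$ in the definition (\ref{I*1_def}); then both exponents $\mu-\kappa-\tfrac12$ and $\mu+\kappa-\tfrac12$ collapse to $n$, so that
\[
I_{1}^{\ast}\left(0,n+\tfrac12;x\right)=\int_{0}^{\infty}e^{-xt}\,t^{n}\left(1+t\right)^{n}\ln\!\left(\frac{1+t}{t}\right)dt .
\]
Expanding $t^{n}(1+t)^{n}=\sum_{k=0}^{n}\binom{n}{k}t^{n+k}$ by the binomial theorem and interchanging the finite sum with the integral gives $I_{1}^{\ast}\left(0,n+\tfrac12;x\right)=\sum_{k=0}^{n}\binom{n}{k}\,\mathcal{I}_{-}\left(n+k,x\right)$, which is the exact analogue of (\ref{Suma_I(k,x)}) with $k$ replaced by $n+k$.

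Next I would insert the Lemma's formula (\ref{I(nu,x)_resultado}) with $\nu=n+k\geq0$ (the term carrying the factor $1-1$ drops out for the ``$-$'' case), then eliminate the resulting $_{2}F_{2}\!\left(1,1;2,2+n+k\mid-x\right)$ by means of (\ref{2F2_reduction_m}) with $m=n+k$ and argument $-x$, and finally combine the incomplete gamma functions using (\ref{Gamma=Lower+Upper}) together with $e^{-i\pi(n+k)}=(-1)^{n+k}$ --- precisely the manipulations already carried out in the proof of (\ref{I*1_reduction_n}). This yields
\[
\mathcal{I}_{-}\left(n+k,x\right)=\frac{(n+k)!}{x^{n+k+1}}\Big\{(-1)^{n+k+1}\Gamma\left(-n-k,x\right)\gamma\left(n+k+1,-x\right)-H_{n+k}+\mathrm{Ein}\left(x\right)-\sum_{\ell=1}^{n+k}\binom{n+k}{\ell}(-x)^{-\ell}\gamma\left(\ell,x\right)\Big\} .
\]
Summing over $k$ and isolating the common $\mathrm{Ein}(x)$ term produces exactly the finite double sum displayed in (\ref{I*1_reduction_0_n}) together with the leftover contribution $\mathrm{Ein}(x)\sum_{k=0}^{n}\binom{n}{k}(n+k)!\,x^{-n-k-1}$.

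The remaining --- and really the only substantive --- step is to collapse this last finite sum into the advertised Macdonald factor. I would take $\kappa=0$, $\mu=n+\tfrac12$ in (\ref{W_def_Tricomi}) and apply the polynomial reduction $\mathrm{U}\left(a,a+n+1,z\right)=z^{-a}\sum_{s=0}^{n}\binom{n}{s}(a)_{s}z^{-s}$ recalled in Remark \ref{Remark: Buschmann_1}, with $a=n+1$ (so that $(n+1)_{s}=(n+s)!/n!$), obtaining $\mathrm{W}_{0,n+1/2}(x)=\frac{e^{-x/2}}{n!}\sum_{s=0}^{n}\binom{n}{s}(n+s)!\,x^{-s}$; comparing this with (\ref{W_0_mu}) gives $\sum_{k=0}^{n}\binom{n}{k}(n+k)!\,x^{-n-k-1}=\frac{n!\,e^{x/2}}{\sqrt{\pi}\,x^{n+1/2}}K_{n+1/2}\!\left(x/2\right)$. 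Substituting this identity back yields (\ref{I*1_reduction_0_n}). The main obstacle is precisely this identification; everything else is sign and index bookkeeping lifted verbatim from the preceding theorem. As an alternative to routing through $\mathrm{W}_{0,\mu}$, one could invoke directly the standard closed form $K_{n+1/2}(z)=\sqrt{\pi/(2z)}\,e^{-z}\sum_{k=0}^{n}\frac{(n+k)!}{k!\,(n-k)!}(2z)^{-k}$, which makes the collapse immediate but requires citing a half-integer Bessel identity not otherwise used in the paper.
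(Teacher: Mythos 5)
Your proposal is correct and follows essentially the same route as the paper: binomial expansion to $\sum_{k=0}^{n}\binom{n}{k}\mathcal{I}_{-}(n+k,x)$, insertion of the Lemma, the $_{2}F_{2}$ reduction from the Appendix, and finally the identification of the $\mathrm{Ein}(x)$ coefficient with $K_{n+1/2}$. The only (cosmetic) difference is in that last step: the paper simply cites the closed form $\sqrt{z/\pi}\,K_{n+1/2}(z/2)=e^{-z/2}\sum_{k=0}^{n}\frac{(n+k)!\,z^{-k}}{k!(n-k)!}$ from NIST (your ``alternative'' is in fact the paper's choice, and that identity \emph{is} used elsewhere in the paper), whereas you derive it via the Tricomi polynomial reduction and $\mathrm{W}_{0,\mu}(x)=\sqrt{x/\pi}\,K_{\mu}(x/2)$ --- an equally valid and self-contained verification.
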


\begin{proof}
Applying the binomial theorem to (\ref{I*1_def}) for $\kappa =0$ and $\mu =n+%
\frac{1}{2}$,\ we have%
\begin{eqnarray}
I_{1}^{\ast }\left( 0,n+\frac{1}{2};x\right) &=&\sum_{k=0}^{n}\binom{n}{k}%
\int_{0}^{\infty }e^{-xt}t^{n+k}\ln \left( \frac{1+t}{t}\right) dt  \notag \\
&=&\sum_{k=0}^{n}\binom{n}{k}\mathcal{I}_{-}\left( n+k,x\right)
\label{Suma_I(n+k,x)}
\end{eqnarray}%
Insert\ the result obtained in (\ref{I(nu,x)_resultado})\ for $\nu =n+k$ in (%
\ref{Suma_I(n+k,x)}) to get
\begin{eqnarray*}
&&I_{1}^{\ast }\left( 0,n+\frac{1}{2};x\right) =\sum_{k=0}^{n}\binom{n}{k}%
\frac{\left( n+k\right) !}{x^{n+k+1}} \\
&&\left\{ \left( -1\right) ^{n+k+1}\Gamma \left( -n-k,x\right) \,\gamma
\left( n+k+1,-x\right) +\frac{x}{n+k+1}\,_{2}F_{2}\left( \left.
\begin{array}{c}
1,1 \\
2,2+n+k%
\end{array}%
\right\vert -x\right) \right\} .
\end{eqnarray*}%
Now, take into account (\ref{2F2_reduction_m}), to obtain%
\begin{eqnarray*}
&&I_{1}^{\ast }\left( 0,n+\frac{1}{2};x\right) =\sum_{k=0}^{n}\binom{n}{k}%
\frac{\left( n+k\right) !}{x^{n+k+1}} \\
&&\left\{ \left( -1\right) ^{n+k+1}\Gamma \left( -n-k,x\right) \,\gamma
\left( n+k+1,-x\right) +\,\mathrm{Ein}\left( x\right) -H_{n+k}-\sum_{\ell
=1}^{n+k}\binom{n+k}{\ell }\left( -x\right) ^{-\ell }\gamma \left( \ell
,x\right) \right\} .
\end{eqnarray*}%
Finally, consider \cite[Eqns. 10.47.9,12]{olver2010nist}%
\begin{equation}
\sqrt{\frac{z}{\pi }}K_{n+1/2}\left( \frac{z}{2}\right) =\frac{z}{\pi }\,%
\mathrm{k}_{n}\left( \frac{z}{2}\right) =e^{-z/2}\sum_{k=0}^{n}\frac{\left(
n+k\right) !\,z^{-k}}{k!\left( n-k\right) !},  \label{Sum_BesselK_n+1/2}
\end{equation}%
where $\mathrm{k}_{n}\left( z\right) $ is the \textit{modified spherical
Bessel function of the second kind}, to arrive at the desired result.
\end{proof}

\begin{theorem}
For $n=0,1,2,\ldots $, the following reduction formula holds true:%
\begin{eqnarray}
&&\left. \frac{\partial \mathrm{W}_{\kappa ,\mu }\left( x\right) }{\partial
\kappa }\right\vert _{\kappa =0,\mu =\pm \left( n+1/2\right) }  \label{DkW0n}
\\
&=&\sqrt{\frac{x}{\pi }}K_{n+1/2}\left( \frac{x}{2}\right) \left[ H_{n}+%
\mathrm{E}_{1}\left( x\right) +\ln x\right] +e^{-x/2}\sum_{k=0}^{n}\frac{%
\left( n+k\right) !\,x^{-k}}{k!\left( n-k\right) !}  \notag \\
&&\quad \left[ \left( -1\right) ^{n+k+1}\,\Gamma \left( -n-k,x\right)
\,\gamma \left( n+k+1,-x\right) -H_{n+k}-\sum_{\ell =1}^{n+k}\binom{n+k}{%
\ell }\left( -x\right) ^{-\ell }\gamma \left( \ell ,x\right) \right] .
\notag
\end{eqnarray}
\end{theorem}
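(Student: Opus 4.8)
The plan is to specialize the integral representation of the derivative, equation~(\ref{DkW_I1}), to $\kappa=0$ and $\mu=n+\tfrac12$, and then insert the closed form for $I_1^{\ast}(0,n+\tfrac12;x)$ established in~(\ref{I*1_reduction_0_n}). Since by~(\ref{DkW_mu=DkW_-mu}) the quantity $\partial\mathrm{W}_{\kappa,\mu}(x)/\partial\kappa$ is even in $\mu$, it suffices to treat $\mu=n+\tfrac12$, and the case $\mu=-(n+\tfrac12)$ follows immediately; this is the origin of the $\pm$ sign in~(\ref{DkW0n}).

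First I would evaluate the elementary ingredients at $\kappa=0$, $\mu=n+\tfrac12$: here $\mu-\kappa+\tfrac12=n+1$, so $\Gamma(\mu-\kappa+\tfrac12)=n!$ and, by~(\ref{Psi(1)}) together with the recurrence $\psi(z+1)=z^{-1}+\psi(z)$, we get $\psi(\mu-\kappa+\tfrac12)=\psi(n+1)=H_n-\gamma$; moreover $x^{\mu+1/2}=x^{n+1}$, and by~(\ref{W_0_mu}) one has $\mathrm{W}_{0,n+1/2}(x)=\sqrt{x/\pi}\,K_{n+1/2}(x/2)$. Substituting these into~(\ref{DkW_I1}) gives
\begin{equation*}
\left.\frac{\partial\mathrm{W}_{\kappa,\mu}(x)}{\partial\kappa}\right|_{\kappa=0,\,\mu=n+1/2}=(H_n-\gamma)\sqrt{\frac{x}{\pi}}\,K_{n+1/2}\!\left(\frac{x}{2}\right)+\frac{x^{n+1}e^{-x/2}}{n!}\,I_1^{\ast}\!\left(0,n+\tfrac12;x\right).
\end{equation*}

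Next I would plug in~(\ref{I*1_reduction_0_n}). The leading term of $I_1^{\ast}$, namely $\dfrac{n!\,e^{x/2}}{\sqrt{\pi}\,x^{n+1/2}}K_{n+1/2}(x/2)\,\mathrm{Ein}(x)$, when multiplied by $x^{n+1}e^{-x/2}/n!$ collapses to $\sqrt{x/\pi}\,K_{n+1/2}(x/2)\,\mathrm{Ein}(x)$; adding this to $(H_n-\gamma)\sqrt{x/\pi}\,K_{n+1/2}(x/2)$ and using~(\ref{E1(z)=Ein(z) -ln(z)-gamma}) in the form $\mathrm{Ein}(x)-\gamma=\mathrm{E}_1(x)+\ln x$ turns the coefficient of $\sqrt{x/\pi}\,K_{n+1/2}(x/2)$ into the bracket $H_n+\mathrm{E}_1(x)+\ln x$. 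For the finite sum in~(\ref{I*1_reduction_0_n}), the factor $x^{n+1}e^{-x/2}/n!$ converts $\binom{n}{k}\dfrac{(n+k)!}{x^{n+k+1}}$ into $e^{-x/2}\dfrac{(n+k)!}{k!\,(n-k)!}\,x^{-k}$, which reproduces exactly the sum appearing in~(\ref{DkW0n}), completing the proof.

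The only point requiring care is this last piece of factorial bookkeeping --- matching $\dfrac{x^{n+1}}{n!}\binom{n}{k}\dfrac{(n+k)!}{x^{n+k+1}}$ with $\dfrac{(n+k)!}{k!\,(n-k)!}\,x^{-k}$ --- together with making sure the $\mathrm{Ein}\to\mathrm{E}_1$ rewriting is applied consistently so that the Euler--Mascheroni constant $\gamma$ cancels against the one implicit in $\psi(n+1)$; everything else is a direct substitution.
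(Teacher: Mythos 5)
Your proposal is correct and follows essentially the same route as the paper: substitute $\kappa=0$, $\mu=n+\tfrac12$ into (\ref{DkW_I1}), use (\ref{Psi(n+1)}), (\ref{W_0,n+1/2}) and (\ref{I*1_reduction_0_n}), and convert $\mathrm{Ein}$ to $\mathrm{E}_1$ via (\ref{E1(z)=Ein(z) -ln(z)-gamma}) so the $\gamma$'s cancel, with the evenness (\ref{DkW_mu=DkW_-mu}) handling $\mu=-(n+\tfrac12)$. The factorial bookkeeping you flag checks out exactly as you describe.
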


\begin{proof}
Take $\kappa =0$ and $\mu =n+\frac{1}{2}$ in (\ref{DkW_I1}), to obtain%
\begin{equation}
\left. \frac{\partial \mathrm{W}_{\kappa ,\mu }\left( x\right) }{\partial
\kappa }\right\vert _{\kappa =0,\mu =n+1/2}=\psi \left( n+1\right) \mathrm{W}%
_{0,n+1/2}\left( x\right) +\frac{x^{n+1}e^{-x/2}}{n!}\,I_{1}^{\ast }\left(
0,n+\frac{1}{2};x\right) .  \label{DkWn_1}
\end{equation}%
Consider \cite[Eqn. 1.3.7]{lebedev1965special}%
\begin{equation}
\psi \left( n+1\right) =-\gamma +H_{n},  \label{Psi(n+1)}
\end{equation}%
and \cite[Eqns. 13.18.9]{olver2010nist}%
\begin{equation}
\mathrm{W}_{0,n+1/2}\left( z\right) =\sqrt{\frac{z}{\pi }}K_{n+1/2}\left(
\frac{z}{2}\right) .  \label{W_0,n+1/2}
\end{equation}%
Substitute (\ref{I*1_reduction_0_n}), (\ref{Psi(n+1)})\ and (\ref{W_0,n+1/2}%
)\ in (\ref{DkWn_1}),\ and take into account (\ref{DkW_mu=DkW_-mu}) and (\ref%
{E1(z)=Ein(z) -ln(z)-gamma}),\ to arrive at (\ref{DkW0n}), as we wanted to
prove.
\end{proof}

Table \ref{Table_3A} shows the first derivative of $\mathrm{W}_{\kappa ,\mu
}\left( x\right) $ with respect to parameter $\kappa $ for some particular
values of $\kappa $ and $\mu $, calculated with the aid of MATHEMATICA\ from
(\ref{DkW_m}), (\ref{DkW_n/2,(n+1)/2}) and (\ref{DkW0n}).

\begin{center}
%TCIMACRO{\TeXButton{B}{\begin{table}[tbp] \centering}}%
%BeginExpansion
\begin{table}[tbp] \centering%
%EndExpansion
\caption{First derivative of $\mathrm{W}_{\kappa,\mu}(x)$ with respect to parameter $\kappa$ for particular values of $\kappa$ and
$\mu$.}%
\begin{tabular}{|c|c|c|}
\hline
$\kappa $ & $\mu $ & $\frac{\partial \mathrm{W}_{\kappa ,\mu }\left(
x\right) }{\partial \kappa }$ \\ \hline\hline
$0$ & $\pm \frac{1}{2}$ & $e^{-x/2}\left[ \ln x+e^{x}\,\Gamma \left(
0,x\right) \right] $ \\ \hline
$0$ & $\pm \frac{3}{2}$ & $x^{-1}e^{-x/2}\left\{ \left( x-2\right) e^{x}%
\left[ \mathrm{Chi}\left( x\right) -\mathrm{Shi}\left( x\right) \right]
+\left( x+2\right) \ln x+2\right\} $ \\ \hline
$0$ & $\pm \frac{5}{2}$ & $x^{-2}e^{-x/2}\left\{ \left( x^{2}+6x+12\right)
\ln x+18-\left( x^{2}-6x+12\right) e^{x}\left[ \mathrm{Chi}\left( x\right) -%
\mathrm{Shi}\left( x\right) \right] \right\} $ \\ \hline
$\frac{1}{2}$ & $0$ & $\sqrt{x}e^{-x/2}\ln x$ \\ \hline
$\frac{1}{2}$ & $\pm 1$ & $x^{-1/2}e^{-x/2}\left[ \left( x+1\right) \ln
x+e^{x}\,\Gamma \left( 0,x\right) \right] $ \\ \hline
$1$ & $\pm \frac{1}{2}$ & $e^{-x/2}\left( x\ln x-1\right) $ \\ \hline
$1$ & $\pm \frac{3}{2}$ & $x^{-1}e^{-x/2}\left\{ \left( x^{2}+2x+2\right)
\ln x-2\,e^{x}\left[ \mathrm{Chi}\left( x\right) -\mathrm{Shi}\left(
x\right) \right] -x\right\} $ \\ \hline
$\frac{3}{2}$ & $\pm 1$ & $x^{-1/2}e^{-x/2}\left( x^{2}\ln x-2x-1\right) $
\\ \hline
$\frac{3}{2}$ & $\pm 2$ & $x^{-3/2}e^{-x/2}\left\{ \left(
x^{3}+3x^{2}+6x+6\right) \ln x-2x^{2}-4-6\,e^{x}\left[ \mathrm{Chi}\left(
x\right) -\mathrm{Shi}\left( x\right) \right] \right\} $ \\ \hline
$2$ & $\pm \frac{3}{2}$ & $e^{-x/2}\left( x^{2}\ln x-3x-3-\frac{2}{x}\right)
$ \\ \hline
\end{tabular}%
\label{Table_3A}%
%TCIMACRO{\TeXButton{E}{\end{table}}}%
%BeginExpansion
\end{table}%
%EndExpansion
\end{center}

\subsection{Application to the calculation of infinite integrals}

Additional integral representations of the Whittaker function $\mathrm{W}%
_{\kappa ,\mu }\left( x\right) $ in terms of Bessel functions \cite[Sect.
7.4.2]{magnus2013formulas} are known:%
\begin{eqnarray}
&&\mathrm{W}_{\kappa ,\mu }\left( x\right)  \notag \\
&=&\frac{\,2\sqrt{x}e^{-x/2}}{\Gamma \left( \frac{1}{2}+\mu -\kappa \right)
\Gamma \left( \frac{1}{2}-\mu -\kappa \right) }\int_{0}^{\infty
}e^{-t}t^{-\kappa -1/2}K_{2\mu }\left( 2\sqrt{xt}\right) dt
\label{Int_W_BesselK} \\
&&\mathrm{Re}\left( \frac{1}{2}\pm \mu -\kappa \right) >0.  \notag
\end{eqnarray}

Let us introduce the following infinite logarithmic integral.

\begin{definition}
\begin{equation}
\mathcal{H}\left( \kappa ,\mu ;x\right) =\int_{0}^{\infty }e^{-t}t^{-\kappa
-1/2}K_{2\mu }\left( 2\sqrt{xt}\right) \ln t\,dt.  \label{H_def}
\end{equation}
\end{definition}

\begin{theorem}
For $\kappa ,\mu \in
%TCIMACRO{\U{211d} }%
%BeginExpansion
\mathbb{R}
%EndExpansion
$ with $\left\vert \mu \right\vert <\frac{1}{2}-\kappa $, the following
integral holds true:%
\begin{eqnarray}
&&\mathcal{H}\left( \kappa ,\mu ;x\right) =\frac{1}{2}\,\Gamma \left( \frac{1%
}{2}-\mu -\kappa \right)  \label{H->I*1} \\
&&\left\{ \frac{\Gamma \left( \frac{1}{2}+\mu -\kappa \right) \psi \left(
\frac{1}{2}-\mu -\kappa \right) }{\sqrt{x}\,e^{-x/2}}\mathrm{W}_{\kappa ,\mu
}\left( x\right) +x^{\mu }\,I_{1}^{\ast }\left( \kappa ,\mu ;x\right)
\right\} ,  \notag
\end{eqnarray}%
where $I_{1}^{\ast }\left( \kappa ,\mu ;x\right) $ is given by (\ref%
{I1s_general}).
\end{theorem}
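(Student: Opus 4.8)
The plan is to establish (\ref{H->I*1}) by differentiating the Bessel--$K$ integral representation (\ref{Int_W_BesselK}) with respect to the first parameter $\kappa $ and recognising $\mathcal{H}$ as, up to a sign, the $\kappa $--derivative of the integral occurring there. Set $J\left( \kappa \right) =\int_{0}^{\infty }e^{-t}t^{-\kappa -1/2}K_{2\mu }\left( 2\sqrt{xt}\right) dt$, so that (\ref{Int_W_BesselK}) reads $\mathrm{W}_{\kappa ,\mu }\left( x\right) =\frac{2\sqrt{x}\,e^{-x/2}\,J\left( \kappa \right) }{\Gamma \left( \frac{1}{2}+\mu -\kappa \right) \Gamma \left( \frac{1}{2}-\mu -\kappa \right) }$; I would begin by solving this identity for $J\left( \kappa \right) $.

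Next I would justify differentiation under the integral sign for $J\left( \kappa \right) $ throughout the range $\left\vert \mu \right\vert <\frac{1}{2}-\kappa $. Near $t=+\infty $ the factor $e^{-t}$ forces exponential decay, while near $t=0$ the Macdonald function obeys $K_{2\mu }\left( 2\sqrt{xt}\right) =O\left( t^{-\left\vert \mu \right\vert }\right) $ (up to a logarithmic correction when $\mu =0$), so that $e^{-t}t^{-\kappa -1/2}K_{2\mu }\left( 2\sqrt{xt}\right) \left\vert \ln t\right\vert $ is integrable precisely because $\kappa +\left\vert \mu \right\vert <\frac{1}{2}$, and these bounds hold locally uniformly in $\kappa $. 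Since $\partial _{\kappa }t^{-\kappa -1/2}=-\left( \ln t\right) t^{-\kappa -1/2}$, it follows that $J^{\prime }\left( \kappa \right) =-\mathcal{H}\left( \kappa ,\mu ;x\right) $.

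Hence $\mathcal{H}\left( \kappa ,\mu ;x\right) =-\frac{d}{d\kappa }\left[ \frac{\Gamma \left( \frac{1}{2}+\mu -\kappa \right) \Gamma \left( \frac{1}{2}-\mu -\kappa \right) }{2\sqrt{x}\,e^{-x/2}}\,\mathrm{W}_{\kappa ,\mu }\left( x\right) \right] $. Expanding by the product rule, and using $\frac{d}{d\kappa }\ln \Gamma \left( \frac{1}{2}\pm \mu -\kappa \right) =-\psi \left( \frac{1}{2}\pm \mu -\kappa \right) $, one obtains a term proportional to $\left[ \psi \left( \frac{1}{2}+\mu -\kappa \right) +\psi \left( \frac{1}{2}-\mu -\kappa \right) \right] \mathrm{W}_{\kappa ,\mu }\left( x\right) $ together with a term proportional to $\partial _{\kappa }\mathrm{W}_{\kappa ,\mu }\left( x\right) $; into the latter I substitute (\ref{DkW_I1}), i.e.\ $\partial _{\kappa }\mathrm{W}_{\kappa ,\mu }\left( x\right) =\psi \left( \frac{1}{2}+\mu -\kappa \right) \mathrm{W}_{\kappa ,\mu }\left( x\right) +\frac{x^{\mu +1/2}e^{-x/2}}{\Gamma \left( \frac{1}{2}+\mu -\kappa \right) }I_{1}^{\ast }\left( \kappa ,\mu ;x\right) $. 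The two contributions carrying $\psi \left( \frac{1}{2}+\mu -\kappa \right) \mathrm{W}_{\kappa ,\mu }\left( x\right) $ cancel, leaving only a $\psi \left( \frac{1}{2}-\mu -\kappa \right) \mathrm{W}_{\kappa ,\mu }\left( x\right) $ term and an $I_{1}^{\ast }$ term; tidying the gamma prefactors via $\Gamma \left( \frac{1}{2}+\mu -\kappa \right) \Gamma \left( \frac{1}{2}-\mu -\kappa \right) /\Gamma \left( \frac{1}{2}+\mu -\kappa \right) =\Gamma \left( \frac{1}{2}-\mu -\kappa \right) $ and $x^{\mu +1/2}/\left( 2\sqrt{x}\right) =x^{\mu }/2$ then yields (\ref{H->I*1}). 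One may finally insert the closed form (\ref{I1s_general}) of $I_{1}^{\ast }$ to express $\mathcal{H}$ explicitly in terms of $\mathrm{W}_{\kappa ,\mu }$, Kummer functions and $G^{\left( 1\right) }$.

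The principal obstacle is the analytic one: exhibiting the locally uniform integrable majorant that legitimises moving $\partial _{\kappa }$ inside $\int_{0}^{\infty }$ across the strip $\left\vert \mu \right\vert <\frac{1}{2}-\kappa $. Once that interchange is in place the remaining computation is routine bookkeeping with $\psi $ and $\Gamma $, the only delicate point being to keep consistent track of the signs produced by $\partial _{\kappa }t^{-\kappa -1/2}$ and by $\frac{d}{d\kappa }\ln \Gamma \left( \frac{1}{2}\pm \mu -\kappa \right) $.
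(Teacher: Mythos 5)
Your proof is essentially the paper's own argument: differentiating the Bessel--$K$ representation (\ref{Int_W_BesselK}) with respect to $\kappa$ is exactly how the paper obtains (\ref{DkW_H_def}), and equating that with (\ref{DkW_I1}) and solving for $\mathcal{H}$ is the paper's final step; packaging the prefactor into $J(\kappa)$ and differentiating the quotient is the same computation in different clothing, and your justification of differentiation under the integral sign (the majorant $e^{-t}t^{-\kappa-1/2}K_{2\mu}(2\sqrt{xt})\left\vert \ln t\right\vert$, integrable at $0$ because $\kappa+\left\vert \mu\right\vert <\frac{1}{2}$) is a welcome addition that the paper omits. One caveat: if you carry your algebra to the very end, the $\psi\left(\frac{1}{2}+\mu-\kappa\right)\mathrm{W}_{\kappa,\mu}$ terms cancel as you say, but the surviving $I_{1}^{\ast}$ term comes out as $-\frac{1}{2}\,\Gamma\left(\frac{1}{2}-\mu-\kappa\right)x^{\mu}\,I_{1}^{\ast}\left(\kappa,\mu;x\right)$, i.e.\ with a \emph{minus} sign, whereas the displayed formula (\ref{H->I*1}) carries a plus sign. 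Since the paper's own recipe of equating (\ref{DkW_I1}) to (\ref{DkW_H_def}) produces the same minus sign, this appears to be a typo in the stated theorem rather than a flaw in your method; still, you asserted that your computation yields (\ref{H->I*1}) as printed, and a careful write-up should have flagged that the endpoint of the calculation does not match the displayed sign.
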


\begin{proof}
Differentiation of (\ref{Int_W_BesselK})\ with respect to parameter $\kappa $
yields:%
\begin{eqnarray}
&&\frac{\partial \mathrm{W}_{\kappa ,\mu }\left( x\right) }{\partial \kappa }%
=\left[ \psi \left( \frac{1}{2}-\mu -\kappa \right) +\psi \left( \frac{1}{2}%
+\mu -\kappa \right) \right] \mathrm{W}_{\kappa ,\mu }\left( x\right)
\label{DkW_H_def} \\
&&-\frac{2\sqrt{x}e^{-x/2}}{\Gamma \left( \frac{1}{2}+\mu -\kappa \right)
\Gamma \left( \frac{1}{2}-\mu -\kappa \right) }\mathcal{H}\left( \kappa ,\mu
;x\right)  \notag
\end{eqnarray}%
Equate (\ref{DkW_I1})\ to (\ref{DkW_H_def}) to arrive at (\ref{H->I*1}), as
we wanted to prove.
\end{proof}

\subsection{Derivative with respect to the second parameter $\partial
\mathrm{W}_{\protect\kappa ,\protect\mu }\left( x\right) /\partial \protect%
\mu $}

First, note that
\begin{equation}
\frac{\partial \mathrm{W}_{\kappa ,\pm \mu }\left( x\right) }{\partial \mu }%
=\pm \frac{\partial \mathrm{W}_{\kappa ,\mu }\left( x\right) }{\partial \mu }%
,  \label{DmW_pm}
\end{equation}%
since (\ref{W_mu=W_-mu})\ is satisfied. Next, let us introduce the following
definitions in order to calculate the first derivative of $\mathrm{W}%
_{\kappa ,\mu }\left( x\right) $ with respect to parameter $\mu $.

\begin{definition}
Following the notation introduced in (\ref{G(1)_def})-(\ref{H(1)_def}),
define%
\begin{equation}
\tilde{G}^{\left( 1\right) }\left( a,b,z\right) =\frac{\partial }{\partial a}%
\,\left[ \mathrm{U}\left( a,b,z\right) \right] ,  \label{G1_tilde_def}
\end{equation}%
and%
\begin{equation}
\tilde{H}^{\left( 1\right) }\left( a,b,z\right) =\frac{\partial }{\partial b}%
\,\left[ \mathrm{U}\left( a,b,z\right) \right] .  \label{H1_tilde_def}
\end{equation}
\end{definition}

Direct differentiation of (\ref{W_def_Tricomi}) yields:%
\begin{eqnarray}
&&\frac{\partial \mathrm{W}_{\kappa ,\mu }\left( x\right) }{\partial \mu }
\label{DmW_Tricomi} \\
&=&\ln x\,\mathrm{W}_{\kappa ,\mu }\left( x\right) +x^{\mu +1/2}e^{-x/2}
\notag \\
&&\left[ \tilde{G}^{\left( 1\right) }\left( \frac{1}{2}-\kappa +\mu ,1+2\mu
,x\right) +2\,\tilde{H}^{\left( 1\right) }\left( \frac{1}{2}-\kappa +\mu
,1+2\mu ,x\right) \right]  \notag
\end{eqnarray}

\begin{definition}
For $\mathrm{Re}\left( \mu -\kappa \right) >-\frac{1}{2}$ and $x>0$, define:%
\begin{eqnarray}
I_{3}^{\ast }\left( \kappa ,\mu ;x\right) &=&\int_{0}^{\infty }e^{-xt}t^{\mu
-\kappa -1/2}\left( 1+t\right) ^{\mu +\kappa -1/2}\ln \left[ t\left(
1+t\right) \right] dt,  \label{I*3_def} \\
I_{4}^{\ast }\left( \kappa ,\mu ;x\right) &=&\int_{1}^{\infty }e^{-xt}t^{\mu
+\kappa -1/2}\left( t-1\right) ^{\mu -\kappa -1/2}\ln \left[ t\left(
t-1\right) \right] dt.  \label{I*4_def}
\end{eqnarray}
\end{definition}

These integrals are interrelated by%
\begin{equation*}
I_{4}^{\ast }\left( \kappa ,\mu ;x\right) =e^{-x}I_{3}^{\ast }\left( \kappa
,\mu ;x\right) .
\end{equation*}

Differentiation of (\ref{W_k,mu_int_1})\ with respect to parameter $\mu $
gives%
\begin{eqnarray}
&&\frac{\partial \mathrm{W}_{\kappa ,\mu }\left( x\right) }{\partial \mu }
\label{DmW_I*3} \\
&=&\left[ \ln x-\psi \left( \mu -\kappa +\frac{1}{2}\right) \right] \mathrm{W%
}_{\kappa ,\mu }\left( x\right) +\frac{x^{\mu +1/2}e^{-x/2}}{\Gamma \left(
\mu -\kappa +\frac{1}{2}\right) }I_{3}^{\ast }\left( \kappa ,\mu ;x\right) .
\notag
\end{eqnarray}

\begin{theorem}
According to the notation introduced in (\ref{G1_tilde_def})\ and (\ref%
{H1_tilde_def}), the following integral holds true for $x>0$:
\begin{eqnarray}
&&I_{3}^{\ast }\left( \kappa ,\mu ;x\right)  \label{I*3_general} \\
&=&\Gamma \left( \frac{1}{2}-\kappa +\mu \right) \left\{ \mathrm{U}\left(
\frac{1}{2}-\kappa +\mu ,1+2\mu ,x\right) \psi \left( \frac{1}{2}-\kappa
+\mu \right) \right.  \notag \\
&&\,+\left. \tilde{G}^{\left( 1\right) }\left( \frac{1}{2}-\kappa +\mu
,1+2\mu ,x\right) +2\,\tilde{H}^{\left( 1\right) }\left( \frac{1}{2}-\kappa
+\mu ,1+2\mu ,x\right) \right\} .  \notag
\end{eqnarray}
\end{theorem}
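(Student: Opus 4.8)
The plan is to produce (\ref{I*3_general}) exactly as (\ref{I1s_general}) was produced: by writing $\partial \mathrm{W}_{\kappa ,\mu }(x)/\partial \mu$ in two ways and equating them. One expression comes from direct differentiation of the Tricomi representation (\ref{W_def_Tricomi}), namely (\ref{DmW_Tricomi}); the other comes from differentiating the Laplace-type integral representation (\ref{W_k,mu_int_1}), namely (\ref{DmW_I*3}), which contains $I_{3}^{\ast }(\kappa ,\mu ;x)$ explicitly. Since both right-hand sides equal the same derivative, I would set them equal; the common term $\ln x\,\mathrm{W}_{\kappa ,\mu }(x)$ cancels, and rearranging gives
\[
\frac{x^{\mu +1/2}e^{-x/2}}{\Gamma \left( \mu -\kappa +\frac{1}{2}\right) }\,I_{3}^{\ast }\left( \kappa ,\mu ;x\right) =\psi \left( \mu -\kappa +\frac{1}{2}\right) \mathrm{W}_{\kappa ,\mu }\left( x\right) +x^{\mu +1/2}e^{-x/2}\left[ \tilde{G}^{\left( 1\right) }\left( \frac{1}{2}-\kappa +\mu ,1+2\mu ,x\right) +2\,\tilde{H}^{\left( 1\right) }\left( \frac{1}{2}-\kappa +\mu ,1+2\mu ,x\right) \right] .
\]

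Next I would multiply both sides by $\Gamma \left( \mu -\kappa +\frac{1}{2}\right) x^{-\mu -1/2}e^{x/2}$ and eliminate $\mathrm{W}_{\kappa ,\mu }(x)$ on the right using (\ref{W_def_Tricomi}) in the form $\mathrm{W}_{\kappa ,\mu }(x)=e^{-x/2}x^{\mu +1/2}\,\mathrm{U}\left( \frac{1}{2}+\mu -\kappa ,1+2\mu ,x\right) $. The factor $e^{-x/2}x^{\mu +1/2}$ then cancels throughout, and collecting the three surviving terms under the common factor $\Gamma \left( \frac{1}{2}-\kappa +\mu \right) $ yields precisely (\ref{I*3_general}).

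There is no deep obstacle here; the work is bookkeeping. The two points that require care are: first, justifying the differentiation under the integral sign that produced (\ref{DmW_I*3}) --- this is legitimate on the parameter region $\mathrm{Re}(\mu -\kappa )>-\frac{1}{2}$ implicit in Definition (\ref{I*3_def}), since the integrand of (\ref{W_k,mu_int_1}) and its $\mu$-derivative are dominated near $t=0$ by an integrable power of $t$ and decay exponentially as $t\to \infty $ for $x>0$; and second, keeping the prefactors $x^{\mu +1/2}e^{-x/2}$ and $\Gamma \left( \mu -\kappa +\frac{1}{2}\right) $ straight, together with the factor $2$ multiplying $\tilde{H}^{\left( 1\right) }$, which arises because the lower parameter $b=1+2\mu$ of $\mathrm{U}$ depends on $\mu$ with derivative $2$. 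Once these are handled the identity follows immediately, the stated range $x>0$ (with $\mathrm{Re}(\mu-\kappa)>-\frac12$ understood from the definition of $I_{3}^{\ast }$) being the natural one on which both sides are defined.
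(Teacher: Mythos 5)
Your proposal is correct and follows exactly the paper's own route: equating the Tricomi-representation derivative (\ref{DmW_Tricomi}) with the integral-representation derivative (\ref{DmW_I*3}), cancelling the common $\ln x\,\mathrm{W}_{\kappa,\mu}(x)$ term, and eliminating $\mathrm{W}_{\kappa,\mu}(x)$ via (\ref{W_def_Tricomi}). The additional remarks on differentiation under the integral sign and the chain-rule factor of $2$ are sound and only make the bookkeeping more explicit than the paper's one-line proof.
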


\begin{proof}
Comparing (\ref{DmW_Tricomi})\ to (\ref{DmW_I*3}), taking into account (\ref%
{W_def_Tricomi}), we arrive at (\ref{I*3_general}), as we wanted to prove.
\end{proof}

\begin{theorem}
For $-2\mu \neq 0,1,2,\ldots $ and $x>0$, the following reduction formula
holds true:%
\begin{eqnarray}
&&\left. \frac{\partial \mathrm{W}_{\kappa ,\mu }\left( x\right) }{\partial
\mu }\right\vert _{\kappa =1/2-\mu }=x^{1/2-\mu }e^{-x/2}  \label{DmW_1/2-mu}
\\
&&\left\{ \frac{x}{2\mu }\,_{2}F_{2}\left( \left.
\begin{array}{c}
1,1 \\
2,1+2\mu%
\end{array}%
\right\vert -x\right) +e^{-2\pi i\mu }\,\Gamma \left( 1-2\mu ,x\right)
\,\gamma \left( 2\mu ,-x\right) +\psi \left( 2\mu \right) -\ln x\right\} .
\notag
\end{eqnarray}
\end{theorem}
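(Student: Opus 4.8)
The plan is to exploit the known reduction formula $\mathrm{W}_{1/2-\mu,\mu}\left( x\right) =e^{-x/2}x^{1/2-\mu}$, equation (\ref{W_1/2-mu,mu}), in exactly the same way that (\ref{DmuW_k=mu+1/2}) was obtained earlier from $\mathrm{W}_{\mu+1/2,\pm\mu}\left( x\right) =e^{-x/2}x^{1/2+\mu}$. Write $F\left( \mu\right) :=\mathrm{W}_{1/2-\mu,\mu}\left( x\right) $ and differentiate the identity $F\left( \mu\right) =e^{-x/2}x^{1/2-\mu}$ with respect to $\mu$. On the left, since the first index $\kappa =\tfrac12-\mu$ itself depends on $\mu$, the chain rule gives
\begin{equation*}
F'\left( \mu\right) =-\left. \frac{\partial \mathrm{W}_{\kappa ,\mu }\left( x\right) }{\partial \kappa }\right\vert _{\kappa =1/2-\mu }+\left. \frac{\partial \mathrm{W}_{\kappa ,\mu }\left( x\right) }{\partial \mu }\right\vert _{\kappa =1/2-\mu },
\end{equation*}
while on the right $F'\left( \mu\right) =-e^{-x/2}x^{1/2-\mu}\ln x$. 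Solving for the $\mu$-derivative yields
\begin{equation*}
\left. \frac{\partial \mathrm{W}_{\kappa ,\mu }\left( x\right) }{\partial \mu }\right\vert _{\kappa =1/2-\mu }=\left. \frac{\partial \mathrm{W}_{\kappa ,\mu }\left( x\right) }{\partial \kappa }\right\vert _{\kappa =1/2-\mu }-e^{-x/2}x^{1/2-\mu}\ln x.
\end{equation*}

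Next I would substitute the closed form for the $\kappa$-derivative at $\kappa =-\mu+\tfrac12$ already established in (\ref{DkW_-mu+1/2,mu}), which holds under precisely the hypotheses $-2\mu\neq 0,1,2,\ldots$ and $x>0$. Inserting that expression and collecting the common factor $e^{-x/2}x^{1/2-\mu}$ produces (\ref{DmW_1/2-mu}) at once: the $\ln x$ coming from differentiating $x^{1/2-\mu}$ supplies the $-\ln x$ inside the braces, the term $\psi\left( 2\mu\right) $, the ${}_2F_2$ term, and the incomplete-gamma product all carry over unchanged.

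There is essentially no analytic obstacle: the argument is a one-line chain rule followed by a citation of an earlier theorem. The only points that need a word of care are (i) the sign of the chain-rule term, since $d\kappa/d\mu=-1$ along the line $\kappa=\tfrac12-\mu$, and (ii) checking that the restriction $-2\mu\notin\{0,1,2,\ldots\}$ imposed here matches the one under which (\ref{DkW_-mu+1/2,mu}) was proved, so that the substitution is legitimate and the $\Gamma\left( 1-2\mu ,x\right) $, $\gamma\left( 2\mu ,-x\right) $ and ${}_2F_2$ factors are all well defined. (One could instead derive the formula from the integral representation (\ref{DmW_I*3}) by evaluating $I_{3}^{\ast }\left( \tfrac12-\mu ,\mu ;x\right) $, but that route is strictly longer and unnecessary given (\ref{DkW_-mu+1/2,mu}).)
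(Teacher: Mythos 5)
Your proof is correct, and the algebra checks out: with $\kappa=\tfrac12-\mu$ one has $d\kappa/d\mu=-1$, so subtracting the $\kappa$-derivative from the total derivative of $\mathrm{W}_{1/2-\mu,\mu}(x)=e^{-x/2}x^{1/2-\mu}$ gives exactly $\left.\partial_{\mu}\mathrm{W}_{\kappa,\mu}\right|_{\kappa=1/2-\mu}=\left.\partial_{\kappa}\mathrm{W}_{\kappa,\mu}\right|_{\kappa=1/2-\mu}-e^{-x/2}x^{1/2-\mu}\ln x$, and inserting (\ref{DkW_-mu+1/2,mu}) (proved earlier, under the same hypotheses, with no circularity) reproduces (\ref{DmW_1/2-mu}) term by term. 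However, this is a genuinely different route from the paper's. The paper instead evaluates the logarithmic integral $I_{3}^{\ast}\left(\tfrac12-\mu,\mu;x\right)=\mathcal{I}_{+}\left(2\mu-1,x\right)$ via the Laplace-transform Lemma (\ref{I(nu,x)_resultado}) and substitutes it, together with (\ref{W_1/2-mu,mu}), into the integral-representation derivative formula (\ref{DmW_I*3}); the factor $\psi(2\mu)-\ln x$ then emerges from combining the prefactor $\ln x-\psi(2\mu)$ with the $2\left[\psi(2\mu)-\ln x\right]$ inside $\mathcal{I}_{+}$. Your argument is the same device the authors themselves use to obtain the companion formula (\ref{DmuW_k=mu+1/2}) from (\ref{DkW_k=mu+1/2}), so it is entirely in the spirit of the paper and is shorter; what it does not deliver is the explicit evaluation of $I_{3}^{\ast}\left(\tfrac12-\mu,\mu;x\right)$ in (\ref{I*3(1/2-mu,mu,x)}), which the paper records as a result of independent interest and which its proof produces as a byproduct. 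Your two points of care (the sign of the chain-rule term and the matching of the hypothesis $-2\mu\notin\{0,1,2,\ldots\}$ with that of (\ref{DkW_-mu+1/2,mu})) are exactly the right ones, and both are satisfied.
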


\begin{proof}
According to (\ref{I(nu,x)_resultado}) and (\ref{I*3_def}), note that%
\begin{eqnarray}
&&I_{3}^{\ast }\left( \frac{1}{2}-\mu ,\mu ;x\right) =\mathcal{I}_{+}\left(
2\mu -1,x\right)  \label{I*3(1/2-mu,mu,x)} \\
&=&\frac{\Gamma \left( 2\mu \right) }{x^{2\mu }}\left\{ \frac{x}{2\mu }%
\,_{2}F_{2}\left( \left.
\begin{array}{c}
1,1 \\
2,1+2\mu%
\end{array}%
\right\vert -x\right) \right.  \notag \\
&&+\left. e^{-2\pi i\mu }\,\Gamma \left( 1-2\mu ,x\right) \,\gamma \left(
2\mu ,-x\right) +2\left[ \psi \left( 2\mu \right) -\ln x\right] \right\} .
\notag
\end{eqnarray}%
Taking $\kappa =1/2-\mu $ in (\ref{DmW_I*3}), substitute (\ref%
{I*3(1/2-mu,mu,x)})\ and (\ref{W_1/2-mu,mu})\ to arrive at the desired
result given in (\ref{DmW_1/2-mu}).
\end{proof}

\begin{remark}
If we take into account (\ref{I(nu,x)_2})\ in (\ref{I*3(1/2-mu,mu,x)}), we
obtain the alternative form:%
\begin{eqnarray*}
&&I_{3}^{\ast }\left( \frac{1}{2}-\mu ,\mu ;x\right) \\
&=&\frac{1}{x^{2\mu }}\left\{ \pi \left[ \cot \left( 2\pi \mu \right) -i%
\right] \gamma \left( 2\mu ,-x\right) +x\,\Gamma \left( 2\mu -1\right)
\,_{2}F_{2}\left( \left.
\begin{array}{c}
1,1 \\
2,2-2\mu%
\end{array}%
\right\vert x\right) \right. \\
&&+\left.
\begin{array}{c}
\\
\end{array}%
2\,\Gamma \left( 2\mu \right) \left[ \psi \left( 2\mu \right) -\ln x\right]
\right\} ,
\end{eqnarray*}%
thus for $-2\mu \neq 0,1,2,\ldots $ and $x>0$, we have
\begin{eqnarray}
&&\left. \frac{\partial \mathrm{W}_{\kappa ,\mu }\left( x\right) }{\partial
\mu }\right\vert _{\kappa =1/2-\mu }=x^{1/2-\mu }e^{-x/2}
\label{DmW_1/2-mu_alternative} \\
&&\left\{ \frac{\pi \left[ \cot \left( 2\pi \mu \right) -i\right] }{\Gamma
\left( 2\mu \right) }\gamma \left( 2\mu ,-x\right) +\frac{x}{2\mu -1}%
\,_{2}F_{2}\left( \left.
\begin{array}{c}
1,1 \\
2,2-2\mu%
\end{array}%
\right\vert x\right) +\psi \left( 2\mu \right) -\ln x\right\} .  \notag
\end{eqnarray}
\end{remark}

Table \ref{Table_4}\ shows the first derivative of $\mathrm{W}_{\kappa ,\mu
}\left( x\right) $ with respect to parameter $\mu $ for some particular
values of $\kappa $ and $\mu $, with $x>0$, calculated from (\ref%
{DmW_1/2-mu_alternative}) with the aid of MATHEMATICA\ program.

\begin{center}
%TCIMACRO{\TeXButton{B}{\begin{table}[tbp] \centering}}%
%BeginExpansion
\begin{table}[htbp] \centering%
%EndExpansion
\caption{Derivative of $W_{\kappa,\mu}$ with respect
to $\mu$ by using (\ref{DmW_1/2-mu_alternative}).}%
\rotatebox{90}{
\begin{tabular}{|c|c|c|}
\hline
$\kappa $ & $\mu $ & $\frac{\partial \mathrm{W}_{\kappa ,\mu }\left(
x\right) }{\partial \mu }\quad \left( x>0\right) $ \\ \hline\hline
$-\frac{1}{4}$ & $\pm \frac{3}{4}$ & $\pm x^{-1/4}e^{-x/2}\left[ 2-\gamma
-\ln \left( 4x\right) -2\,e^{x}\sqrt{\pi \,x}+\pi \,\mathrm{erfi}\left(
\sqrt{x}\right) +2x\,_{2}F_{2}\left( 1,1;\frac{1}{2},2;x\right) \right] $ \\
\hline
$\frac{1}{4}$ & $\pm \frac{1}{4}$ & $\pm x^{1/4}e^{-x/2}\left[ \pi \,\mathrm{%
erfi}\left( \sqrt{x}\right) -2x\,_{2}F_{2}\left( 1,1;\frac{3}{2},2;x\right)
-\gamma -\ln \left( 4x\right) \right] $ \\ \hline
$\frac{3}{4}$ & $\pm \frac{1}{4}$ & $\pm e^{-x/2}\left\{ x^{3/4}\left[ \frac{%
2}{3}x\,_{2}F_{2}\left( 1,1;\frac{5}{2},2;x\right) -2+\gamma +\ln \left(
4x\right) -\pi \,\mathrm{erfi}\left( \sqrt{x}\right) \right] +\sqrt{\pi }%
\,x^{1/4}\,e^{x}\right\} $ \\ \hline
$\frac{5}{4}$ & $\pm \frac{3}{4}$ & $%
\begin{array}{l}
\pm \frac{1}{30}x^{-1/4}e^{-x/2}\left\{ 15\sqrt{\pi }e^{x}\left( 2x+1\right)
-2\,x^{3/2}\right. \\
\left. \left[ 40-15\gamma -30\ln \left( 2x\right) +15\pi \,\mathrm{erfi}%
\left( \sqrt{x}\right) -12x\,_{2}F_{2}\left( 1,1;\frac{7}{2},2;x\right) %
\right] \right\}%
\end{array}%
$ \\ \hline
\end{tabular}%
}
\label{Table_4}%
%TCIMACRO{\TeXButton{E}{\end{table}}}%
%BeginExpansion
\end{table}%
%EndExpansion
\end{center}

Notice that for $-2\mu =0,1,\ldots $, we obtain an indeterminate expression
in (\ref{DmW_1/2-mu}) or (\ref{DmW_1/2-mu_alternative}). For these cases, we
present the following result.

\begin{theorem}
The following reduction formula holds true for $m=0,1,2,\ldots $:%
\begin{eqnarray}
&&\left. \frac{\partial \mathrm{W}_{\kappa ,\mu }\left( x\right) }{\partial
\mu }\right\vert _{\kappa =\left( 1+m\right) /2,\mu =\pm m/2}
\label{DmW_m_limt} \\
&=&\pm e^{-x/2}x^{\left( 1+m\right) /2}\sum_{k=1}^{m}x^{-k}\left[
e^{x}\,\Gamma \left( k\right) +\binom{m}{k}\,\gamma \left( k,-x\right) %
\right] .  \notag
\end{eqnarray}
\end{theorem}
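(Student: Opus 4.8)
The plan is to avoid redoing the integral computation and instead reduce everything to the already-established closed form (\ref{DkW_m}) for $\partial\mathrm{W}_{\kappa,\mu}(x)/\partial\kappa$ at the very same evaluation point, using that $(\kappa,\mu)=((1+m)/2,m/2)$ lies on the line $\kappa=\mu+\frac12$, where the Whittaker function degenerates. Indeed, by \cite[Eqn. 13.18.2]{olver2010nist} one has $\mathrm{W}_{\mu+1/2,\pm\mu}(x)=e^{-x/2}x^{1/2+\mu}$ for \emph{every} $\mu$, and differentiating this identity with respect to $\mu$ (exactly as in the derivation of (\ref{Dmu(W_mu+1/2)})) gives
\begin{equation*}
\left.\frac{\partial\mathrm{W}_{\kappa,\pm\mu}(x)}{\partial\kappa}\right|_{\kappa=\mu+1/2}\pm\left.\frac{\partial\mathrm{W}_{\kappa,\pm\mu}(x)}{\partial\mu}\right|_{\kappa=\mu+1/2}=e^{-x/2}x^{1/2+\mu}\ln x .
\end{equation*}
I would stress that this relation holds for all admissible $\mu$, in particular for $\mu=m/2$; there is no $2\mu\notin\mathbb{Z}$ restriction, since it comes purely from the elementary reduction formula rather than from (\ref{W_k,mu_def}).

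Next I would specialize to $\mu=m/2$, so that $\kappa=\mu+\frac12=(1+m)/2$ is precisely the evaluation point of the statement, and solve the displayed identity for the $\mu$-derivative:
\begin{equation*}
\left.\frac{\partial\mathrm{W}_{\kappa,\pm\mu}(x)}{\partial\mu}\right|_{\kappa=(1+m)/2,\mu=m/2}=\pm\left[e^{-x/2}x^{(1+m)/2}\ln x-\left.\frac{\partial\mathrm{W}_{\kappa,\pm\mu}(x)}{\partial\kappa}\right|_{\kappa=(1+m)/2,\mu=m/2}\right].
\end{equation*}
Then I would substitute the closed form (\ref{DkW_m}) for the $\kappa$-derivative (which is the same for both signs of $\mu$, as its right-hand side carries no $\pm$). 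The two $\ln x$ contributions cancel, and what remains is exactly $\pm\,e^{-x/2}x^{(1+m)/2}\sum_{k=1}^{m}x^{-k}\bigl[e^{x}\Gamma(k)+\binom{m}{k}\gamma(k,-x)\bigr]$, i.e.\ (\ref{DmW_m_limt}). The overall $\pm$ thus propagates automatically; equivalently it can be read off from the symmetry relation (\ref{DmW_pm}).

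I do not anticipate any genuine obstacle here: the argument is a two-line substitution once the right ingredients are identified. The only points requiring care are bookkeeping of the three sign conventions (the $\pm$ in the differentiated reduction formula, the sign-free right-hand side of (\ref{DkW_m}), and the sign in (\ref{DmW_pm})), and the observation that $(\kappa,\mu)=((1+m)/2,m/2)$ is \emph{not} in the degenerate range $-2\mu=0,1,2,\ldots$ excluded in the derivations leading to (\ref{DkW_m}), so that formula applies verbatim at this point.
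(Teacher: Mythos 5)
Your argument is correct, and it reaches (\ref{DmW_m_limt}) by a genuinely shorter route than the paper. The paper's proof takes $\nu=2\mu$ in (\ref{DmW_1/2-mu_alternative}) and performs the limit $\nu\rightarrow -m$, reusing the asymptotic analysis (\ref{Limit_resultado}); it then still has to invoke (\ref{DkW_m}) through the comparison identity (\ref{Comparison}) to put the limit expression into the stated form, and finally applies (\ref{DmW_pm}). You bypass the limit entirely: differentiating $\mathrm{W}_{\mu+1/2,\pm\mu}(x)=e^{-x/2}x^{1/2+\mu}$ along the line $\kappa=\mu+1/2$ gives (\ref{Dmu(W_mu+1/2)}) with no restriction on $2\mu$, and subtracting the already-established (\ref{DkW_m}) at $(\kappa,\mu)=((1+m)/2,\pm m/2)$ cancels the $\ln x$ terms and leaves exactly the claimed sum, with the $\pm$ supplied by (\ref{DmW_pm}). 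Both proofs ultimately rest on (\ref{DkW_m}) (and there is no circularity, since (\ref{DkW_m}) is proved independently beforehand); your version is the exact analogue of the paper's own proof of (\ref{DmuW_k=mu+1/2}), transplanted to the integer case, while the paper's version yields the identity (\ref{Comparison}) as a by-product and serves as a consistency check between the two families of formulas. One minor quibble: your closing remark that $((1+m)/2,m/2)$ is \emph{not} in the degenerate range $-2\mu=0,1,\ldots$ is unnecessary and slightly misstated --- (\ref{DkW_m}) is precisely the theorem stated at $\kappa=(1+m)/2$, $\mu=\pm m/2$ (it handles the degenerate range by a limiting process), so it applies simply because that is its hypothesis; this does not affect the validity of your argument.
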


\begin{proof}
Take $\nu =2\mu $ in (\ref{DmW_1/2-mu_alternative})\ and perform the limit $%
\nu \rightarrow -m=0,-1,-2,\ldots $%
\begin{eqnarray*}
&&\left. \frac{\partial \mathrm{W}_{\kappa ,\mu }\left( x\right) }{\partial
\mu }\right\vert _{\kappa =\left( m+1\right) /2,\mu =-m/2}=e^{-x/2}x^{\left(
1+m\right) /2} \\
&&\left\{ \lim_{\nu \rightarrow -m}\left[ \psi \left( \nu \right) +\frac{\pi %
\left[ \cot \left( \pi \nu \right) -i\right] }{\Gamma \left( \nu \right) }%
\gamma \left( \nu ,-x\right) \right] -\frac{x}{m+1}\,_{2}F_{2}\left( \left.
\begin{array}{c}
1,1 \\
2,2+m%
\end{array}%
\right\vert x\right) -\ln x\right\} .
\end{eqnarray*}%
Applying the result given in (\ref{Limit_resultado}), we get%
\begin{eqnarray}
&&\left. \frac{\partial \mathrm{W}_{\kappa ,\mu }\left( x\right) }{\partial
\mu }\right\vert _{\kappa =\left( m+1\right) /2,\mu =-m/2}=e^{-x/2}x^{\left(
1+m\right) /2}  \label{DmW_m_a} \\
&&\left\{ H_{m}-\gamma -i\,\pi +\left( -1\right) ^{m+1}m!\,\Gamma \left(
-m,-x\right) -\frac{x}{m+1}\,_{2}F_{2}\left( \left.
\begin{array}{c}
1,1 \\
2,2+m%
\end{array}%
\right\vert x\right) -\ln x\right\} .  \notag
\end{eqnarray}%
Now, compare (\ref{DkW_m})\ to (\ref{DkW_m_a}), to see that
\begin{eqnarray}
&&H_{m}-\gamma -i\,\pi +\left( -1\right) ^{m+1}m!\,\Gamma \left(
-m,-x\right) -\frac{x}{m+1}\,_{2}F_{2}\left( \left.
\begin{array}{c}
1,1 \\
2,2+m%
\end{array}%
\right\vert x\right)  \notag \\
&=&\ln x-\sum_{k=1}^{m}x^{-k}\left[ e^{x}\,\Gamma \left( k\right) +\binom{m}{%
k}\,\gamma \left( k,-x\right) \right]  \label{Comparison}
\end{eqnarray}%
Therefore, inserting (\ref{Comparison})\ in (\ref{DmW_m_a}), and taking into
account (\ref{DmW_pm}), we arrive at (\ref{DmW_m_limt}), as we wanted to
prove.
\end{proof}

\begin{remark}
It is worth noting that from \cite{buschman1974finite},
\begin{eqnarray}
&&\left. \frac{\partial \mathrm{W}_{\kappa ,\mu }\left( x\right) }{\partial
\mu }\right\vert _{\kappa =\left( N+1\right) /2,\mu =M/2}
\label{DmW_Buschmann} \\
= &&\sum_{k=1}^{\left( N+M\right) /2}\frac{\left( -1\right) ^{k}\left( \frac{%
N+M}{2}\right) !}{k\left( \frac{N+M}{2}-k\right) !}\mathrm{W}_{\frac{N+1}{2}%
-k,\frac{M}{2}}\left( x\right) +\sum_{k=1}^{\left( N-M\right) /2}\frac{%
\left( -1\right) ^{k}\left( \frac{N-M}{2}\right) !}{k\left( \frac{N-M}{2}%
-k\right) !}\mathrm{W}_{\frac{N+1}{2}-k,\frac{M}{2}}\left( x\right) ,  \notag
\end{eqnarray}%
where $-N\leq M\leq N$ and $M,N$ are integers of like parity, we can derive
an equivalent reduction formula to (\ref{DmW_m_limt}). Indeed, following
similar steps as in Remark \ref{Remark: Buschmann_1}, we arrive at:%
\begin{equation}
\left. \frac{\partial \mathrm{W}_{\kappa ,\mu }\left( x\right) }{\partial
\mu }\right\vert _{\kappa =\left( 1+m\right) /2,\mu =\pm m/2}=\pm
m!\,e^{-x/2}x^{\left( 1+m\right) /2}\sum_{k=1}^{m}\frac{x^{-k}}{k\left(
m-k\right) !}.  \label{DmW_m_Buschmann}
\end{equation}
\end{remark}

\begin{theorem}
For $n=0,1,2,\ldots $, the following reduction formula holds true:%
\begin{eqnarray}
&&\left. \frac{\partial \mathrm{W}_{\kappa ,\mu }\left( x\right) }{\partial
\mu }\right\vert _{\kappa =n/2,\mu =\pm \left( n+1\right) /2}
\label{DmW_n/2} \\
&=&\pm x^{-n/2}\,e^{x/2}\,\,\mathrm{E}_{1}\left( x\right) \,\,\Gamma \left(
1+n,x\right) \pm n!\,x^{n/2}e^{-x/2}  \notag \\
&&\sum_{k=0}^{n}\frac{x^{-k}}{\left( n-k\right) !}\left\{ H_{k}+\left(
-1\right) ^{k+1}\,\Gamma \left( -k,x\right) \,\gamma \left( k+1,-x\right)
-\sum_{\ell =1}^{k}\binom{k}{\ell }\left( -x\right) ^{-\ell }\gamma \left(
\ell ,x\right) \right\} .  \notag
\end{eqnarray}
\end{theorem}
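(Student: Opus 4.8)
The plan is to imitate the proof of the companion formula (\ref{DkW_n/2,(n+1)/2}), working now from the $\mu$-differentiated integral representation (\ref{DmW_I*3}). Setting $\kappa=n/2$ and $\mu=(n+1)/2$ there, one has $\mu-\kappa+\tfrac12=1$, so that $\psi(\mu-\kappa+\tfrac12)=\psi(1)=-\gamma$ by (\ref{Psi(1)}) and $\Gamma(\mu-\kappa+\tfrac12)=1$; moreover $\mathrm{W}_{n/2,(n+1)/2}(x)=x^{-n/2}e^{x/2}\Gamma(1+n,x)$ by (\ref{W_n/2_reduction}). Hence (\ref{DmW_I*3}) reduces to
\begin{equation*}
\left.\frac{\partial\mathrm{W}_{\kappa,\mu}(x)}{\partial\mu}\right|_{\kappa=n/2,\mu=(n+1)/2}
=(\ln x+\gamma)\,x^{-n/2}e^{x/2}\,\Gamma(1+n,x)+x^{n/2+1}e^{-x/2}\,I_{3}^{\ast}\!\left(\tfrac{n}{2},\tfrac{n+1}{2};x\right),
\end{equation*}
so the whole problem is to evaluate $I_{3}^{\ast}(n/2,(n+1)/2;x)$ in closed form.

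For this evaluation the simplest route is to relate $I_{3}^{\ast}$ to the already-computed $I_{1}^{\ast}$. Since for $\kappa=n/2,\ \mu=(n+1)/2$ the power of $t$ in the integrands (\ref{I*1_def}) and (\ref{I*3_def}) vanishes (the exponent $\mu-\kappa-\tfrac12$ is zero) and the remaining factor is $(1+t)^{\mu+\kappa-1/2}=(1+t)^{n}$, while $\ln[t(1+t)]-\ln\frac{1+t}{t}=2\ln t$, one obtains
\begin{equation*}
I_{3}^{\ast}\!\left(\tfrac{n}{2},\tfrac{n+1}{2};x\right)=I_{1}^{\ast}\!\left(\tfrac{n}{2},\tfrac{n+1}{2};x\right)+2\sum_{k=0}^{n}\binom{n}{k}\int_{0}^{\infty}e^{-xt}t^{k}\ln t\,dt .
\end{equation*}
The first term is given by (\ref{I*1_reduction_n}), and each integral in the sum equals $\frac{k!}{x^{k+1}}\bigl[\psi(k+1)-\ln x\bigr]$ by (\ref{I*1b_resultado}); using $\psi(k+1)=H_{k}-\gamma$ (cf. (\ref{Psi(n+1)})), $\binom{n}{k}k!=n!/(n-k)!$, and collapsing $\sum_{k=0}^{n}\frac{x^{-k}}{(n-k)!}=\frac{x^{-n}e^{x}}{n!}\Gamma(1+n,x)$ via (\ref{Sum_Gamma}) yields a closed expression for $I_{3}^{\ast}$. (Equivalently one may expand $(1+t)^{n}$ binomially and apply the Lemma directly through $\mathcal{I}_{+}(k,x)$ together with (\ref{2F2_reduction_m}); or simply subtract (\ref{DkW_I1}) from (\ref{DmW_I*3}) and invoke (\ref{DkW_n/2,(n+1)/2}) for $\partial\mathrm{W}/\partial\kappa$ at the same point, which shortcuts past $I_{1}^{\ast}$ entirely.)

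Substituting this back into the displayed reduction of (\ref{DmW_I*3}) and doing the bookkeeping is the only real work. The coefficient of $x^{-n/2}e^{x/2}\Gamma(1+n,x)$ collects the contributions $\ln x+\gamma$ (from the leading term), $\mathrm{Ein}(x)$ (from the $\frac{e^{x}\mathrm{Ein}(x)}{x^{n+1}}\Gamma(n+1,x)$ part of (\ref{I*1_reduction_n})), and $-2\gamma-2\ln x$ (from the collapsed $\psi(k+1)-\ln x$ sum), which combine to $\mathrm{Ein}(x)-\ln x-\gamma=\mathrm{E}_{1}(x)$ by (\ref{E1(z)=Ein(z) -ln(z)-gamma}); simultaneously the doubled $+2H_{k}$ turns the $-H_{k}$ inside (\ref{I*1_reduction_n}) into the $+H_{k}$ of (\ref{DmW_n/2}), while the $(-1)^{k+1}\Gamma(-k,x)\gamma(k+1,-x)$ and $\sum_{\ell}\binom{k}{\ell}(-x)^{-\ell}\gamma(\ell,x)$ terms are carried over unchanged. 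This produces (\ref{DmW_n/2}) with the upper sign, and the lower sign follows at once from (\ref{DmW_pm}). The main obstacle is precisely this cancellation bookkeeping — keeping track of the $H_{k}$, $\gamma$, $\ln x$ and $\mathrm{Ein}$ pieces so that the leftover $\Gamma(1+n,x)$-coefficient is exactly $\mathrm{E}_{1}(x)$ — everything else being a direct substitution of formulas established earlier in the paper.
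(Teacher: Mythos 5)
Your proposal is correct and follows essentially the same route as the paper: both start from (\ref{DmW_I*3}) at $\kappa=n/2$, $\mu=(n+1)/2$ with $\psi(1)=-\gamma$ and $\mathrm{W}_{n/2,(n+1)/2}(x)=x^{-n/2}e^{x/2}\Gamma(1+n,x)$, and both reduce the problem to evaluating $I_{3}^{\ast}(n/2,(n+1)/2;x)$ by binomial expansion of $(1+t)^{n}$. Your decomposition $I_{3}^{\ast}=I_{1}^{\ast}+2\sum_{k}\binom{n}{k}\mathcal{I}_{b}(k,x)$ is just the identity $\mathcal{I}_{+}=\mathcal{I}_{-}+2\mathcal{I}_{b}$ built into the Lemma, so it is equivalent to the paper's direct use of $\mathcal{I}_{+}(k,x)$ together with (\ref{2F2_reduction_m}), and your bookkeeping of the $\gamma$, $\ln x$, $H_{k}$ and $\mathrm{Ein}$ pieces leading to the $\mathrm{E}_{1}(x)$ coefficient checks out.
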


\begin{proof}
According to (\ref{I*3_def}) and (\ref{I(nu,x)_resultado}), using the
binomial theorem, and taking into account (\ref{Psi(n+1)}), we have
\begin{eqnarray}
&&I_{3}^{\ast }\left( \frac{n}{2},\frac{n+1}{2};x\right) =\int_{0}^{\infty
}e^{-xt}\left( 1+t\right) ^{n}\ln \left[ t\left( 1+t\right) \right] dt
\label{I*3_n/2} \\
&=&\sum_{k=0}^{n}\binom{n}{k}\mathcal{I}_{+}\left( k,x\right)
=n!\sum_{k=0}^{n}\frac{x^{-k-1}}{\left( n-k\right) !}  \notag \\
&&\left\{ \frac{x}{k+1}\,_{2}F_{2}\left( \left.
\begin{array}{c}
1,1 \\
2,2+k%
\end{array}%
\right\vert -x\right) +\left( -1\right) ^{k+1}\,\Gamma \left( -k,x\right)
\,\gamma \left( k+1,-x\right) +2\left[ H_{k}-\gamma -\ln x\right] \right\} .
\notag
\end{eqnarray}%
Consider (\ref{2F2_reduction_m}), (\ref{E1(z)=Ein(z) -ln(z)-gamma})\ and (%
\ref{Sum_Gamma})\ in order to rewrite (\ref{I*3_n/2})\ as
\begin{eqnarray}
&&I_{3}^{\ast }\left( \frac{n}{2},\frac{n+1}{2};x\right) =\frac{\mathrm{E}%
_{1}\left( x\right) -\ln x-\gamma }{x^{n+1}}\,e^{x}\,\Gamma \left(
n+1,x\right)  \label{I*3_n/2_(1)} \\
&&+n!\sum_{k=0}^{n}\frac{x^{-k-1}}{\left( n-k\right) !}\left\{ H_{k}+\left(
-1\right) ^{k+1}\,\Gamma \left( -k,x\right) \,\gamma \left( k+1,-x\right)
-\sum_{\ell =1}^{k}\binom{k}{\ell }\left( -x\right) ^{-\ell }\gamma \left(
\ell ,x\right) \right\} .  \notag
\end{eqnarray}%
Therefore, substituting (\ref{W_n/2_reduction}), (\ref{Psi(1)}), and (\ref%
{I*3_n/2_(1)})\ in (\ref{DmW_I*3}), we obtain (\ref{DmW_n/2}), as we wanted
to prove.
\end{proof}

\begin{theorem}
For $n=0,1,2,\ldots $, the following reduction formula holds true:%
\begin{eqnarray}
&&\left. \frac{\partial \mathrm{W}_{\kappa ,\mu }\left( x\right) }{\partial
\mu }\right\vert _{\kappa =0,\mu =\pm \left( n+1/2\right) }
\label{DmW_0_n+1/2} \\
&=&\pm \sqrt{\frac{x}{\pi }}K_{n+1/2}\left( \frac{x}{2}\right) \left[
\mathrm{E}_{1}\left( x\right) -H_{n}\right] \pm e^{-x/2}\sum_{k=0}^{n}\frac{%
\left( n+k\right) !\,x^{-k}}{k!\left( n-k\right) !}  \notag \\
&&\left\{ H_{n+k}+\left( -1\right) ^{n+k+1}\,\Gamma \left( -n-k,x\right)
\,\gamma \left( n+k+1,-x\right) -\sum_{\ell =1}^{k}\binom{k}{\ell }\left(
-x\right) ^{-\ell }\gamma \left( \ell ,x\right) \right\} .  \notag
\end{eqnarray}
\end{theorem}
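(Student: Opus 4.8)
The plan is to mirror the proof of (\ref{DkW0n}) for the first-parameter derivative. Specialising the integral representation (\ref{DmW_I*3}) to $\kappa=0$, $\mu=n+\tfrac12$ (where $\mu-\kappa+\tfrac12=n+1$ and $\Gamma(n+1)=n!$) gives $\left.\partial_{\mu}\mathrm{W}_{\kappa,\mu}(x)\right|_{\kappa=0,\,\mu=n+1/2}=[\ln x-\psi(n+1)]\,\mathrm{W}_{0,n+1/2}(x)+\tfrac{x^{n+1}e^{-x/2}}{n!}\,I_{3}^{\ast}(0,n+\tfrac12;x)$, so the whole problem reduces to evaluating the logarithmic integral $I_{3}^{\ast}(0,n+\tfrac12;x)$. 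Afterwards I will substitute $\mathrm{W}_{0,n+1/2}(x)=\sqrt{x/\pi}\,K_{n+1/2}(x/2)$ from (\ref{W_0,n+1/2}) and $\psi(n+1)=-\gamma+H_{n}$ from (\ref{Psi(n+1)}), and invoke (\ref{DmW_pm}) to restore the overall $\pm$; hence it suffices to treat $\mu=+(n+\tfrac12)$.

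To evaluate the integral, note from (\ref{I*3_def}) that $I_{3}^{\ast}(0,n+\tfrac12;x)=\int_{0}^{\infty}e^{-xt}[t(1+t)]^{n}\ln[t(1+t)]\,dt$, which the binomial theorem turns into $\sum_{k=0}^{n}\binom{n}{k}\mathcal{I}_{+}(n+k,x)$ in the notation of the Lemma (\ref{I(nu,x)_resultado}). Applying that Lemma with $\nu=n+k$, then the Appendix identity (\ref{2F2_reduction_m}) to remove the ${}_{2}F_{2}$, and converting the $\mathrm{Ein}$ term by (\ref{E1(z)=Ein(z) -ln(z)-gamma}) and the $\psi(n+k+1)$ term by (\ref{Psi(n+1)}) — this is the same computation that produced (\ref{I*1_reduction_0_n}), except that the $(1+1)$-term of (\ref{I(nu,x)_resultado}) is now present — one finds that each summand splits into a $k$-independent piece $\mathrm{E}_{1}(x)-\ln x-\gamma$ multiplied by $\binom{n}{k}\tfrac{(n+k)!}{x^{n+k+1}}$, plus $k$-dependent terms built from incomplete gamma functions and harmonic numbers. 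The decisive observation is that $\sum_{k=0}^{n}\binom{n}{k}\tfrac{(n+k)!}{x^{n+k+1}}=\tfrac{n!}{x^{n+1}}e^{x/2}\sqrt{x/\pi}\,K_{n+1/2}(x/2)$ by (\ref{Sum_BesselK_n+1/2}), so the $k$-independent piece collapses to a clean multiple of $K_{n+1/2}(x/2)$, while the $k$-dependent terms assemble (after multiplying by $\tfrac{x^{n+1}e^{-x/2}}{n!}$) into the finite sum displayed in (\ref{DmW_0_n+1/2}).

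Putting this back into the formula for $\partial_{\mu}\mathrm{W}$, the collapsed piece contributes $(\mathrm{E}_{1}(x)-\ln x-\gamma)\sqrt{x/\pi}\,K_{n+1/2}(x/2)$, which adds to $[\ln x-\psi(n+1)]\mathrm{W}_{0,n+1/2}(x)=(\ln x+\gamma-H_{n})\sqrt{x/\pi}\,K_{n+1/2}(x/2)$; the $\ln x$ and $\gamma$ terms cancel and leave precisely $(\mathrm{E}_{1}(x)-H_{n})\sqrt{x/\pi}\,K_{n+1/2}(x/2)$, so together with the $K$-free sum we obtain (\ref{DmW_0_n+1/2}) for $\mu=+(n+\tfrac12)$, and (\ref{DmW_pm}) settles the $-$ case. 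The main obstacle is entirely bookkeeping: faithfully tracking the $\gamma$, $\ln x$, $H_{n}$ and $H_{n+k}$ contributions through (\ref{I(nu,x)_resultado}) and (\ref{2F2_reduction_m}), and recognising that the Euler-plus-logarithm remnant of the finite sum must be re-summed via (\ref{Sum_BesselK_n+1/2}) into a multiple of $K_{n+1/2}(x/2)$ rather than left buried inside the sum. A quick check at $n=0$, where the formula should reduce to $\pm e^{x/2}\mathrm{E}_{1}(x)$ in agreement with the $\mu=\tfrac12$ instance of (\ref{DmW_1/2-mu}), confirms the accounting.
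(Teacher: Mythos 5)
Your proposal is correct and follows essentially the same route as the paper's own proof: binomial expansion of $I_{3}^{\ast}(0,n+\tfrac12;x)$ into $\sum_{k}\binom{n}{k}\mathcal{I}_{+}(n+k,x)$, evaluation via the Lemma (\ref{I(nu,x)_resultado}) together with (\ref{2F2_reduction_m}) and (\ref{E1(z)=Ein(z) -ln(z)-gamma}), re-summation of the $k$-independent remnant through (\ref{Sum_BesselK_n+1/2}) into a multiple of $K_{n+1/2}(x/2)$, and substitution into (\ref{DmW_I*3}) with (\ref{W_0,n+1/2}) and (\ref{DmW_pm}). The $n=0$ consistency check against (\ref{DmW_1/2-mu}) is a nice addition not present in the paper.
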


\begin{proof}
Applying the binomial theorem to (\ref{I*3_def}) for $\kappa =0$ and $\mu =n+%
\frac{1}{2}$,\ and taking into account (\ref{I(nu,x)_resultado}), (\ref%
{2F2_reduction_m}), (\ref{E1(z)=Ein(z) -ln(z)-gamma}),\ and (\ref%
{Sum_BesselK_n+1/2}) for $x>0$, we arrive at
\begin{eqnarray}
&&I_{3}^{\ast }\left( 0,n+\frac{1}{2};x\right) =\int_{0}^{\infty }e^{-xt}%
\left[ t\left( 1+t\right) \right] ^{n}\ln \left[ t\left( 1+t\right) \right]
dt  \label{I*3(0,n+1/2,x)} \\
&=&\sum_{k=0}^{n}\binom{n}{k}\int_{0}^{\infty }e^{-xt}t^{n+k}\ln \left[
t\left( 1+t\right) \right] dt=\sum_{k=0}^{n}\binom{n}{k}\mathcal{I}%
_{+}\left( n+k,x\right)  \notag \\
&=&\frac{n!e^{x/2}K_{n+1/2}\left( \frac{x}{2}\right) }{\sqrt{\pi }\,x^{n+1/2}%
}\left[ \mathrm{E}_{1}\left( x\right) -\gamma -\ln x\right] +\frac{n!}{%
x^{n+1}}\sum_{k=0}^{n}\frac{\left( n+k\right) !\,x^{-k}}{k!\left( n-k\right)
!}  \notag \\
&&\left\{ H_{n+k}+\left( -1\right) ^{n+k+1}\,\Gamma \left( -n-k,x\right)
\,\gamma \left( n+k+1,-x\right) -\sum_{\ell =1}^{k}\binom{k}{\ell }\left(
-x\right) ^{-\ell }\gamma \left( \ell ,x\right) \right\} .  \notag
\end{eqnarray}%
Take $\kappa =0$ and $\mu =n+\frac{1}{2}$ in (\ref{DmW_I*3}), and substitute
(\ref{I*3(0,n+1/2,x)})\ and (\ref{W_0,n+1/2}) in order\ to arrive at (\ref%
{DmW_0_n+1/2}), as we wanted to prove.
\end{proof}

Table \ref{Table_3B} shows $\mathrm{W}_{\kappa ,\mu }\left( x\right) $ with
respect to parameter $\mu $ for some particular values of $\kappa $ and $\mu
$, which has been calculated from (\ref{DmW_m_limt}), (\ref{DmW_n/2}), and (%
\ref{DmW_0_n+1/2}) with the aid of MATHEMATICA\ program.

\begin{center}
%TCIMACRO{\TeXButton{B}{\begin{table}[tbp] \centering}}%
%BeginExpansion
\begin{table}[tbp] \centering%
%EndExpansion
\caption{Derivative of $W_{\kappa,\mu}$ with respect
to $\mu$ by using (\ref{DmW_n/2}) and (\ref{DmW_0_n+1/2}).}%
\begin{tabular}{|c|c|c|}
\hline
$\kappa $ & $\mu $ & $\frac{\partial \mathrm{W}_{\kappa ,\mu }\left(
x\right) }{\partial \mu }$ \\ \hline\hline
$0$ & $\pm \frac{1}{2}$ & $\pm e^{x/2}\left[ \mathrm{Shi}\left( x\right) -%
\mathrm{Chi}\left( x\right) \right] $ \\ \hline
$0$ & $\pm \frac{3}{2}$ & $\pm x^{-1}e^{-x/2}\left\{ e^{x}\left( x-2\right) %
\left[ \mathrm{Shi}\left( x\right) -\mathrm{Chi}\left( x\right) \right]
+4\right\} $ \\ \hline
$0$ & $\pm \frac{5}{2}$ & $\pm x^{-2}e^{-x/2}\left\{ 4\left( x+8\right)
-e^{x}\left( x^{2}-6x+12\right) \left[ \mathrm{Shi}\left( x\right) -\mathrm{%
Chi}\left( x\right) \right] \right\} $ \\ \hline
$\frac{1}{2}$ & $\pm 1$ & $\pm x^{-1/2}e^{-x/2}\left\{ e^{x}\left[ \mathrm{%
Shi}\left( x\right) -\mathrm{Chi}\left( x\right) \right] +2\right\} $ \\
\hline
$\frac{1}{2}$ & $0$ & $0$ \\ \hline
$1$ & $\pm \frac{1}{2}$ & $\pm e^{-x/2}$ \\ \hline
$1$ & $\pm \frac{3}{2}$ & $\pm x^{-1}e^{-x/2}\left\{ 2\,e^{x}\left[ \mathrm{%
Shi}\left( x\right) -\mathrm{Chi}\left( x\right) \right] +3\left( x+2\right)
\right\} $ \\ \hline
$\frac{3}{2}$ & $\pm 1$ & $\pm x^{-1/2}e^{-x/2}\left( 2x+1\right) $ \\ \hline
$\frac{3}{2}$ & $\pm 2$ & $\pm x^{-3/2}e^{-x/2}\left\{ 2\left(
2x^{2}+7x+11\right) -6\,e^{x}\left[ \mathrm{Shi}\left( x\right) -\mathrm{Chi}%
\left( x\right) \right] \right\} $ \\ \hline
$2$ & $\pm \frac{3}{2}$ & $\pm e^{-x/2}\left( 3x+3+\frac{2}{x}\right) $ \\
\hline
$2$ & $\pm \frac{5}{2}$ & $\pm x^{-2}e^{-x/2}\left\{ 5\left(
x^{3}+5x^{2}+14x+20\right) -24\,e^{x}\left[ \mathrm{Shi}\left( x\right) -%
\mathrm{Chi}\left( x\right) \right] \right\} $ \\ \hline
\end{tabular}%
\label{Table_3B}%
%TCIMACRO{\TeXButton{E}{\end{table}}}%
%BeginExpansion
\end{table}%
%EndExpansion
\end{center}

\section{Integral Whittaker functions $\mathrm{Wi}_{\protect\kappa ,\protect%
\mu }$ and $\mathrm{wi}_{\protect\kappa ,\protect\mu }$}

In \cite{apelblat2021integral}, we found some reduction formulas for the
integral Whittaker function $\mathrm{Wi}_{\kappa ,\mu }\left( x\right) $.
Next, we derive some new reduction formulas for $\mathrm{Wi}_{\kappa ,\mu
}\left( x\right) $ and $\mathrm{wi}_{\kappa ,\mu }\left( x\right) $ from
reduction formulas of the Whittaker function $\mathrm{W}_{\kappa ,\mu
}\left( x\right) $.

\begin{theorem}
The following reduction formula holds true for $n=0,1,2,\ldots $ and $\kappa
>0$:%
\begin{equation}
\mathrm{Wi}_{\kappa +n,\kappa -1/2}\left( x\right) =\left( -1\right)
^{n}\left( 2\kappa \right) _{n}\,2^{\kappa }\sum_{m=0}^{n}\binom{n}{m}\frac{%
\left( -2\right) ^{m}}{\left( 2\kappa \right) _{m}}\gamma \left( \kappa
+m,x/2\right) .  \label{Wi_k+n,k-1/2}
\end{equation}
\end{theorem}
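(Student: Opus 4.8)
The plan is to start from the Laguerre reduction of the Whittaker function that has already been recorded in the excerpt, namely (\ref{W_k+n_reduction}), and feed it into the definition (\ref{Wi_def}). Writing $\mathrm{Wi}_{\kappa+n,\kappa-1/2}(x)=\int_0^x \mathrm{W}_{\kappa+n,\kappa-1/2}(t)/t\,dt$ and substituting $\mathrm{W}_{\kappa+n,\kappa-1/2}(t)=(-1)^n n!\,e^{-t/2}t^{\kappa}L_n^{(2\kappa-1)}(t)$ gives
\begin{equation*}
\mathrm{Wi}_{\kappa+n,\kappa-1/2}(x)=(-1)^n n!\int_0^x e^{-t/2}t^{\kappa-1}L_n^{(2\kappa-1)}(t)\,dt ,
\end{equation*}
which is convergent at $t=0$ precisely because $\kappa>0$ (this is also what makes $\mathrm{Wi}_{\kappa+n,\kappa-1/2}$ well defined), so the hypothesis of the theorem is exactly the natural one.

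Next I would insert the explicit hypergeometric-free expansion of the generalized Laguerre polynomial, $L_n^{(\alpha)}(t)=\sum_{m=0}^n(-1)^m\binom{n+\alpha}{n-m}\frac{t^m}{m!}$, with $\alpha=2\kappa-1$, so that $\binom{n+2\kappa-1}{n-m}=\Gamma(n+2\kappa)/[\Gamma(m+2\kappa)(n-m)!]$. Since the sum is finite, interchanging it with the integral is immediate and no analytic justification is needed beyond linearity. One is then left with the elementary integrals $\int_0^x e^{-t/2}t^{\kappa+m-1}\,dt$, which the substitution $u=t/2$ turns into $2^{\kappa+m}\int_0^{x/2}e^{-u}u^{\kappa+m-1}\,du=2^{\kappa+m}\gamma(\kappa+m,x/2)$, using the definition (\ref{gamma_def}) of the lower incomplete gamma function. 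This produces
\begin{equation*}
\mathrm{Wi}_{\kappa+n,\kappa-1/2}(x)=(-1)^n n!\sum_{m=0}^n\frac{(-1)^m\,\Gamma(n+2\kappa)}{m!\,(n-m)!\,\Gamma(m+2\kappa)}\,2^{\kappa+m}\,\gamma(\kappa+m,x/2).
\end{equation*}

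The final step is purely cosmetic rearrangement: pull out $(-1)^n 2^\kappa$, write $2^{\kappa+m}=2^\kappa 2^m$ so the remaining power of $-1$ and $2$ combine into $(-2)^m$, use $n!/(m!\,(n-m)!)=\binom{n}{m}$, and convert the gamma quotient into Pochhammer symbols via $\Gamma(n+2\kappa)/\Gamma(m+2\kappa)=\Gamma(2\kappa+n)\Gamma(2\kappa)^{-1}\big/[\Gamma(2\kappa+m)\Gamma(2\kappa)^{-1}]=(2\kappa)_n/(2\kappa)_m$. This yields exactly (\ref{Wi_k+n,k-1/2}). There is no real obstacle here; the only point requiring a little care is bookkeeping the signs and the factor conversions between $n!$, binomial coefficients and Pochhammer symbols, together with noting that the term-by-term integration is legitimate because the Laguerre sum is finite, so the argument is essentially a direct computation.
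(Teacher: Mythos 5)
Your proposal is correct and follows exactly the paper's own route: substitute the Laguerre reduction (\ref{W_k+n_reduction}) into the definition (\ref{Wi_def}), expand $L_n^{(2\kappa-1)}$ via (\ref{Laguerre_def}), integrate term by term, and recognize each integral as $2^{\kappa+m}\gamma(\kappa+m,x/2)$. The algebraic bookkeeping with binomials and Pochhammer symbols checks out, so nothing further is needed.
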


\begin{proof}
According to \cite[Eqn. 13.18.17]{olver2010nist}
\begin{equation}
\mathrm{W}_{\kappa +n,\kappa -1/2}\left( x\right) =\left( -1\right)
^{n}n!\,e^{-x/2}x^{\kappa }L_{n}^{\left( 2\kappa -1\right) }\left( x\right) ,
\label{W_k+n,k-1/2}
\end{equation}%
where \cite[Eqn. 4.17.2]{lebedev1965special}%
\begin{equation}
L_{n}^{\left( \alpha \right) }\left( x\right) =\sum_{m=0}^{n}\frac{\Gamma
\left( n+\alpha +1\right) }{\Gamma \left( m+\alpha +1\right) }\frac{\left(
-x\right) ^{m}}{m!\left( n-m\right) !},  \label{Laguerre_def}
\end{equation}%
denotes the Laguerre polynomials. Insert (\ref{Laguerre_def})\ in (\ref%
{W_k+n,k-1/2})\ and integrate term by term according to the definition of
the integral Whittaker function (\ref{Wi_def}), to get%
\begin{eqnarray*}
&&\mathrm{Wi}_{\kappa +n,\kappa -1/2}\left( x\right) \\
&=&\left( -1\right) ^{n}\left( 2\kappa \right) _{n}\sum_{m=0}^{n}\binom{n}{m}%
\frac{\left( -1\right) ^{m}}{\left( 2\kappa \right) _{m}}%
\int_{0}^{x}e^{-t/2}t^{\kappa +m-1}dt.
\end{eqnarray*}%
Finally, take into account the defintion of the lower incomplete gamma
function \cite[Eqn. 8.2.1]{olver2010nist}:%
\begin{equation}
\gamma \left( \nu ,z\right) =\int_{0}^{z}t^{\nu -1}e^{-t}dt,\qquad \mathrm{Re%
}\,\nu >0,  \label{gamma_def}
\end{equation}%
and simplify the result to arrive at (\ref{Wi_k+n,k-1/2}), as we wanted to
prove.
\end{proof}

\begin{remark}
Taking $n=0$ in (\ref{Wi_k+n,k-1/2}), we recover the formula given in \cite%
{apelblat2021integral}.
\end{remark}

\begin{theorem}
The following reduction formula holds true for $x>0$, $n=0,1,2,\ldots $ and $%
\kappa \in
%TCIMACRO{\U{211d} }%
%BeginExpansion
\mathbb{R}
%EndExpansion
$:%
\begin{equation}
\mathrm{wi}_{\kappa +n,\kappa -1/2}\left( x\right) =\left( -1\right)
^{n}\left( 2\kappa \right) _{n}\,2^{\kappa }\sum_{m=0}^{n}\binom{n}{m}\frac{%
\left( -2\right) ^{m}}{\left( 2\kappa \right) _{m}}\Gamma \left( \kappa
+m,x/2\right) ,  \label{wi_k+n,k-1/2}
\end{equation}%
where $\Gamma \left( \nu ,z\right) $ denotes the upper incomplete gamma
function (\ref{Gamma_def}).
\end{theorem}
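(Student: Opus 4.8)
The plan is to follow the proof of the preceding theorem for $\mathrm{Wi}_{\kappa+n,\kappa-1/2}(x)$ almost verbatim, with the lower incomplete gamma function replaced by the upper one. First I would start from the reduction formula (\ref{W_k+n,k-1/2}), which expresses $\mathrm{W}_{\kappa+n,\kappa-1/2}(t)$ in terms of the Laguerre polynomial $L_{n}^{(2\kappa-1)}(t)$, and substitute the explicit finite-sum representation (\ref{Laguerre_def}). Dividing by $t$, this writes $\mathrm{W}_{\kappa+n,\kappa-1/2}(t)/t$ as a finite linear combination of the functions $e^{-t/2}t^{\kappa+m-1}$, $m=0,\dots,n$, where the $n!$ together with the ratio $\Gamma(n+2\kappa)/\Gamma(m+2\kappa)=(2\kappa)_{n}/(2\kappa)_{m}$ and the factorials collapse into the coefficients $(-1)^{n}(2\kappa)_{n}\binom{n}{m}(-1)^{m}/(2\kappa)_{m}$.

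Next I would integrate term by term over $[x,\infty)$ according to the definition (\ref{wi__def}) of $\mathrm{wi}_{\kappa,\mu}$; this is legitimate since the sum is finite and every integrand decays exponentially at infinity. Each resulting integral is $\int_{x}^{\infty}e^{-t/2}t^{\kappa+m-1}\,dt$, and the substitution $u=t/2$ turns it into $2^{\kappa+m}\int_{x/2}^{\infty}e^{-u}u^{\kappa+m-1}\,du=2^{\kappa+m}\,\Gamma(\kappa+m,x/2)$ by the definition (\ref{Gamma_def}) of the upper incomplete gamma function. Pulling $2^{\kappa}$ out of the sum converts $2^{\kappa+m}(-1)^{m}$ into $2^{\kappa}(-2)^{m}$, which yields exactly (\ref{wi_k+n,k-1/2}).

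There is no genuine obstacle here: the argument is routine once one observes that, in contrast with $\mathrm{Wi}_{\kappa+n,\kappa-1/2}$, the integration is now over an interval bounded away from the origin, so $\int_{x}^{\infty}e^{-t/2}t^{\kappa+m-1}\,dt$ converges for every real $\kappa$ (the function $\Gamma(\nu,z)$ being entire in $\nu$ whenever $z>0$); this is precisely why the hypothesis can be weakened to $\kappa\in\mathbb{R}$, $x>0$. The only point requiring a little care is the bookkeeping of the $n!$, $\Gamma(n+2\kappa)/\Gamma(m+2\kappa)$ and $2^{\kappa+m}$ factors into the compact form $(2\kappa)_{n}\binom{n}{m}(-2)^{m}/(2\kappa)_{m}$, exactly as in the proof of the $\mathrm{Wi}$ theorem.
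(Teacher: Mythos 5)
Your proposal is correct and follows essentially the same route as the paper, whose proof simply says to repeat the argument for $\mathrm{Wi}_{\kappa+n,\kappa-1/2}$ using the upper incomplete gamma function (\ref{Gamma_def}) in place of the lower one. Your additional observation that integrating over $[x,\infty)$ with $x>0$ removes the convergence restriction at the origin, thereby justifying the weaker hypothesis $\kappa\in\mathbb{R}$, is a correct and welcome clarification that the paper leaves implicit.
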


\begin{proof}
Follow similar steps as in the previous theorem, but consider the definition
of the upper incomplete gamma function \cite[Eqn. 8.2.2]{olver2010nist}:%
\begin{equation}
\Gamma \left( \nu ,z\right) =\int_{z}^{\infty }t^{\nu -1}e^{-t}dt.
\label{Gamma_def}
\end{equation}
\end{proof}

\begin{theorem}
The following reduction formula holds true for $x>0$, and $n=0,1,2,\ldots $:%
\begin{equation}
\mathrm{wi}_{0,n+1/2}\left( x\right) =\sum_{m=0}^{n}\frac{\left( n+k\right)
!2^{-k}}{k!\left( n-k\right) !}\Gamma \left( -k,x/2\right) .
\label{wi_0,n+1/2}
\end{equation}
\end{theorem}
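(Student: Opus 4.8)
The plan is to start from the definition (\ref{wi__def}) of $\mathrm{wi}_{\kappa ,\mu }$ and exploit the elementary closed form of $\mathrm{W}_{0,n+1/2}$ coming from the half-integer order Macdonald function. Combining (\ref{W_0,n+1/2}) with the finite-sum identity (\ref{Sum_BesselK_n+1/2}) gives, for $t>0$,
\[
\mathrm{W}_{0,n+1/2}\left( t\right) =\sqrt{\frac{t}{\pi }}\,K_{n+1/2}\left( \frac{t}{2}\right) =e^{-t/2}\sum_{k=0}^{n}\frac{\left( n+k\right) !\,t^{-k}}{k!\left( n-k\right) !}.
\]
In particular $\mathrm{W}_{0,n+1/2}\left( t\right) /t$ decays exponentially as $t\rightarrow \infty $, so the integral in (\ref{wi__def}) converges and the finite sum above may be integrated term by term.

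Next I would divide this identity by $t$ and integrate over $\left[ x,\infty \right) $, which yields
\[
\mathrm{wi}_{0,n+1/2}\left( x\right) =\sum_{k=0}^{n}\frac{\left( n+k\right) !}{k!\left( n-k\right) !}\int_{x}^{\infty }e^{-t/2}\,t^{-k-1}\,dt.
\]
The remaining step is the substitution $t=2u$ in each integral, giving
\[
\int_{x}^{\infty }e^{-t/2}\,t^{-k-1}\,dt=2^{-k}\int_{x/2}^{\infty }e^{-u}\,u^{-k-1}\,du=2^{-k}\,\Gamma \left( -k,x/2\right) ,
\]
where the last equality is the definition (\ref{Gamma_def}) of the upper incomplete gamma function with parameter $\nu =-k$. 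Substituting back produces exactly the claimed formula.

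I do not expect any real obstacle here. The only points requiring care are that $\Gamma \left( -k,x/2\right) $ must be understood as the convergent integral (\ref{Gamma_def}) (and not through the relation $\Gamma \left( \nu \right) =\gamma \left( \nu ,z\right) +\Gamma \left( \nu ,z\right) $, since $\Gamma \left( -k\right) $ is not finite), and that the interchange of the finite sum with the integral is legitimate because the integrand decays exponentially at infinity; both are immediate. One could alternatively try to deduce the result from Theorem with (\ref{wi_k+n,k-1/2})–(\ref{wi_k+n,k-1/2}), but since setting $\kappa =0$ there produces $\mathrm{wi}_{n,\pm 1/2}$ rather than $\mathrm{wi}_{0,n+1/2}$, the direct computation via $K_{n+1/2}$ above is the cleanest route.
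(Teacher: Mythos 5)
Your proof is correct and follows essentially the same route as the paper: express $\mathrm{W}_{0,n+1/2}$ via the half-integer Macdonald function as a finite exponential sum using (\ref{W_0,n+1/2}) and (\ref{Sum_BesselK_n+1/2}), integrate term by term, and identify each integral as $2^{-k}\Gamma(-k,x/2)$ from the definition (\ref{Gamma_def}). Your explicit remarks on convergence and on reading $\Gamma(-k,x/2)$ directly as the integral (rather than via $\Gamma(\nu)=\gamma(\nu,z)+\Gamma(\nu,z)$) are sound, and you correctly read the summation index as $k$ despite the statement's typographical $m$.
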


\begin{proof}
From (\ref{Sum_BesselK_n+1/2}) and (\ref{W_0,n+1/2}), we have
\begin{equation*}
\mathrm{W}_{0,n+1/2}\left( z\right) =e^{-z/2}\sum_{k=0}^{n}\frac{\left(
n+k\right) !\,z^{-k}}{k!\left( n-k\right) !},
\end{equation*}%
thus, integrating term by term, we obtain
\begin{equation*}
\mathrm{wi}_{0,n+1/2}\left( x\right) =\sum_{k=0}^{n}\frac{\left( n+k\right) !%
}{k!\left( n-k\right) !}\int_{x}^{\infty }e^{-t/2}\,t^{-k-1}dt.
\end{equation*}%
Finally, taking into account (\ref{Gamma_def}), we arrive at (\ref%
{wi_0,n+1/2}), as we wanted to prove.
\end{proof}

\begin{theorem}
For $x>0$ and $\mathrm{Re}\left( \frac{1}{2}+\mu -\kappa \right) >0$, the
following integral representation holds true:%
\begin{equation}
\mathrm{wi}_{\kappa ,\mu }\left( x\right) =\frac{1}{\Gamma \left( \frac{1}{2}%
+\mu -\kappa \right) }\int_{0}^{\infty }\frac{t^{\mu -\kappa -1/2}\left(
1+t\right) ^{\mu +\kappa -1/2}}{\left( \frac{1}{2}+t\right) ^{\mu +1/2}}%
\Gamma \left( \frac{1}{2}+\mu ,x\left( t+\frac{1}{2}\right) \right) dt.
\label{wi_int}
\end{equation}
\end{theorem}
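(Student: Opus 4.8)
The plan is to start from the definition (\ref{wi__def}) of $\mathrm{wi}_{\kappa ,\mu }\left( x\right) $ and substitute the Laplace-type integral representation (\ref{W_k,mu_int_1}) for $\mathrm{W}_{\kappa ,\mu }\left( t\right) $, which is legitimate here because the argument is real and positive (so $\left\vert \arg \right\vert =0<\pi /2$) and the condition $\mathrm{Re}\left( \mu -\kappa \right) >-\tfrac{1}{2}$ is part of the hypothesis. Writing $\mathrm{W}_{\kappa ,\mu }\left( t\right) /t$ and using (\ref{W_k,mu_int_1}) yields
\[
\mathrm{wi}_{\kappa ,\mu }\left( x\right) =\frac{1}{\Gamma \left( \tfrac{1}{2}+\mu -\kappa \right) }\int_{x}^{\infty }s^{\mu -1/2}e^{-s/2}\left( \int_{0}^{\infty }e^{-st}\,t^{\mu -\kappa -1/2}\left( 1+t\right) ^{\mu +\kappa -1/2}\,dt\right) ds .
\]

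Next I would interchange the order of integration. The hypothesis $\mathrm{Re}\left( \tfrac{1}{2}+\mu -\kappa \right) >0$ guarantees that $t^{\mu -\kappa -1/2}$ is integrable at $t=0$, the factor $e^{-st}$ with $s\geq x>0$ controls the behaviour as $t\rightarrow \infty $, and $e^{-s/2}$ controls the behaviour as $s\rightarrow \infty $; hence the double integral of the modulus of the integrand is finite and Tonelli--Fubini applies. (Alternatively, one may first restrict to real parameters, where the integrand is nonnegative and Tonelli is immediate, and then extend by analytic continuation in $\kappa ,\mu $ throughout the strip $\mathrm{Re}\left( \tfrac{1}{2}+\mu -\kappa \right) >0$.) After swapping,
\[
\mathrm{wi}_{\kappa ,\mu }\left( x\right) =\frac{1}{\Gamma \left( \tfrac{1}{2}+\mu -\kappa \right) }\int_{0}^{\infty }t^{\mu -\kappa -1/2}\left( 1+t\right) ^{\mu +\kappa -1/2}\left( \int_{x}^{\infty }s^{\mu -1/2}e^{-s\left( t+1/2\right) }\,ds\right) dt .
\]

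Then I would evaluate the inner $s$-integral by the substitution $u=s\left( t+\tfrac{1}{2}\right) $, which gives $\left( t+\tfrac{1}{2}\right) ^{-\mu -1/2}\int_{x\left( t+1/2\right) }^{\infty }u^{\mu -1/2}e^{-u}\,du=\left( t+\tfrac{1}{2}\right) ^{-\mu -1/2}\,\Gamma \left( \tfrac{1}{2}+\mu ,\,x\left( t+\tfrac{1}{2}\right) \right) $ by the definition (\ref{Gamma_def}) of the upper incomplete gamma function. Substituting this back into the previous display produces exactly (\ref{wi_int}). The only genuinely delicate point in the argument is the justification of the interchange of the two integrals; the substitution steps are routine, and I expect the absolute-convergence estimate sketched above (or the analytic-continuation alternative) to settle the matter cleanly.
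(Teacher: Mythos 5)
Your proposal is correct and follows essentially the same route as the paper: substitute the Laplace-type representation (\ref{W_k,mu_int_1}) into the definition (\ref{wi__def}), exchange the order of integration, and evaluate the inner integral via (\ref{Gamma_def}) to produce the upper incomplete gamma function. Your explicit justification of the Tonelli--Fubini interchange is a welcome addition that the paper's proof omits.
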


\begin{proof}
According to (\ref{wi__def})\ and (\ref{W_k,mu_int_1}), we have
\begin{eqnarray*}
&&\mathrm{wi}_{\kappa ,\mu }\left( x\right) \\
&=&\frac{1}{\Gamma \left( \mu -\kappa +\frac{1}{2}\right) }\int_{x}^{\infty
}dt\,\,t^{\mu -1/2}e^{-t/2}\int_{0}^{\infty }e^{-x\,\xi }\xi ^{\mu -\kappa
-1/2}\left( 1+\xi \right) ^{\mu +\kappa -1/2}d\xi .
\end{eqnarray*}%
Exchange the integration order and calculate the inner integral using (\ref%
{Gamma_def}), to arrive at (\ref{wi_int}), as we wanted to prove.
\end{proof}

\begin{remark}
It is worth noting that we cannot follow the above steps to derive the
integral representation of $\mathrm{Wi}_{\kappa ,\mu }\left( x\right) $
because the corresponding integral does not converge, except for some
special cases such as the ones given in (\ref{Wi_k+n,k-1/2}).
\end{remark}

\begin{theorem}
For $x>0$ and $\mathrm{Re}\left( \frac{1}{2}+\mu -\kappa \right) >0$, the
following integral representation holds true:%
\begin{eqnarray}
&&\frac{\partial \mathrm{wi}_{\kappa ,\mu }\left( x\right) }{\partial \kappa
}=\frac{1}{\Gamma \left( \frac{1}{2}+\mu -\kappa \right) }  \label{Dkwi} \\
&&\int_{0}^{\infty }\left[ \psi \left( \frac{1}{2}+\mu -\kappa \right) +\ln
\left( \frac{1+t}{t}\right) \right] \frac{t^{\mu -\kappa -1/2}\left(
1+t\right) ^{\mu +\kappa -1/2}}{\left( \frac{1}{2}+t\right) ^{\mu +1/2}}%
\Gamma \left( \frac{1}{2}+\mu ,x\left( t+\frac{1}{2}\right) \right) dt.
\notag
\end{eqnarray}
\end{theorem}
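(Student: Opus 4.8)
The plan is to obtain (\ref{Dkwi}) simply by differentiating the integral representation (\ref{wi_int}) with respect to $\kappa$ under the integral sign. Denote the $t$-integrand of (\ref{wi_int}) by
\[
F(\kappa ,t;x)=\frac{t^{\mu -\kappa -1/2}\left( 1+t\right) ^{\mu +\kappa -1/2}}{\left( t+\tfrac{1}{2}\right) ^{\mu +1/2}}\,\Gamma \left( \tfrac{1}{2}+\mu ,x\left( t+\tfrac{1}{2}\right) \right) .
\]
The factors $\left( t+\tfrac12\right)^{-\mu -1/2}$ and $\Gamma\left(\tfrac12+\mu ,x\left(t+\tfrac12\right)\right)$ do not depend on $\kappa$, and $\dfrac{\partial}{\partial\kappa}\bigl[t^{-\kappa}(1+t)^{\kappa}\bigr]=\ln\!\left(\dfrac{1+t}{t}\right)t^{-\kappa}(1+t)^{\kappa}$, so
\[
\frac{\partial F}{\partial \kappa}(\kappa ,t;x)=\ln\!\left(\frac{1+t}{t}\right)F(\kappa ,t;x).
\]
Combining this with $\dfrac{\partial }{\partial \kappa }\dfrac{1}{\Gamma \left( \frac{1}{2}+\mu -\kappa \right) }=\dfrac{\psi \left( \frac{1}{2}+\mu -\kappa \right) }{\Gamma \left( \frac{1}{2}+\mu -\kappa \right) }$ (which follows from $\Gamma'/\Gamma=\psi$ and the chain rule) via the product rule gives exactly the right-hand side of (\ref{Dkwi}). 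Hence the whole content of the theorem is the legitimacy of interchanging $\partial/\partial\kappa$ with $\int_0^\infty(\cdot)\,dt$.

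To justify this interchange, I would fix $x>0$ and a compact neighbourhood $K$ of the given value of $\kappa$ lying inside the region $\mathrm{Re}\left(\tfrac12+\mu -\kappa\right)>0$, and exhibit a single $t$-integrable function dominating $\left|\partial F/\partial\kappa\right|$ uniformly for $\kappa\in K$. Near $t=0$ one has $\left|\partial F/\partial\kappa\right|\le C\,t^{\sigma -1}\left|\ln t\right|$ with $\sigma=\min_{\kappa\in K}\mathrm{Re}\left(\tfrac12+\mu -\kappa\right)>0$ (shrink $K$ if necessary), and $t^{\sigma -1}\left|\ln t\right|$ is integrable on $(0,1)$ precisely because $\sigma>0$; this is the one place the hypothesis $\mathrm{Re}\left(\tfrac12+\mu -\kappa\right)>0$ is genuinely used. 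As $t\to\infty$, the standard asymptotics of the upper incomplete gamma function (\ref{Gamma_def}) give $\Gamma\left(\tfrac12+\mu ,x\left(t+\tfrac12\right)\right)=O\!\left(t^{\,\mathrm{Re}\,\mu -1/2}e^{-xt}\right)$, so the exponential factor dominates the algebraic and logarithmic growth and furnishes an $L^1$ tail on $(1,\infty)$.

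With the Leibniz rule thus justified on $(0,\infty)$, the product-rule computation sketched in the first paragraph goes through verbatim, yielding (\ref{Dkwi}). The main obstacle is this domination argument, and within it the logarithmic singularity of $\ln\left((1+t)/t\right)$ at the origin; everything else is routine.
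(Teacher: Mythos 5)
Your proposal is correct and follows the same route as the paper, which simply states that direct differentiation of (\ref{wi_int}) with respect to $\kappa$ yields (\ref{Dkwi}). The only difference is that you explicitly justify the interchange of $\partial/\partial\kappa$ with the integral via a domination argument (correctly locating where the hypothesis $\mathrm{Re}\left(\tfrac{1}{2}+\mu-\kappa\right)>0$ is needed), a step the paper leaves implicit.
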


\begin{proof}
Direct differentiation of (\ref{wi_int})\ with respect to $\kappa $ yields (%
\ref{Dkwi}), as we wanted to prove.
\end{proof}

\section{Conclusions}

The Whittaker function $\mathrm{W}_{\kappa ,\mu }\left( x\right) $ is
defined in terms of the Tricomi function, hence its derivative with respect
to the parameters $\kappa $ and $\mu $ can be expressed as infinite sums of
quotients of the digamma and gamma functions. Also, the parameter
differentiation of some integral representations of $\mathrm{W}_{\kappa ,\mu
}\left( x\right) $ leads to infinite integrals of elementary functions.
These sums and integrals has been calculated for some particular cases of
the parameters $\kappa $ and $\mu $ in closed-form. As an application of
these results, we have calculated an infinite integral containing the
Macdonald function. It is worth noting that all the results presented in
this paper has been both numerically and symbolically checked with
MATHEMATICA program.

In the first Appendix, we calculate a reduction formula for the first
derivative of the Kummer function, i.e. $G^{\left( 1\right) }\left(
a;a;z\right) $, which it is necessary for the derivation of Theorem \ref%
{Theorem_1}.

In the second Appendix, we calculate a reduction formula of the
hypergeometric function $_{2}F_{2}\left( 1,1;2,2+m;x\right) $ for
non-negative integer $m$, since it is not found in most common literature,
such as \cite{prudnikov1986integrals}. This reduction formula is used
throughout Section \ref{Section: Integral representations} in order to
simplify the results obtained.

Finally, we collect some reduction formulas for the Whittaker function $%
\mathrm{W}_{\kappa ,\mu }\left( x\right) $ in the last Appendix.

\appendix{}

\section{Calculation of $G^{\left( 1\right) }\left( a;a;z\right) $}

\begin{theorem}
The following reduction formula holds true:%
\begin{equation}
G^{\left( 1\right) }\left( \left.
\begin{array}{c}
a \\
a%
\end{array}%
\right\vert x\right) =\frac{x\,e^{x}}{a}\,_{2}F_{2}\left( \left.
\begin{array}{c}
1,1 \\
a+1,2%
\end{array}%
\right\vert -x\right) .  \label{G(1)_(a;a;x)}
\end{equation}
\end{theorem}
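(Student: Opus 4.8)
The plan is to work entirely at the level of power series in $x$. First I would differentiate the defining series (\ref{1F1_Whittaker_def}) term by term with respect to $a$: since $\partial_a (a)_n = (a)_n\bigl[\psi(a+n)-\psi(a)\bigr]$, one obtains
\[
\partial_a\,{}_1F_1\!\left(\left.\begin{array}{c}a\\b\end{array}\right\vert x\right)=\sum_{n=0}^{\infty}\frac{(a)_n}{(b)_n}\,\frac{x^n}{n!}\bigl[\psi(a+n)-\psi(a)\bigr],
\]
so that putting $b=a$ collapses the Pochhammer quotients and yields $G^{(1)}\!\left(\left.\begin{array}{c}a\\a\end{array}\right\vert x\right)=\sum_{n\ge 0}\frac{x^n}{n!}\bigl[\psi(a+n)-\psi(a)\bigr]$. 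Iterating the recurrence $\psi(z+1)=\psi(z)+1/z$ turns the bracket into the finite sum $\sum_{k=0}^{n-1}\frac{1}{a+k}$ (empty when $n=0$).

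Next I would pass to an integral. Writing $\frac{1}{a+k}=\int_{0}^{1}t^{a+k-1}\,dt$ (first for $\mathrm{Re}\,a>0$) and using $\sum_{k=0}^{n-1}t^{k}=\frac{1-t^{n}}{1-t}$, an exchange of sum and integral, justified by absolute convergence for each fixed $x$, gives
\[
G^{(1)}\!\left(\left.\begin{array}{c}a\\a\end{array}\right\vert x\right)=\int_{0}^{1}t^{a-1}\sum_{n=0}^{\infty}\frac{x^{n}}{n!}\,\frac{1-t^{n}}{1-t}\,dt=\int_{0}^{1}\frac{t^{a-1}\bigl(e^{x}-e^{xt}\bigr)}{1-t}\,dt .
\]
The substitution $t=1-u$ rewrites the right-hand side as $e^{x}\int_{0}^{1}(1-u)^{a-1}\frac{1-e^{-xu}}{u}\,du$; expanding $\frac{1-e^{-xu}}{u}=\sum_{k\ge 0}\frac{(-1)^{k}x^{k+1}u^{k}}{(k+1)!}$ and integrating against $(1-u)^{a-1}$ by means of the Beta integral $\int_{0}^{1}(1-u)^{a-1}u^{k}\,du=\mathrm{B}(k+1,a)=\frac{k!}{(a)_{k+1}}$ then produces $G^{(1)}\!\left(\left.\begin{array}{c}a\\a\end{array}\right\vert x\right)=x\,e^{x}\sum_{k\ge 0}\frac{(-x)^{k}}{(k+1)\,(a)_{k+1}}$.

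It then remains to identify this series with the right-hand side of (\ref{G(1)_(a;a;x)}). Using $(1)_{k}=k!$, $(2)_{k}=(k+1)!$ and $a\,(a+1)_{k}=(a)_{k+1}$, the $k$-th coefficient of $\frac{x e^{x}}{a}\,{}_2F_2\!\left(\left.\begin{array}{c}1,1\\a+1,2\end{array}\right\vert -x\right)$ is exactly $\frac{(-1)^{k}}{(k+1)(a)_{k+1}}$, so the two series coincide. Since both sides of (\ref{G(1)_(a;a;x)}) are holomorphic in $a$ off the set $\{0,-1,-2,\dots\}$, the identity, once proved for $\mathrm{Re}\,a>0$, extends to all admissible $a$ by analytic continuation. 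I expect the only fussy points to be the coefficient bookkeeping in this last step and the routine justification of the term-by-term integration; no deeper obstacle is anticipated.
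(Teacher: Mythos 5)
Your argument is correct, and it reaches the identity by a genuinely different route than the paper. You differentiate the Kummer series term by term, reduce $\psi(a+n)-\psi(a)$ to the finite sum $\sum_{k=0}^{n-1}(a+k)^{-1}$, convert to the integral $\int_{0}^{1}t^{a-1}\bigl(e^{x}-e^{xt}\bigr)(1-t)^{-1}\,dt$, and then re-expand via Beta integrals; the coefficient bookkeeping at the end, $\frac{1}{a}\frac{(1)_k(1)_k}{(a+1)_k(2)_k k!}=\frac{1}{(k+1)(a)_{k+1}}$, checks out, and the analytic continuation off $\mathrm{Re}\,a>0$ to $a\notin\{0,-1,-2,\dots\}$ is legitimate since both sides are holomorphic there. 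The paper instead stays entirely on the formal series side: it first establishes the contiguous relation $_{2}F_{2}(1,b+1;2,a+1;x)=\frac{a}{bx}\bigl[\,_{1}F_{1}(b;a;x)-1\bigr]$ by an index shift, takes the limit $b\to 0$ by L'H\^{o}pital to identify $_{2}F_{2}(1,1;2,a+1;x)=\frac{a}{x}\,G^{(1)}(0;a;x)$, and then transports this to the diagonal case $G^{(1)}(a;a;x)$ by differentiating Kummer's transformation with respect to the upper parameter. Your method buys an explicit intermediate integral representation of $G^{(1)}(a;a;x)$ and needs no contiguous relations or parameter limits, at the cost of a convergence justification and a continuation step; the paper's method is shorter and purely algebraic but leans on the differentiated Kummer transformation as an extra lemma. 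Both are sound.
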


\begin{proof}
According to the definition of the Kummer function (\ref{1F1_Whittaker_def}%
), we have%
\begin{equation}
_{1}F_{1}\left( \left.
\begin{array}{c}
b \\
a%
\end{array}%
\right\vert x\right) =1+\sum_{n=0}^{\infty }\frac{\left( b\right) _{n+1}}{%
\left( a\right) _{n+1}}\frac{x^{n+1}}{\left( n+1\right) !}.  \label{1F1_(a)}
\end{equation}%
Taking into account \cite[Eqn. 18:5:7]{oldham2009atlas} \
\begin{equation*}
\left( \alpha \right) _{n+1}=\alpha \left( \alpha +1\right) _{n},
\end{equation*}%
and the definition of the generalized hypergeometric function (\ref{pFq_def}%
), we may recast (\ref{1F1_(a)})\ as
\begin{equation*}
_{1}F_{1}\left( \left.
\begin{array}{c}
b \\
a%
\end{array}%
\right\vert x\right) =1+\frac{b}{a}x\,_{2}F_{2}\left( \left.
\begin{array}{c}
1,b+1 \\
2,a+1%
\end{array}%
\right\vert x\right) ,
\end{equation*}%
thus, for $b\neq 0$, we obtain\footnote{%
It is worth noting that there is a typo in \cite[Eqn. 7.12.1(5)]%
{prudnikov1986integrals}.}%
\begin{equation}
_{2}F_{2}\left( \left.
\begin{array}{c}
1,b+1 \\
2,a+1%
\end{array}%
\right\vert x\right) =\frac{a}{b\,x}\left[ _{1}F_{1}\left( \left.
\begin{array}{c}
b \\
a%
\end{array}%
\right\vert x\right) -1\right] .  \label{2F2->1F1}
\end{equation}%
Applying L'H\^{o}pital's rule, calculate the limit $b\rightarrow 0$ in (\ref%
{2F2->1F1}), considering the notation given in (\ref{G(1)_def}),
\begin{equation}
_{2}F_{2}\left( \left.
\begin{array}{c}
1,1 \\
2,a+1%
\end{array}%
\right\vert x\right) =\frac{a}{x}G^{\left( 1\right) }\left( \left.
\begin{array}{c}
0 \\
a%
\end{array}%
\right\vert x\right) .  \label{2F2_G(1)}
\end{equation}%
Finally, differentiate Kummer's transformation formula (\ref%
{Kummer_transform}) with respect to the first parameter to obtain:%
\begin{equation}
G^{\left( 1\right) }\left( \left.
\begin{array}{c}
b \\
a%
\end{array}%
\right\vert x\right) =-e^{x}\,G^{\left( 1\right) }\left( \left.
\begin{array}{c}
b-a \\
b%
\end{array}%
\right\vert -x\right) .  \label{G(1)_transform}
\end{equation}%
Apply (\ref{G(1)_transform})\ in order to rewrite (\ref{2F2_G(1)})\ as (\ref%
{G(1)_(a;a;x)}), as we wanted to prove.
\end{proof}

\section{Calculation of $_{2}F_{2}\left( 1,1;2,2+m;x\right) $}

\begin{theorem}
For $m=0,1,2,\ldots $, the following reduction formula holds true:%
\begin{equation}
_{2}F_{2}\left( \left.
\begin{array}{c}
1,1 \\
2,2+m%
\end{array}%
\right\vert x\right) =\frac{m+1}{x}\left\{ H_{m}-\mathrm{Ein}\left(
-x\right) +\sum_{k=1}^{m}\binom{m}{k}x^{-k}\gamma \left( k,-x\right)
\right\} ,  \label{2F2_reduction}
\end{equation}%
where $\mathrm{Ein}\left( z\right) $ denotes the complementary exponential
integral.
\end{theorem}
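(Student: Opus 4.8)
The plan is to convert the series into an elementary integral and then evaluate that integral term by term. First I would start from the defining series (\ref{pFq_def}), simplifying the Pochhammer symbols via $\left( 1\right) _{n}=n!$, $\left( 2\right) _{n}=\left( n+1\right) !$ and $\left( 2+m\right) _{n}=\left( n+m+1\right) !/\left( m+1\right) !$, so that the coefficient of $x^{n}$ is $\left( m+1\right) !/\left[ \left( n+1\right) \left( n+m+1\right) !\right] $. Using $n!\,m!/\left( n+m+1\right) !=\mathrm{B}\left( n+1,m+1\right) =\int_{0}^{1}t^{n}\left( 1-t\right) ^{m}\,dt$ together with $\sum_{n\geq 0}z^{n}/\left( n+1\right) !=\left( e^{z}-1\right) /z$, and interchanging the (uniformly convergent on $\left[ 0,1\right] $) sum with the integral, I would arrive at
\begin{equation*}
{}_{2}F_{2}\left( \left. \begin{array}{c} 1,1 \\ 2,2+m \end{array}\right\vert x\right) =\frac{m+1}{x}\int_{0}^{1}\frac{\left( 1-t\right) ^{m}\left( e^{xt}-1\right) }{t}\,dt .
\end{equation*}

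Then I would expand $\left( 1-t\right) ^{m}=\sum_{k=0}^{m}\binom{m}{k}\left( -t\right) ^{k}$ and treat the resulting integrals separately. The $k=0$ term is $\int_{0}^{1}\left( e^{xt}-1\right) /t\,dt$, which after the substitution $u=xt$ equals $-\mathrm{Ein}\left( -x\right) $ by the integral definition $\mathrm{Ein}\left( z\right) =\int_{0}^{z}\left( 1-e^{-s}\right) /s\,ds$. For $k\geq 1$ one has $\int_{0}^{1}t^{k-1}\left( e^{xt}-1\right) \,dt=\int_{0}^{1}t^{k-1}e^{xt}\,dt-1/k$, and the substitution $u=xt$ reduces the first piece to $x^{-k}\int_{0}^{x}u^{k-1}e^{u}\,du$; comparing with the definition (\ref{gamma_def}) of the lower incomplete gamma function (replace $u$ by $-u$) gives $\int_{0}^{x}u^{k-1}e^{u}\,du=\left( -1\right) ^{k}\gamma \left( k,-x\right) $. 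Collecting the pieces, the factors $\left( -1\right) ^{k}\left( -1\right) ^{k}$ cancel on the incomplete-gamma terms, which reproduce $\sum_{k=1}^{m}\binom{m}{k}x^{-k}\gamma \left( k,-x\right) $, while the leftover $-\sum_{k=1}^{m}\binom{m}{k}\left( -1\right) ^{k}/k$ must be identified with $H_{m}$.

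The one step that is not pure bookkeeping — the main obstacle — is this binomial–harmonic identity $\sum_{k=1}^{m}\binom{m}{k}\left( -1\right) ^{k-1}/k=H_{m}$; I would dispose of it in one line by writing $\sum_{k=1}^{m}\binom{m}{k}\left( -1\right) ^{k-1}t^{k-1}=\left( 1-\left( 1-t\right) ^{m}\right) /t$ and integrating over $t\in \left[ 0,1\right] $, the right-hand side becoming $\int_{0}^{1}\left( 1+s+\cdots +s^{m-1}\right) \,ds=H_{m}$ after $s=1-t$. Assembling everything gives exactly (\ref{2F2_reduction}). An alternative route is induction on $m$, integrating by parts to obtain a recurrence relating ${}_{2}F_{2}\left( 1,1;2,2+m;x\right) $ to ${}_{2}F_{2}\left( 1,1;2,1+m;x\right) $ with base case $m=0$, but the direct integral evaluation above is shorter. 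Finally, although the computation is cleanest for real $x>0$, both sides of (\ref{2F2_reduction}) are entire in $x$ — the apparent pole of $x^{-k}\gamma \left( k,-x\right) $ at the origin is removable, since $\gamma \left( k,-x\right) $ vanishes there to order $k$ — so the identity holds for all complex $x$ by analytic continuation.
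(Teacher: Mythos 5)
Your proof is correct, and it reaches the same pivotal object as the paper --- the integral representation $\frac{m+1}{x}\int_{0}^{1}(1-t)^{m}\left(e^{xt}-1\right)t^{-1}\,dt$ --- but by a different route. The paper introduces $R_{m}(x)=\frac{1}{m!}\,_{2}F_{2}(1,1;2,1+m;x)$, verifies the differentiation identity $\frac{d}{dx}[x^{m}R_{m}(x)]=x^{m-1}R_{m-1}(x)$, iterates it down to $R_{0}(x)=(e^{x}-1)/x$, and then inverts via Cauchy's repeated-integral formula to get $\int_{0}^{x}(x-t)^{m}\frac{e^{t}-1}{t}\,dt$, which is your integral after the substitution $t\mapsto xt$. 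You instead obtain it in one pass by writing $1/(n+m+1)!$ as a Beta integral and summing the series under the integral sign; this is arguably more direct and avoids the induction, at the cost of having to justify the interchange of sum and integral (which is immediate by uniform convergence, as you note). From the integral onward the two arguments coincide: binomial expansion, identification of the $k=0$ piece with $-\mathrm{Ein}(-x)$, reduction of the $k\geq 1$ pieces to $(-1)^{k}\gamma(k,-x)-x^{k}/k$, and the alternating binomial--harmonic identity $\sum_{k=1}^{m}\binom{m}{k}(-1)^{k-1}/k=H_{m}$, which the paper simply cites from Gradshteyn--Ryzhik and which you prove in one line by integrating $\bigl(1-(1-t)^{m}\bigr)/t$. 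Your closing analyticity remark is sound but slightly incomplete as stated: besides the removability of the poles of $x^{-k}\gamma(k,-x)$, one also needs the bracket $\{H_{m}-\mathrm{Ein}(-x)+\sum_{k}\binom{m}{k}x^{-k}\gamma(k,-x)\}$ to vanish at $x=0$ so that the overall prefactor $(m+1)/x$ produces no pole; this does hold, precisely because $x^{-k}\gamma(k,-x)\to(-1)^{k}/k$ and $\sum_{k=1}^{m}\binom{m}{k}(-1)^{k}/k=-H_{m}$.
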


\begin{proof}
Consider the function%
\begin{equation*}
R_{m}\left( x\right) =\frac{1}{m!}\,_{2}F_{2}\left( \left.
\begin{array}{c}
1,1 \\
2,1+m%
\end{array}%
\right\vert x\right) =\sum_{k=0}^{\infty }\frac{x^{k}}{\left( m+k\right)
!\left( k+1\right) },
\end{equation*}%
thus%
\begin{equation*}
\frac{d}{dx}\left[ x^{m}R_{m}\left( x\right) \right] =x^{m-1}R_{m-1}\left(
x\right) ,
\end{equation*}%
and by induction%
\begin{equation*}
\frac{d^{m}}{dx^{m}}\left[ x^{m}R_{m}\left( x\right) \right] =R_{0}\left(
x\right) =\frac{1}{x}\sum_{k=0}^{\infty }\frac{x^{k+1}}{\left( k+1\right) !}=%
\frac{e^{x}-1}{x}.
\end{equation*}%
Now, apply the repeated integral formula \cite[Eqn. 1.4.31]{olver2010nist}%
\begin{equation*}
f^{\left( -n\right) }\left( x\right) =\frac{1}{\left( n-1\right) !}%
\int_{0}^{x}\left( x-t\right) ^{n-1}f\left( t\right) dt,
\end{equation*}%
to obtain%
\begin{eqnarray}
R_{m+1}\left( x\right) &=&\frac{1}{\left( m+1\right) !}\,_{2}F_{2}\left(
\left.
\begin{array}{c}
1,1 \\
2,2+m%
\end{array}%
\right\vert x\right)  \notag \\
&=&\frac{x^{-m-1}}{m!}\int_{0}^{x}\left( x-t\right) ^{m}\left( \frac{e^{t}-1%
}{t}\right) dt.  \label{Repeated_integral_binomial}
\end{eqnarray}%
Use the binomial theorem to expand (\ref{Repeated_integral_binomial})\ as%
\begin{eqnarray}
&&_{2}F_{2}\left( \left.
\begin{array}{c}
1,1 \\
2,2+m%
\end{array}%
\right\vert x\right)  \label{2F2_a} \\
&=&\frac{m+1}{x}\left\{ \int_{0}^{x}\frac{e^{t}-1}{t}\,dt+\sum_{k=1}^{m}%
\binom{m}{k}x^{-k}\left( -1\right) ^{k}\int_{0}^{x}t^{k-1}\left(
e^{t}-1\right) dt\right\} .  \notag
\end{eqnarray}%
According to \cite[Eqn. 6.2.3]{olver2010nist}, we have%
\begin{equation}
\int_{0}^{x}\frac{e^{t}-1}{t}\,dt=-\mathrm{Ein}\left( -x\right) .
\label{Ein(x)}
\end{equation}%
Also, taking into account the definition of the lower incomplete gamma
function \cite[Eqn. 45:3:1]{oldham2009atlas}, we calculate for $k=1,2,\ldots
$%
\begin{equation}
\int_{0}^{x}t^{k-1}\left( e^{t}-1\right) dt=\left( -1\right) ^{k}\gamma
\left( k,-x\right) -\frac{x^{k}}{k}.  \label{Integral_gamma}
\end{equation}%
Therefore, substituting (\ref{Ein(x)}) and (\ref{Integral_gamma})\ in (\ref%
{2F2_a}), we have
\begin{equation*}
_{2}F_{2}\left( \left.
\begin{array}{c}
1,1 \\
2,2+m%
\end{array}%
\right\vert x\right) =\frac{m+1}{x}\left\{ -\mathrm{Ein}\left( -x\right)
+\sum_{k=1}^{m}\binom{m}{k}\left[ x^{-k}\gamma \left( k,-x\right) +\frac{%
\left( -1\right) ^{k+1}}{k}\right] \right\} .
\end{equation*}%
Finally, consider the formula \cite[Eqn. 0.155.4]{gradstein2015table}%
\begin{equation*}
\sum_{k=1}^{m}\binom{m}{k}\frac{\left( -1\right) ^{k+1}}{k}=H_{m},
\end{equation*}%
to arrive at (\ref{2F2_reduction}), as we wanted to prove
\end{proof}

\section{Reduction formulas for the Whittaker function $\mathrm{W}_{\protect%
\kappa ,\protect\mu }\left( x\right) $\label{Appendix_Reduction_Whittaker}}

For convenience of the readers, reduction formulas for the Whittaker
function $\mathrm{W}_{\kappa ,\mu }\left( x\right) $ are presented in their
explicit form in Table \ref{Table_5}.

\begin{center}
%TCIMACRO{\TeXButton{B}{\begin{table}[htbp] \centering}}%
%BeginExpansion
\begin{table}[htbp] \centering%
%EndExpansion
\caption{Whittaker function $W_{\kappa,\mu}(x)$  for particular values of $\kappa$ and
$\mu$.}%
\begin{tabular}{|c|c|c|}
\hline
$\kappa $ & $\mu $ & $\mathrm{W}_{\kappa ,\mu }\left( x\right) $ \\
\hline\hline
$-\frac{1}{4}$ & $\pm \frac{1}{4}$ & $\sqrt{\pi }e^{x/2}x^{1/4}\mathrm{erfc}%
\left( \sqrt{x}\right) $ \\ \hline
$-\frac{1}{2}$ & $\pm \frac{1}{2}$ & $\frac{x}{\sqrt{\pi }}\left[
K_{1}\left( \frac{x}{2}\right) -K_{0}\left( \frac{x}{2}\right) \right] $ \\
\hline
$-\frac{1}{2}$ & $\pm \frac{1}{6}$ & $3\frac{x}{\sqrt{\pi }}\left[
K_{2/3}\left( \frac{x}{2}\right) -K_{1/3}\left( \frac{x}{2}\right) \right] $
\\ \hline
$-\frac{1}{2}$ & $\pm 1$ & $x^{-1/2}e^{-x/2}$ \\ \hline
$0$ & $0$ & $\sqrt{\frac{x}{\pi }}\,K_{0}\left( \frac{x}{2}\right) $ \\
\hline
$0$ & $\pm \frac{1}{2}$ & $e^{-x/2}$ \\ \hline
$0$ & $\pm 1$ & $\sqrt{\frac{x}{\pi }}\,K_{1}\left( \frac{x}{2}\right) $ \\
\hline
$0$ & $\pm \frac{3}{2}$ & $x^{-1}e^{-x/2}\left( x+2\right) $ \\ \hline
$0$ & $\pm \frac{5}{2}$ & $x^{-2}e^{-x/2}\left( x^{2}+6\,x+12\right) $ \\
\hline
$\frac{1}{4}$ & $\pm \frac{1}{4}$ & $x^{1/4}e^{-x/2}$ \\ \hline
$\frac{1}{2}$ & $\pm \frac{1}{6}$ & $\frac{x}{2\sqrt{\pi }}\left[
K_{1/3}\left( \frac{x}{2}\right) +K_{2/3}\left( \frac{x}{2}\right) \right] $
\\ \hline
$\frac{1}{2}$ & $\pm \frac{1}{4}$ & $\frac{x}{2\sqrt{\pi }}\left[
K_{1/4}\left( \frac{x}{2}\right) +K_{3/4}\left( \frac{x}{2}\right) \right] $
\\ \hline
$\frac{1}{2}$ & $\pm \frac{1}{2}$ & $\frac{x}{2\sqrt{\pi }}\left[
K_{0}\left( \frac{x}{2}\right) +K_{1}\left( \frac{x}{2}\right) \right] $ \\
\hline
$\frac{1}{2}$ & $\pm 1$ & $\,x^{-1/2}e^{-x/2}\,\left( x+1\right) $ \\ \hline
$\frac{1}{2}$ & $\pm 2$ & $x^{-3/2}e^{-x/2}\,\left( x^{2}+4\,x+6\right) $ \\
\hline
$1$ & $\pm \frac{3}{2}$ & $x^{-1}e^{-x/2}\left( x^{2}+2\,x+2\right) $ \\
\hline
$1$ & $\pm 1$ & $\frac{1}{2}\sqrt{\frac{x}{\pi }}\left[ x\,K_{0}\left( \frac{%
x}{2}\right) +\left( x+1\right) K_{1}\left( \frac{x}{2}\right) \right] $ \\
\hline
$1$ & $\pm 2$ & $\frac{1}{2\sqrt{\pi \,x}}\left[ x\left( x+3\right)
K_{0}\left( \frac{x}{2}\right) +\left( x^{2}+4\,x+12\right) K_{1}\left(
\frac{x}{2}\right) \right] $ \\ \hline
$2$ & $\pm 2$ & $\frac{1}{4\sqrt{\pi \,x}}\left[ x\left(
2\,x^{2}+2x+3\right) K_{0}\left( \frac{x}{2}\right) +2\left(
x^{3}+2\,x^{2}+4\,x+6\right) K_{1}\left( \frac{x}{2}\right) \right] $ \\
\hline
\end{tabular}%
\label{Table_5}%
%TCIMACRO{\TeXButton{E}{\end{table}}}%
%BeginExpansion
\end{table}%
%EndExpansion
\end{center}

\pagebreak

\bibliographystyle{plain}

\begin{thebibliography}{10}

\bibitem{abad2003successive}
J~Abad and J~Sesma.
\newblock Successive derivatives of {W}hittaker functions with respect to the
  first parameter.
\newblock {\em Computer Physics Communications}, 156(1):13--21, 2003.

\bibitem{ancarani2010derivatives}
LU~Ancarani and G~Gasaneo.
\newblock Derivatives of any order of the hypergeometric function pfq(a1,…,
  ap; b1,…, bq; z) with respect to the parameters ai and bi.
\newblock {\em Journal of Physics A: Mathematical and Theoretical},
  43(8):085210, 2010.

\bibitem{apelblat2020bessel}
A~Apelblat.
\newblock {\em Bessel and Related Functions: Mathematical Operations with
  Respect to the Order}.
\newblock De Gruyter, Berlin, 2020.

\bibitem{apelblat2021integral}
A~Apelblat and JL~Gonz{\'a}lez-Santander.
\newblock The {I}ntegral {M}ittag-{L}effler, {W}hittaker and {W}right
  functions.
\newblock {\em Mathematics}, 9(24):3255, 2021.

\bibitem{apelblat1985integral}
A~Apelblat and N~Kravitsky.
\newblock Integral representations of derivatives and integrals with respect to
  the order of the {B}essel functions $j_{\nu}(t)$, $i_{\nu}(t)$, the {A}nger
  function $\mathbf{J}_{\nu}(t)$ and the integral {B}essel function
  $ji_{\nu}(t)$.
\newblock {\em IMA journal of applied mathematics}, 34(2):187--210, 1985.

\bibitem{becker2009infinite}
PA~Becker.
\newblock Infinite integrals of {W}hittaker and {B}essel functions with respect
  to their indices.
\newblock {\em Journal of mathematical physics}, 50(12):123515, 2009.

\bibitem{brychkov2016higher}
YA~Brychkov.
\newblock Higher derivatives of the {B}essel functions with respect to the
  order.
\newblock {\em Integral Transforms and Special Functions}, 27(7):566--577,
  2016.

\bibitem{brychkov2005derivatives}
YA~Brychkov and KO~Geddes.
\newblock On the derivatives of the {B}essel and {S}truve functions with
  respect to the order.
\newblock {\em Integral Transforms and Special Functions}, 16(3):187--198,
  2005.

\bibitem{buchholz1969confluent}
H~Buchholz.
\newblock {\em The Confluent Hypergeometric Function}.
\newblock Springer-Verlag, New York, 1969.

\bibitem{buschman1974finite}
RG~Buschman.
\newblock Finite sum representations for partial derivatives of special
  functions with respect to parameters.
\newblock {\em Mathematics of Computation}, 28(127):817--824, 1974.

\bibitem{erdelyi1953bateman}
Arthur Erd{\'e}lyi, W~Magnus, F~Oberhettinger, and FG~Tricomi.
\newblock {\em Higher transcendental functions}, volume~1.
\newblock McGraw-Hill, New York, 1953.

\bibitem{geddes1990evaluation}
KO~Geddes, ML~Glasser, RA. Moore, and TC~Scott.
\newblock Evaluation of classes of definite integrals involving elementary
  functions via differentiation of special functions.
\newblock {\em Applicable Algebra in Engineering, Communication and Computing},
  1(2):149--165, 1990.

\bibitem{gonzalez2018closed}
JL~Gonz{\'a}lez-Santander.
\newblock Closed-form expressions for derivatives of bessel functions with
  respect to the order.
\newblock {\em Journal of Mathematical Analysis and Applications},
  466(1):1060--1081, 2018.

\bibitem{gradstein2015table}
IS~Gradstein and IM~Ryzhik.
\newblock {\em Table of Integrals, Series, and Products}.
\newblock Academic Press, 8 edition, 2015.

\bibitem{kolbig1987integral}
KS~K{\"o}lbig.
\newblock On the integral $\int_{0}^{1}x^{\nu -1}\left( 1-x\right)^{-\lambda
  }\left( \ln x\right)^{m}\,dx$.
\newblock {\em Journal of Computational and Applied Mathematics},
  18(3):369--394, 1987.

\bibitem{lagarias2009schrodinger}
JC~Lagarias.
\newblock The schr{\"o}dinger operator with morse potential on the right
  half-line.
\newblock {\em Communications in Number Theory and Physics}, 3(2):323--361,
  2009.

\bibitem{laurenzi1973derivatives}
BJ~Laurenzi.
\newblock Derivatives of {W}hittaker functions $w_{\kappa, 1/2}$ and
  $m_{\kappa, 1/2}$ with respect to order $\kappa$.
\newblock {\em Mathematics of Computation}, 27(121):129--132, 1973.

\bibitem{lebedev1965special}
NN~Lebedev.
\newblock {\em Special functions and their applications}.
\newblock Prentice-Hall INC, Englewood CIiffs, NJ, 1965.

\bibitem{magnus2013formulas}
W~Magnus, F~Oberhettinger, and RP~Soni.
\newblock {\em Formulas and theorems for the special functions of mathematical
  physics}, volume~52.
\newblock Springer Science \& Business Media, 2013.

\bibitem{mainardi2022wright}
Francesco Mainardi, Richard~B Paris, and Armando Consiglio.
\newblock Wright functions of the second kind and whittaker functions.
\newblock {\em Fractional Calculus and Applied Analysis}, pages 1--18, 2022.

\bibitem{oldham2009atlas}
KB~Oldham, J~Myland, and J~Spanier.
\newblock {\em An atlas of functions: with equator, the atlas function
  calculator}.
\newblock Springer, New York, 2009.

\bibitem{olver2010nist}
Frank~WJ Olver, DW~Lozier, RF~Boisvert, and CW~Clark.
\newblock {\em NIST {H}andbook of mathematical functions}.
\newblock Cambridge University Press, 2010.

\bibitem{omair2022family}
MA~Omair, YA~Tashkandy, S~Askar, and AA~Alzaid.
\newblock Family of distributions derived from {W}hittaker function.
\newblock {\em Mathematics}, 10(7):1058, 2022.

\bibitem{prudnikov1986integrals}
AP~Prudnikov, YA~Brychkov, and OI~Marichev.
\newblock {\em Integrals and series: more special functions}, volume~3.
\newblock CRC press, New York, 1986.

\bibitem{slater1960confluent}
LJ~Slater.
\newblock {\em Confluent hypergeometric functions}.
\newblock Cambrigde University Press, Cambrigde, 1960.

\bibitem{sofotasios2018derivatives}
PC~Sofotasios and YA~Brychkov.
\newblock On derivatives of hypergeometric functions and classical polynomials
  with respect to parameters.
\newblock {\em Integral Transforms and Special Functions}, 29(11):852--865,
  2018.

\bibitem{whittaker1903expression}
ET~Whittaker.
\newblock An expression of certain known functions as generalized
  hypergeometric functions.
\newblock {\em Bulletin of the American Mathematical Society}, 10(3):125--134,
  1903.

\bibitem{whittaker1920course}
ET~Whittaker and GN~Watson.
\newblock {\em A course of modern analysis}.
\newblock Cabrigdge University Press, 4 edition, 1963.

\end{thebibliography}

\end{document}